\documentclass{article}
\usepackage{tikz}
\usetikzlibrary{arrows.meta,positioning,calc,fit,backgrounds,shapes.geometric,decorations.pathreplacing}
\usepackage{algorithm}
\usepackage{algpseudocode}
\usepackage{arxiv}
\usepackage{enumitem}
\usepackage[utf8]{inputenc} 
\usepackage[T1]{fontenc}    
\usepackage{hyperref}       
\usepackage{url}            
\usepackage{booktabs}       
\usepackage{amsfonts}       
\usepackage{nicefrac}       
\usepackage{microtype}      
\usepackage{lipsum}
\usepackage{fancyhdr}       
\usepackage{graphicx}       
\usepackage{amsmath}
\usepackage{lyu}
\usepackage{amsthm}
\usepackage{amssymb}
\usepackage{xcolor}
\usepackage{subcaption}
\usepackage{bbm}
\newtheorem{theorem}{Theorem}[section]

\newtheorem{lemma}[theorem]{Lemma}
\theoremstyle{definition}

\newtheorem{assumption}{Assumption}
\newtheorem{proposition}[theorem]{Proposition}
\newtheorem{remark}[theorem]{Remark}
\graphicspath{{}{../}{./}}
\title{
Multiscale Nudging: From Macroscopic Observations to Microscopic Dynamics
}

\author{
  Liyao Lyu\\
  Department of Mathematics \\
  University of California, Los Angeles \\
  Los Angeles, CA 90024, USA\\
  \texttt{lyuliyao@math.ucla.edu} \\
  \AND
   Xinyue Yu \\
  Department of Mathematics \\
  University of California, Los Angeles \\
  Los Angeles, CA 90024, USA\\
  \texttt{tracy@math.ucla.edu} \\
  \AND
   Hayden Schaeffer  \\
  Department of Mathematics \\
  University of California, Los Angeles \\
  Los Angeles, CA 90024, USA\\
  \texttt{hayden@math.ucla.edu} \\
}

\begin{document}
\maketitle

\begin{abstract}
    We introduce a measure-based nudging framework for assimilating macroscopic observations into microscopic mean-field particle dynamics. The central difficulty is a representation mismatch: the forecast is a labeled particle system, while the observations specify only a smoothed, permutation-invariant density. To address this mismatch, we define the forecast-observation discrepancy as a quadratic functional on probability measures after applying the same smoothing operator used by the observation process. The Wasserstein gradient of this functional induces a transport velocity on state space, which yields a particle-level correction without constructing particle-to-particle matching, linearizing the dynamics, or estimating ensemble covariances. For a fixed observation scale, we prove well-posedness of the assimilated McKean--Vlasov dynamics and propagation of chaos for the interacting particle approximation. Under exact smoothed observations and an observability condition at the kernel scale, we establish an \(L^2\)-stability estimate showing exponential decay up to a bias floor controlled by model misspecification. Numerical experiments on linear, bimodal, chaotic, kinetic, and collective-motion systems demonstrate that the method can recover macroscopic structure from incomplete density-level observations.
\end{abstract}

\keywords{mean field approximation\and interacting particle models \and data assimilation \and multiscale dynamics}

\maketitle

\section{Introduction}
Mean-field particle systems are widely used tools in modeling many natural and engineered systems with multiscale phenomena, which arise in fluid dynamics, neuroscience, materials science, and biological systems~\cite{jost2005dynamical,wiggins2003introduction}. These multiscale systems typically involve a large number of degrees of freedom, making direct analysis and simulation at the microscopic level prohibitively expensive. Introduced by ~\cite{kac1956foundations,mckean1966class}, mean-field theory has provided a way to connect microscopic interactions with macroscopic dynamics. The mean-field limit replaces the high-dimensional coupled dynamics of $N$ interacting agents with a single representative equation whose drift depends on the law of the process itself, a formulation known as the McKean-Vlasov stochastic differential equation. On the theoretical side, substantial progress has been made on well-posedness~\cite{de2020strong,huang2019distribution}, existence and uniqueness of solutions~\cite{mishura2020existence}, and propagation of chaos~\cite{sznitman2006topics,chaintron2021propagation,jabin2017mean}. On the applied side, mean-field models have found successful applications in areas ranging from mean-field control~\cite{buckdahn2017mean} and mean-field games~\cite{ding2022mean} to high-dimensional sampling~\cite{liu2017stein,carrillo2024fisher} and neural network training~\cite{rotskoff2022trainability}.

Although mean-field modeling has achieved broad success, several practical challenges remain. In many complex systems, the true mean-field dynamics are unknown, and thus a learned or approximated drift is needed. One approach is to approximate the governing system ~\cite{lu2019nonparametric,lu2021learning,lyu2026mvnn,chen2021data,du2022discovery,sharrock2021parameter,zhang2024bayesian, schaeffer2020extracting,liu2023random,lu2024learning, schaeffer2017learning, schaeffer2017sparse}, which can often introduce bias during forecasting. This issue is often observed in strongly nonlinear, heterogeneous, or transitional regimes, which are difficult to capture using simplified effective dynamics alone~\cite{sznitman2006topics,jabin2018quantitative,cattiaux2008probabilistic}. Even when the dominant mechanisms are reasonably well understood, uncertainty in initialization and unresolved interactions can accumulate over time, producing significant forecast drift. 

A natural way to reduce such forecast drift is data assimilation~\cite{lee2015multiscale,deng2025lemda,harlim2013test,deng2024particle,burov2021kernel}, which uses observational information to constrain the forecast and correct departures from the reference dynamics. In the mean-field problems considered here, however, the forecast and the observations are represented at different physical scales. The model evolves microscopic particles or agents, such as atoms, molecules, cells, or individual animals, whereas the available data often provide only macroscopic information. In image-based settings, for example, pixel intensities or spatial averages describe a coarse-grained density field rather than the positions and identities of individual particles~\cite{carrillo2021mean,paul2022microscopic}. Direct access to individual particle positions is therefore unavailable. This creates a cross-scale assimilation problem. The forecast contains a finite particle configuration, but the observation specifies only how mass is distributed at the resolution of the measurement device. Hence, the data do not determine a unique microscopic configuration, nor do they provide a canonical correspondence between observed mass and simulated particles. Even when particle locations can be extracted from measurements, the resulting tracks are often unlabeled, partially observed, or inconsistent across time. The assimilation procedure must therefore compare forecast and data at the observational scale, rather than through direct particle-to-particle matching.

Classical data assimilation has been a powerful tool, particularly in weather prediction and geophysical forecasting. The Ensemble Kalman Filter (EnKF)~\cite{evensen2003ensemble,evensen2009data} and its many variants approximate the Bayesian filtering distribution by propagating an ensemble of model states and updating them via a Kalman-type correction. In the present setting, however, the relevant observation is a permutation-invariant functional of the empirical distribution, whereas the standard EnKF update is built from covariances between labeled state coordinates. This can introduce an artificial dependence on particle indexing unless one imposes additional structure, such as reliable particle identities, a canonical ordering, or an explicit matching rule.  Variational methods such as 3D-Var and 4D-Var~\cite{lorenc1986analysis,courtier1994strategy} can incorporate nonlinear observation operators, but using coarse density observations would require optimizing over microscopic particle configurations whose macroscopic density matches the data. This lifting from density space back to particle space is highly nonunique. Particle filters~\cite{chopin2020introduction} provide a fully nonlinear Bayesian alternative, but they suffer from weight degeneracy in high-dimensional systems and do not by themselves resolve the permutation-invariant nature of density observations.  More recently, feedback-based approaches, commonly referred to as nudging or synchronization methods~\cite{azouani2014continuous,bessaih2015continuous}, have attracted much attention due to their simplicity and suitability to rigorous analysis~\cite{newey2025model,lu2025continuous,clark2018inferring,farhat2020data}. In nudging, a relaxation term continuously drives the model state toward consistency with observations, and convergence can often be established under verifiable spectral-gap or dissipativity conditions~\cite{albanez2016continuous,jolly2017data}.
Between these classical filters and nudging lies a family of transport- and coupling-based mean-field filters that move particles by a deterministic feedback velocity ~\cite{yang2011feedback,gregory2016multilevel,daum2010exact,pulido2019sequential,calvello2025ensemble,bao2024ensemble,bao2024score}, each of which assumes a likelihood or observation operator acting on the labeled state, rather than on the permutation-invariant density considered here.
However, in the present setting, where the state is represented by a finite ensemble of particles while the observations correspond to coarse-grained density fields, a fundamental issue remains: the discrepancy between model and data is naturally defined at the level of measures, rather than labeled states. This mismatch makes a standard $L^2$ residual unsuitable for comparing particle-based forecasts with coarse-grained density observations and may result in ill-posed or numerically unstable formulations.

This motivates a formulation of the assimilation problem at the level of measures. Rather than constructing a correction in a labeled \(N\)-particle state space, we define the forecast-observation discrepancy as a functional of the forecast law. More precisely, we regularize the empirical forecast measure using the same smoothing operator that defines the observed density, and then evaluate the discrepancy between the resulting coarse-grained forecast density and the data. Thus the feedback is driven by quantities that are invariant under permutations of the forecast particles and defined at the same resolution as the measurements. This avoids choosing artificial particle correspondences or lifting a coarse density observation back to a unique microscopic configuration. The central object is therefore an observation-scale, kernel-regularized misfit functional on probability measures.

Given this measure-level misfit, we seek a mechanism for translating the resulting correction to the particle level. We address this through the Wasserstein gradient-flow method ~\cite{jordan1998variational, ambrosio2005gradient,chen2023gradient,chen2023sampling}. This theory provides a variational formulation for a broad class of evolution equations including the Fokker--Planck equation, porous medium equations, and aggregation-diffusion models~\cite{carrillo2003kinetic,carrillo2019aggregation}. Within this method, the Wasserstein gradient of a regularized observation-misfit functional defines a transport velocity field on state space.  Evaluating this velocity field at the forecast particles yields a microscopic correction that is consistent with the macroscopic density-level discrepancy.  The resulting scheme compares forecast and data at the macroscopic density scale, yet implements the correction through microscopic particle dynamics without requiring particle labels or pointwise matching.

In this paper, we develop Multiscale Nudging as a measure-level method for assimilating coarse observations into microscopic mean-field dynamics. The forecast-observation mismatch is defined on probability measures after applying the same smoothing operator used in the observations, so the correction is invariant under particle relabeling and does not require lifting a coarse density to a unique particle configuration. We derive the feedback term as the Wasserstein steepest-descent direction of this misfit and evaluate the resulting velocity at particle locations. For fixed bandwidth and nudging strength, we prove well-posedness of the assimilated McKean-Vlasov dynamics and propagation of chaos for the particle system. With exact smoothed observations and a kernel-scale observability condition, we also obtain an $L^2$-error estimate with exponential decay up to a model-error-dependent bias floor. The method is tested on Gaussian, multimodal, chaotic, kinetic, and collective-motion examples. Figure~\ref{fig:schematic} summarizes the overall pipeline: the empirical forecast is coarse-grained and compared with the observation at the macroscopic scale, while the resulting correction is applied to individual particles at the microscopic scale through the Wasserstein gradient.
\begin{figure}[t]
\centering
\definecolor{macroC}{HTML}{0072B2}   
\definecolor{microC}{HTML}{D55E00}   
\definecolor{accentC}{HTML}{009E73}  
\resizebox{\linewidth}{!}{%
\begin{tikzpicture}[
  >=Stealth,
  font=\small,
]

\def\laneL{0}      
\def\laneR{15.2}   
\def\topB{6.1}     
\def\topT{10.0}    
\def\botB{0.0}     
\def\botT{3.1}     

\begin{scope}[on background layer]
  \fill[macroC!10,rounded corners=4pt] (\laneL,\topB) rectangle (\laneR,\topT);
  \fill[microC!10,rounded corners=4pt] (\laneL,\botB) rectangle (\laneR,\botT);
\end{scope}

\node[rotate=90, anchor=center, text=macroC, font=\bfseries\footnotesize, align=center]
  at (0.45,{(\topB+\topT)/2}) {MACROSCOPIC\\[1pt]\itshape\mdseries observation scale};
\node[rotate=90, anchor=center, text=microC, font=\bfseries\footnotesize, align=center]
  at (0.45,{(\botB+\botT)/2}) {MICROSCOPIC\\[1pt]\itshape\mdseries particle scale};


\node[draw=macroC, fill=white, rounded corners=2pt, line width=0.8pt,
      align=center, font=\scriptsize, text=macroC!75!black,
      minimum height=0.95cm, text width=2.22cm]
  (src) at (2.0,8.75) {observed data\\(\emph{true system}):\\image / pixels, or\\locations $\to$ KDE};

\coordinate (obsc) at (5.7,8.75);                      
\begin{scope}[shift={(obsc)},scale=0.74]
  \draw[macroC,line width=1pt] (-1.05,-0.6)
    .. controls (-0.55,-0.6) and (-0.5,0.78) .. (0,0.78)
    .. controls (0.5,0.78) and (0.55,-0.6) .. (1.05,-0.6);
  \draw[macroC!55,line width=0.6pt] (-1.15,-0.6) -- (1.15,-0.6);
\end{scope}
\node[text=macroC, font=\footnotesize] at (5.7,9.62) {observation $\mu^{\mathrm{obs}}$};
\draw[->,macroC,line width=0.9pt] (src.east) -- (4.82,8.75)
  node[midway, above, font=\scriptsize, text=macroC!75!black] {smooth $K_h$};

\coordinate (fcc) at (5.7,7.25);                       
\begin{scope}[shift={(fcc)},scale=0.74]
  \draw[macroC!85,line width=1pt,densely dashed] (-1.05,-0.6)
    .. controls (-0.4,-0.6) and (-0.7,0.55) .. (-0.1,0.55)
    .. controls (0.55,0.55) and (0.35,-0.6) .. (1.05,-0.6);
  \draw[macroC!55,line width=0.6pt] (-1.15,-0.6) -- (1.15,-0.6);
\end{scope}
\node[font=\scriptsize, text=black!60, align=right, anchor=east] at (4.35,7.25)
  {coarse-grained forecast\\[-1pt]\textcolor{macroC}{$K_h*\nu$}};

\node[draw=accentC, fill=accentC!10, rounded corners=3pt, line width=1pt,
      align=center, font=\scriptsize, text width=3.5cm, minimum height=1.7cm,
      text=black!80]
  (mis) at (10.6,8.0)
  {\textbf{compare at scale }$h$\\[2pt]
   residual $\;r = K_h*\nu - \mu^{\mathrm{obs}}$\\[2pt]
   misfit $\;J = \tfrac{1}{2}\!\int |r|^2$};
\draw[->,macroC,line width=0.9pt] (6.55,8.75) -- ([yshift=0.75cm]mis.west);  
\draw[->,macroC,line width=0.9pt] (6.55,7.25) -- ([yshift=-0.75cm]mis.west); 


\coordinate (cloud) at (5.7,1.6);
\begin{scope}[shift={(cloud)}]
  \foreach \p in {(-0.95,0.35),(-0.55,-0.25),(-0.2,0.55),(0.15,0.05),
                  (0.5,0.5),(0.0,-0.45),(0.65,-0.2),(-0.7,0.6),
                  (0.95,0.3),(-0.3,-0.05),(0.35,-0.55),(-0.95,-0.4)}{
    \fill[microC] \p circle (1.7pt);
  }
\end{scope}
\node[font=\scriptsize, text=black!60] at (2.6,1.95) {forecast (model) particles};
\node[text=microC, font=\footnotesize] at (2.6,1.15) {$\{\mathbf{Z}^i\}_{i=1}^{N}\sim\nu$};

\node[draw=microC, fill=white, rounded corners=3pt, line width=1pt,
      align=center, font=\scriptsize, text width=3.85cm, minimum height=1.3cm,
      text=black!80]
  (adv) at (11.45,1.6)
  {\textbf{forecast step (biased model)}\\[2pt]
   $\mathbf{Z} \leftarrow \mathbf{Z} + \Delta t\, \mathbf{b}_{\mathrm{model}}(\mathbf{Z},\nu)$\\[1pt]
   $\hphantom{\mathbf{Z} \leftarrow{}}+\,\Sigma\sqrt{\Delta t}\,\xi$};

\draw[->,microC,line width=1pt] (6.8,1.6) -- (adv.west)
  node[midway, above, font=\scriptsize, text=black!55] {advance};

\draw[->,microC,line width=1pt,rounded corners=7pt]
  (adv.south) -- (11.45,0.5) -- (5.7,0.5) -- (5.7,0.97);
\node[font=\scriptsize, text=microC!90!black, fill=microC!10, inner sep=2pt]
  at (8.55,0.5) {repeat each assimilation step};


\draw[->,macroC,line width=2pt] (5.7,2.45) -- (5.7,6.72)
  node[pos=0.52, anchor=east, fill=macroC!12, rounded corners=2pt, inner sep=2.5pt,
       text=macroC, font=\footnotesize, align=center]
  {\textbf{coarse-grain}\\[1pt]$\displaystyle K_h*\nu=\tfrac1N\!\sum_j K_h(\cdot-\mathbf{Z}^j)$\hspace{2pt}};

\draw[->,microC,line width=2pt]
  ($(mis.south)+(-0.4,0)$) .. controls +(0,-1.6) and +(1.4,1.1) ..
  ($(cloud)+(0.95,0.95)$)
  node[pos=0.52, fill=microC!12, rounded corners=2pt, inner sep=2.5pt,
       text=microC!90!black, font=\footnotesize, align=center]
  {\textbf{Wasserstein-gradient nudge}\\[1pt]
   $\mathbf{U}=-\lambda\,\nabla\!\big(\widetilde{K}_h*\nu - K_h*\mu^{\mathrm{obs}}\big)$};

\node[text=black!70, font=\scriptsize\itshape, align=center, anchor=center]
  at (13.4,4.7) {compare at the\\[-1pt]macro scale, correct\\[-1pt]at the micro scale};

\end{tikzpicture}%
}
\caption{\textbf{The Multiscale Nudging pipeline.} Biased-model forecast particles
$\{\mathbf{Z}^i\}\sim\nu$ (\emph{bottom band, microscopic}) are coarse-grained
with the kernel $K_h$ and compared with the observed density $\mu^{\mathrm{obs}}$
(\emph{top band, macroscopic}). The Wasserstein gradient of the resulting misfit
then nudges every particle back at the microscopic scale, and the step repeats.
The comparison lives at the observation scale $h$, but the correction acts on
individual particles---without matching, linearization, or ensemble covariances.}
\label{fig:schematic}
\end{figure}

\section{Method}
Let $(\Omega, \mathcal{F}, \mathbb{P})$ be a probability space with a filtration $(\mathcal{F}_t)_{t \ge 0}$. We consider the reference, or ground-truth, dynamics governed by the following McKean-Vlasov stochastic differential equation: 
\begin{equation}\label{equ:true_mean_field}
\intd \bX_t  = \bb_{\mathrm{true}} (\bX_t,\mu_t)\intd t + \Sigma\intd \bW_t, \quad \bX_0 \sim \mu_0,
\end{equation}
where $\mu_t= \mathrm{Law}(\bX_t)\in \mathcal P_2(\mathbb R^d)$ denotes the probability distribution of the true process, $\bX_t\in \mathbb R^d$ is the state of the reference particle, $\bW_t$ is a standard $d$-dimensional Brownian motion, $\bb_{\mathrm{true}}:\mathbb R^d\times \mathcal P_2(\mathbb R^d)\to \mathbb R^d$ is the true drift term, and $\Sigma  \in \mathbb R^{d\times d}$ is the diffusion coefficient (assumed constant for simplicity). The evolution of $\mu_t$ is given by 
\begin{equation}\label{equ:true_density}
\partial_t \mu_t
= -\nabla_{\mathbf{x}} \cdot \big( \mathbf{b}_{\mathrm{true}}(\mathbf{x},\mu_t)\,\mu_t \big)+\frac{1}{2}\nabla_{\mathbf{x}} \cdot \big( \Sigma \Sigma^\top \nabla_{\mathbf{x}} \mu_t \big).
\end{equation}
Equation \eqref{equ:true_mean_field} can be viewed as the mean-field approximation of an interacting particle system 
\[
\intd \bX_t^i  = \bb(\bX_t^1,\cdots, \bX_t^N)\intd t  + \Sigma \intd \bW_t^i, \qquad i=1,\ldots,N,
\]
with $N\to \infty$, under appropriate assumptions ensuring propagation of chaos \cite{chaintron2022propagation,chaintron2021propagation}. In practice, the true drift $\bb_{\mathrm{true}}$ is typically unknown. A considerable body of recent work has focused on learning interaction laws from data~\cite{lu2019nonparametric,lu2021learning,lyu2026mvnn}. The learned interaction model induces an approximate drift $\bb_{\mathrm{model}}$, which generally differs from the true drift with residual
\[
\bR(\bx,\mu) := \bb_{\mathrm{model}}(\bx,\mu) -  \bb_{\mathrm{true}}(\bx,\mu).
\]
The corresponding approximate dynamics are given by
\[
\intd \bY_t = \bb_{\mathrm{model}}(\bY_t,\mu^{\mathrm{model}}_t)\intd t + \Sigma \intd \bW_t,
\]
where $\mu^{\mathrm{model}}_t = \mathrm{Law}(\bY_t)$, or at the particle level,
\begin{equation}\label{equ:model}
\intd \bY^{i,N}_t = \bb_{\mathrm{model}}(\bY^{i,N}_t,\mu^{\mathrm{model},N}_t)\intd t + \Sigma \intd \bW^{i,N}_t,
\end{equation}
where $\mu^{\mathrm{model},N} = \frac{1}{N}\sum_{i=1}^N \delta_{\bY_i}$. 
Due to initialization error and model misspecification, the discrepancy between the true and approximate systems tends to accumulate over time. 

Data assimilation is one strategy to reduce this drift by continuously incorporating observational information into the model evolution. 
However, in the mean-field setting, one difficulty is that the representative particle $\bX_t$ is not itself a physical observable, and even in the interacting particle system $\{\bX_t^i\}_{i=1}^N$, individual trajectories are rarely accessible. In many practical applications, the available measurements take the form of aggregate or image-based data, such as density fields reconstructed from pixel observations, from which one cannot establish a correspondence between observed positions and particle indices. For example, in collective motion experiments such as \cite{ballerini2008interaction, cavagna2010scale, zhang2010collective}, the accessible quantity is a coarse-grained spatial density reconstructed from imaging data, rather than the full microscopic state of each individual. In geophysical forecasting \cite{evensen2003ensemble}, satellite observations similarly provide smoothed integrals of the atmospheric state distribution. In all these settings, the observational quantity takes the form of a smoothed density. For simplicity, we the following two forms of smoothed observations in this paper. Let $K_h (\bx) = h^{-d}K(\bx/h)$ be a smoothing kernel with bandwidth $h>0$, where $K:\mathbb R^d \to \mathbb R_+$. Concretely, we consider two cases. In the first case, one has access to pixel-level image data, from which the observed density is directly reconstructed through a coarse-graining operator
\[
\mu_t^{\mathrm{obs}}(\bz) = \mathcal{O}_h(\mu_t)(\bz) = \int K_h(\bz - \bx)\,\mu_t(\mathrm{d}\bx).
\]
In the second case, only a partial and unlabeled set of particle locations $\{\bX_t^{\mathrm{obs},k,M}\}_{k=1}^M$ is available, without consistent index correspondence across time, and the observed density is approximated by the kernel density estimator
\[
\mu_t^{\mathrm{obs}}(\bz) = \frac{1}{M}\sum_{k=1}^{M} K_h(\bz - \bX_t^{\mathrm{obs},k,M}).
\]
To reduce discrepancies caused by imperfect initialization and model error, we propose to augment the approximate mean-field dynamics with a nudging term, in the spirit of continuous data assimilation \cite{bessaih2015continuous}. This term drives the evolving law $\nu_t$ toward consistency with the available observations.

Assume that the smoothing kernel is even, i.e. \(K_h(\bx)=K_h(-\bx)\). 
For a forecast law \(\nu\) and a fixed observation \(\mu^{\mathrm{obs}}\), define the observation-scale residual
\[
r_{\nu,h} := K_h*\nu-\mu^{\mathrm{obs}} .
\]
We measure the forecast--observation mismatch by
\[
J_{\mathrm{nud},h}(\nu,\mu^{\mathrm{obs}})
=
\frac12\int_{\mathbb R^d}
\left|r_{\nu,h}(\bx)\right|^2\,\intd\bx .
\]
Since convolution with an even kernel is self-adjoint in \(L^2\), differentiating \(J_{\mathrm{nud},h}\) along signed measure perturbations gives the first variation
\[
\phi_{\nu,h}(\bz)
:=
\frac{\delta J_{\mathrm{nud},h}}{\delta \nu}(\bz)
=
(K_h*r_{\nu,h})(\bz)
=
\bigl(\widetilde K_h*\nu-K_h*\mu^{\mathrm{obs}}\bigr)(\bz),
\qquad
\widetilde K_h:=K_h*K_h .
\]
We define the nudging correction as the Wasserstein steepest-descent direction of \(J_{\mathrm{nud},h}\), with \(\mu^{\mathrm{obs}}\) held fixed during the correction step. Equivalently, for an artificial nudging time step \(\tau>0\), the corrected law is formally given by the minimizing-movement problem
\[
\nu^{+}
\in
\underset{\rho\in\mathcal P_2(\mathbb R^d)}{\operatorname{argmin}}
\left\{
\frac{1}{2\tau}W_2^2(\rho,\nu)
+
\lambda J_{\mathrm{nud},h}(\rho,\mu^{\mathrm{obs}})
\right\}.
\]
In infinitesimal form, the corresponding tangent-space problem is
\[
\bu_{\mathrm{nud},h}
=
\underset{\bu\in T_\nu\mathcal P_2}{\operatorname{argmin}}
\left\{
\int_{\mathbb R^d}
\nabla\phi_{\nu,h}(\bz)\cdot \bu(\bz)\,\intd\nu(\bz)
+
\frac{1}{2\lambda}
\int_{\mathbb R^d}
|\bu(\bz)|^2\,\intd\nu(\bz)
\right\}.
\]
The Euler--Lagrange condition for this quadratic problem yields
\[
\bu_{\mathrm{nud},h}(\bz,\nu,\mu^{\mathrm{obs}})
=
-\lambda\nabla\phi_{\nu,h}(\bz).
\]
Therefore,
\[
\bu_{\mathrm{nud},h}(\bz,\nu,\mu^{\mathrm{obs}})
=
-\lambda\nabla
\bigl(
\widetilde K_h*\nu
-
K_h*\mu^{\mathrm{obs}}
\bigr)(\bz).
\]
Substituting into~\eqref{equ:model}, we obtain the regularized assimilated dynamics
\begin{equation}\label{equ:assimilated_regularized}
\intd\bZ_t
= \bb_{\mathrm{model}}(\bZ_t,\nu_t)\,\intd t -\lambda  \nabla \left( \tilde K_h * \nu - K_h * \mu^{\mathrm{obs}} \right)(\bZ_t)\,\intd t
+ \Sigma \intd \bW_t.
\end{equation}
Formally, the Law of $\bZ_t$, denoted by $\nu_t$, satisfies the following nonlinear Fokker-Planck equation:
\begin{equation}\label{equ:fpk_regularized}
\partial_t \nu_t
=
-\nabla\cdot(\nu_t b_{\rm model}(\cdot,\nu_t))
+
\nabla\cdot(A\nabla\nu_t)
+
\lambda\nabla\cdot
\left[
\nu_t\nabla(K_h*r_t)
\right],
\qquad
A = \frac12\Sigma\Sigma^\top.
\end{equation}
\begin{remark}
    If we assume that $\bb_{\mathrm{model}}$ and $\Sigma$ take the following form $$ \bb_{\mathrm{model}}(\bz,\nu) =  -\nabla \left( \frac{\delta F_{\mathrm{model}}}{\delta \nu}\right)(\bz), \quad \Sigma = \sqrt{2\beta} \mathbf{I}, $$ for some functional $F_{\model}(\mu)$, then the assimilated dynamics \eqref{equ:assimilated_regularized} can be interpreted as a Wasserstein gradient flow of the free energy 
\begin{equation}\label{equ:free_energy}
\mathcal F_t (\nu) = F_{\mathrm{model}}(\nu) + \lambda J_{\mathrm{nud},h}(\nu,\mu_t^{\mathrm{obs}}) + \beta \mathrm{Ent} (\nu),
\end{equation}
where $\mathrm{Ent} (\nu) = \int \nu\log(\nu)\intd \bz$.
\end{remark}

\begin{algorithm}[H]
\caption{Multiscale Nudging: particle implementation from coarse density observations}
\label{alg:multiscale-nudge}
\begin{algorithmic}[1]
\raggedright
\Require Particles $\{\bZ_0^i\}_{i=1}^N$, model drift $\bbm$, diffusion $\Sigma$, step $\Delta t$.
\Statex \hphantom{\textbf{Input:} }Grid $\mathcal G=\{\bx_q\}_{q=1}^G$, weights $\{w_q\}$, kernel $K_h$, obs.\ times $\mathcal T_{\mathrm{obs}}$.
\Statex \hphantom{\textbf{Input:} }Nudging strength $\lambda$, substeps $L_{\mathrm{nud}}$, substep size $\Delta\tau=\Delta t/L_{\mathrm{nud}}$.
\Ensure Assimilated trajectories $\{\bZ_n^i\}_{i=1}^N$.
\Statex\hrulefill
\For{$n=0,\ldots,N_t-1$}
  \State Empirical forecast law $\nu_n^N=\frac1N\sum_{i=1}^N\delta_{\bZ_n^i}$.
  \State \textbf{Forecast.} Push each particle with $\boldsymbol\xi_n^i\sim\mathcal N(0,I_d)$:
  \algeq{\widetilde{\bZ}_{n+1}^i=\bZ_n^i+\Delta t\,\bbm(\bZ_n^i,\nu_n^N)+\Sigma\sqrt{\Delta t}\,\boldsymbol\xi_n^i.}
  \State \textbf{Observe.} Receive coarse density $y_{n+1,q}\approx\mub_{t_{n+1}}(\bx_q)$ on $\mathcal G$.
  \State Initialize $\bZ_{n+1}^{i,(0)}\gets\widetilde{\bZ}_{n+1}^i$.
  \For{$\ell=0,\ldots,L_{\mathrm{nud}}-1$}
    \State Smoothed density $\rho_q^{(\ell)}=\frac1N\sum_{j=1}^N K_h(\bx_q-\bZ_{n+1}^{j,(\ell)})$ for all $q$.
    \State Residual $r_q^{(\ell)}=\rho_q^{(\ell)}-y_{n+1,q}$.
    \State \textbf{Nudge.} Wasserstein-gradient velocity at each particle:
    \algeq{\bU_i^{(\ell)}=-\lambda\sum_{q=1}^G w_q\,\nabla_{\bz}K_h(\bz-\bx_q)\big|_{\bz=\bZ_{n+1}^{i,(\ell)}}\,r_q^{(\ell)}.}
    \State Substep $\bZ_{n+1}^{i,(\ell+1)}\gets\bZ_{n+1}^{i,(\ell)}+\Delta\tau\,\bU_i^{(\ell)}$.
  \EndFor
  \State Commit $\bZ_{n+1}^i\gets\bZ_{n+1}^{i,(L_{\mathrm{nud}})}$.
\EndFor
\end{algorithmic}
\end{algorithm}
\section{Theoretical Analysis}
This section establishes three properties of the Multiscale Nudging scheme: the
kernel-regularized feedback yields well-posed dynamics, the finite-particle
implementation converges to the mean-field model, and the feedback provably
reduces the forecast error. The analysis begins with Lemma~\ref{lemma:kernel}, which shows the two key properties of the smoothed
kernel $\tilde K_h$: a globally Lipschitz gradient, supplying the regularity for
well-posedness, and the $H^1$ approximation $\tilde K_h * v \to v$ as
$h\downarrow 0$, quantifying the accuracy of observing only at scale $h$.
Proposition~\ref{prop:well_possedness} then gives a unique strong solution to the
assimilated McKean-Vlasov dynamics~\eqref{equ:assimilated_regularized}, and
Proposition~\ref{equ:propagation_of_chaos} establishes propagation of chaos,
justifying the particle discretization of Algorithm~\ref{alg:multiscale-nudge}.
With the strict positivity from Proposition~\ref{prop:positivity}, our main
result, Theorem~\ref{thm:main_kdenudge}, shows that under a kernel-scale
observability condition and a lower bound on the nudging strength, the $L^2$
error decays exponentially to a floor of order $\Delta^2$, vanishing when the
model is exact.
\begin{lemma}[Periodized Gaussian kernel]\label{lemma:kernel}
Let \(\Omega=\mathbb T^d=\mathbb R^d/\mathbb Z^d\). For \(h>0\), define the periodized Gaussian kernel
\[
K_h^{\mathbb T}(\bx)
=
\sum_{\bm m\in\mathbb Z^d}
h^{-d}\pi^{-d/2}
\exp\left(-\frac{\|\bx+\bm m\|^2}{h^2}\right),
\qquad \bx\in\mathbb T^d .
\]
Let
\[
\widetilde K_h
=
K_h^{\mathbb T}*_{\mathbb T}K_h^{\mathbb T},
\]
where \(*_{\mathbb T}\) denotes convolution on the torus. Then:
\begin{enumerate}
    \item \(K_h^{ \mathbb T}\) and \(\widetilde K_h\) belong to
    \(C^\infty(\mathbb T^d)\). In particular, for each fixed \(h>0\),
    \(\widetilde K_h\) is globally Lipschitz on \(\mathbb T^d\).

    \item For each fixed \(h>0\),
    \[
    \nabla\widetilde K_h\in W^{1,\infty}(\mathbb T^d).
    \]
    In particular, \(\nabla\widetilde K_h\in L^\infty(\mathbb T^d)\), and
    \(\nabla\widetilde K_h\) is globally Lipschitz.

    \item For every \(v\in H^1(\mathbb T^d)\),
    \[
    \|\nabla v-\nabla(\widetilde K_h*_{\mathbb T}v)\|_{L^2(\mathbb T^d)}
    \to 0
    \qquad\text{as } h\downarrow0.
    \]
    Moreover, if \(v\in H^2(\mathbb T^d)\), then there exists a constant
    \(C>0\), independent of \(h\) and \(v\), such that for \(0<h\le1\),
    \[
    \|\nabla v-\nabla(\widetilde K_h*_{\mathbb T}v)\|_{L^2(\mathbb T^d)}
    \le
    Ch\|D^2v\|_{L^2(\mathbb T^d)}.
    \]
\end{enumerate}
\end{lemma}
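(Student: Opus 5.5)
The plan is to work entirely on the Fourier side of \(\mathbb T^d\), where the periodized Gaussian is explicit. By Poisson summation, \(K_h^{\mathbb T}\) has Fourier coefficients
\[
\widehat{K_h^{\mathbb T}}(\bm k)=\int_{\mathbb R^d}h^{-d}\pi^{-d/2}e^{-\|\bx\|^2/h^2}e^{-2\pi i \bm k\cdot\bx}\,\intd\bx=e^{-\pi^2h^2\|\bm k\|^2},\qquad \bm k\in\mathbb Z^d .
\]
Since torus convolution multiplies coefficients, \(\widehat{\widetilde K_h}(\bm k)=e^{-2\pi^2h^2\|\bm k\|^2}\). In fact \(\widetilde K_h\) is again a periodized Gaussian, with \(h\) replaced by \(\sqrt2\,h\). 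Everything below follows from this formula.

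\textbf{Items 1 and 2.} For fixed \(h>0\) the coefficients decay faster than any polynomial in \(\|\bm k\|\). Hence for every multi-index \(\alpha\) the series \(\sum_{\bm k}(2\pi i\bm k)^\alpha e^{-2\pi^2h^2\|\bm k\|^2}e^{2\pi i\bm k\cdot\bx}\) converges absolutely and uniformly. This gives \(\widetilde K_h\in C^\infty(\mathbb T^d)\), and the same argument applies to \(K_h^{\mathbb T}\). Alternatively, one can differentiate the defining lattice sum term by term, using locally uniform convergence of the Gaussian tails.

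Every derivative of \(\widetilde K_h\) is continuous on the compact torus, so it is bounded, with
\[
\|\partial^\alpha\widetilde K_h\|_\infty\le\sum_{\bm k}(2\pi\|\bm k\|)^{|\alpha|}e^{-2\pi^2h^2\|\bm k\|^2}<\infty .
\]
Taking \(|\alpha|=1\) shows \(\widetilde K_h\) is globally Lipschitz. Taking \(|\alpha|\le2\) gives \(\nabla\widetilde K_h\in W^{1,\infty}\). Global Lipschitz continuity of \(\nabla\widetilde K_h\) then follows from the mean value theorem along geodesics on \(\mathbb T^d\), with the torus distance.

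\textbf{Item 3.} Write \(v=\sum_{\bm k}\hat v(\bm k)e^{2\pi i\bm k\cdot\bx}\). By Parseval,
\[
\|\nabla v-\nabla(\widetilde K_h*v)\|_{L^2}^2=\sum_{\bm k}4\pi^2\|\bm k\|^2|\hat v(\bm k)|^2\bigl(1-e^{-2\pi^2h^2\|\bm k\|^2}\bigr)^2 .
\]
For \(v\in H^1\) each summand tends to \(0\) as \(h\downarrow0\) and is dominated by \(4\pi^2\|\bm k\|^2|\hat v(\bm k)|^2\), which is summable. Dominated convergence therefore gives the first claim.

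For \(v\in H^2\), use the elementary inequality \(1-e^{-s}\le\sqrt{s}\) for \(s\ge0\): it holds because \(1-e^{-s}\le\min(s,1)\le\sqrt s\). Then
\[
\bigl(1-e^{-2\pi^2h^2\|\bm k\|^2}\bigr)^2\le 2\pi^2h^2\|\bm k\|^2 .
\]
So the sum is bounded by \(2\pi^2h^2\sum_{\bm k}4\pi^2\|\bm k\|^4|\hat v(\bm k)|^2\). Finally,
\[
\|D^2v\|_{L^2}^2=\sum_{\bm k}\sum_{j,l}(2\pi)^4k_j^2k_l^2|\hat v(\bm k)|^2=\sum_{\bm k}16\pi^4\|\bm k\|^4|\hat v(\bm k)|^2 .
\]
Combining these gives
\[
\|\nabla v-\nabla(\widetilde K_h*v)\|_{L^2}\le \tfrac{1}{\sqrt2}\,h\,\|D^2v\|_{L^2},
\]
with a constant independent of \(h\) and \(v\). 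The bound in fact holds for all \(h>0\), so the restriction \(h\le1\) is not needed.

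\textbf{Main obstacle.} The only point needing care is the Poisson summation step that identifies the Fourier coefficients of the periodized Gaussian. This is justified by absolute convergence of the lattice sum: unfold the torus integral into an integral over \(\mathbb R^d\), interchanging sum and integral by Fubini. The rest is routine.
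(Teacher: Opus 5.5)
Your proof is correct and takes the same route as the paper. For items 1--2 you use smoothness and bounded derivatives on the compact torus, and for item 3 you use the explicit Fourier multiplier $e^{-2\pi^2h^2|k|^2}$, Parseval, dominated convergence, and the bound $1-e^{-s}\le\sqrt{s}$. Your version is slightly sharper than the paper's: the exact identity $\|D^2v\|_{L^2}^2=16\pi^4\sum_{k}|k|^4|\hat v(k)|^2$ gives the explicit constant $C=1/\sqrt{2}$ and shows the restriction $h\le 1$ is unnecessary.
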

The proof is standard and is given in Appendix~\ref{app:proof_kernel} for completeness. 
Although Lemma~\ref{lemma:kernel} is stated on the periodic domain
\(\mathbb T^d\), the periodicity assumption is not essential for these
kernel estimates. In the non-periodic whole-space setting, the same
conclusions hold on \(\mathbb R^d\) for any even, normalized
\(C_c^\infty(\mathbb R^d)\) mollifier \(K\), with the usual scaling
\(K_h(\bx)=h^{-d}K(\bx/h)\) and \(\widetilde K_h=K_h*K_h\). 
The proof follows from the standard approximate-identity and translation
estimate arguments, and we omit the routine variant for brevity.
Based on the properties of the kernel, we have the following well-posedness result.
\begin{assumption}\label{ass:regularity}
Assume that the $\bb_{\mathrm{model}}$ and $\bb_{\mathrm{true}}$ satisfies:
\medskip
\noindent{(A1)}
There exists a constant $B_{\mathrm{model}}>0$ such that for all $\rho\in\mathcal P_2(\Omega)$,
\[
\|\bbm(\cdot,\rho)\|_{L^\infty(\Omega)} \le B_{\mathrm{model}},
\qquad
\|\nabla\cdot \bbm(\cdot,\rho)\|_{L^\infty(\Omega)} \le B_{\mathrm{model}} .
\]

\medskip
\noindent{(A2)}
Assume that $\bb_{\true}$ and $\bb_{\model}$ is globally Lipschitz, i.e. there exists a constant $L_\true,L_\model>0$ such that for all $x,y\in\Omega$ and all $\rho,\rho'\in\mathcal P_2(\Omega)$,
\[
\|\bbt(\bx_1,\rho_1)-\bbt(\bx_2,\rho_2)\| \le L_{\true}(\|\bx_1-\bx_2\|+W_2(\rho_1,\rho_2)),
\]
and
\[
\|\bbm(\bx_1,\rho_1)-\bbm(\bx_2,\rho_2)\| \le L_{\model}(\|\bx_1-\bx_2\|+W_2(\rho_1,\rho_2)) .
\]

\end{assumption}
We now turn to the well-posedness of the regularized assimilated dynamics.
Although the nudging term depends nonlocally on the law $\nu_t$, convolution
against $\tilde K_h$ renders it Lipschitz in both the state and the measure, so
the standard McKean-Vlasov well-posedness theory applies.
\begin{proposition}[Well-Posedness of Equation \eqref{equ:assimilated_regularized}]\label{prop:well_possedness}
Under (A2), and also assume that $\nabla \tilde{K}_h$ is globally Lipschitz, i.e. there exists $L>0$ such that for any $\bx_1,\bx_2\in\mathbb{R}^d$, 
\begin{equation*}
\|\nabla\tilde{K}_h(\bx_1)-\nabla\tilde{K}_h(\bx_2)\|\leq L\|\bx_1-\bx_2\|.
\end{equation*}
For any $T>0$ and $ \Law(\bZ_0) \in \mathcal P_2(\mathbb R^d)$, the SDE \eqref{equ:assimilated_regularized} has a unique strong solution on $[0,T]$ and consequently, its law is the unique solution to the Fokker-Planck equation \eqref{equ:fpk_regularized}.
\end{proposition}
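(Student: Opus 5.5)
We reduce the statement to the classical Sznitman-type fixed-point theorem for McKean--Vlasov SDEs with Lipschitz coefficients in state and measure. The observation term \(K_h*\mu^{\mathrm{obs}}\) does not depend on \(\nu\), so we treat it as an external forcing. We assume it is given (as a function of \(t\), measurable in time) with \(\nabla(K_h*\mu^{\mathrm{obs}}_t)\) Lipschitz in \(\bx\), uniformly in \(t\in[0,T]\). This holds automatically when \(\mu^{\mathrm{obs}}_t\) is a probability measure (or a density with uniformly bounded mass) and \(\nabla^2 K_h\) is bounded. Define the full drift
\[
\mathbf{B}(t,\bx,\nu) := \bb_{\mathrm{model}}(\bx,\nu) - \lambda(\nabla\widetilde K_h*\nu)(\bx) + \lambda\nabla(K_h*\mu^{\mathrm{obs}}_t)(\bx).
\]

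\textbf{Step 1: Lipschitz estimates for \(\mathbf{B}\).}
\begin{itemize}
\item The model drift is Lipschitz in \((\bx,\nu)\) with constant \(L_{\mathrm{model}}\) by (A2).
\item For the interaction part, spatial Lipschitz continuity is immediate: \(|\nabla\widetilde K_h*\nu(\bx_1)-\nabla\widetilde K_h*\nu(\bx_2)|\le L\|\bx_1-\bx_2\|\), since \(\nu\) is a probability measure.
\item For the measure dependence, take any coupling \(\pi\) of \(\nu,\nu'\):
\[
|(\nabla\widetilde K_h*\nu)(\bx)-(\nabla\widetilde K_h*\nu')(\bx)| = \Big|\int [\nabla\widetilde K_h(\bx-\by)-\nabla\widetilde K_h(\bx-\by')]\,\intd\pi(\by,\by')\Big| \le L\int\|\by-\by'\|\,\intd\pi .
\]
Taking the infimum gives the bound \(L\,W_1(\nu,\nu')\le L\,W_2(\nu,\nu')\).
\item The forcing term is Lipschitz in \(\bx\) and independent of \(\nu\).
\end{itemize}
Hence \(\mathbf{B}\) is globally Lipschitz with constant \(L_B=L_{\mathrm{model}}+2\lambda L+\lambda L_{\mathrm{obs}}\). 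It also has linear growth, since \(|\mathbf{B}(t,\bx,\nu)|\le |\mathbf{B}(t,0,\delta_0)| + L_B(\|\bx\|+W_2(\nu,\delta_0))\).

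\textbf{Step 2: contraction on path laws.} Fix \(\bZ_0\) with law in \(\mathcal P_2\). For a continuous flow \(m=(m_t)_{t\le T}\subset\mathcal P_2\), solve the ordinary SDE \(\intd\bZ^m_t=\mathbf{B}(t,\bZ^m_t,m_t)\intd t+\Sigma\intd\bW_t\). It has a unique strong solution with finite second moments, by the standard Lipschitz SDE theory and Step 1. Set \(\Phi(m)_t=\mathrm{Law}(\bZ^m_t)\). For two flows \(m,m'\), use the same \(\bZ_0\) and \(\bW\); the noise cancels since \(\Sigma\) is constant. Then Gronwall gives
\[
\mathbb E\sup_{s\le t}\|\bZ^m_s-\bZ^{m'}_s\|^2\le C_T\int_0^t W_2^2(m_s,m'_s)\,\intd s ,
\]
and hence \(\sup_{s\le t}W_2^2(\Phi(m)_s,\Phi(m')_s)\le C_T\int_0^t \sup_{r\le s}W_2^2(m_r,m'_r)\,\intd s\). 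Iterating, \(\Phi^k\) is a contraction for large \(k\), with constant \((C_TT)^k/k!\). This works either on path measures with the sup-\(W_2\) metric or on \(C([0,T];\mathcal P_2)\), both complete. The unique fixed point \(\nu\) gives a strong solution \(\bZ=\bZ^\nu\) with \(\mathrm{Law}(\bZ_t)=\nu_t\). Pathwise uniqueness follows from the same estimate applied to two solutions, using \(W_2^2(\mathrm{Law}\bZ_t,\mathrm{Law}\bZ'_t)\le\mathbb E\|\bZ_t-\bZ'_t\|^2\).

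\textbf{Step 3: the Fokker--Planck equation.} Apply Itô's formula to \(\varphi(\bZ_t)\) with \(\varphi\in C_c^\infty\) and take expectations. This shows that \(\nu_t\) is a weak solution of \eqref{equ:fpk_regularized}; the sign of the nudging term matches after using \(K_h*r_t=\widetilde K_h*\nu_t-K_h*\mu^{\mathrm{obs}}_t\).

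For uniqueness, let \(\rho_t\) be any weak solution in \(C([0,T];\mathcal P_2)\). Freeze \(\rho\) in the drift; the resulting equation is a linear Fokker--Planck equation with Lipschitz drift and constant diffusion. Its weak solutions in this class are unique, for example by the superposition principle or the uniqueness theorem of Bogachev--Röckner--Shaposhnikov type for Lipschitz coefficients. They therefore coincide with \(\mathrm{Law}(\bZ^\rho_t)=\Phi(\rho)_t\). Thus \(\rho\) is a fixed point of \(\Phi\), and \(\rho=\nu\).

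We expect this last identification to be the main obstacle. It requires invoking uniqueness for linear Fokker--Planck equations in the measure-valued class, which is standard but must be cited carefully. The Lipschitz bound in the measure variable in Step 1 is the key new ingredient, but it is elementary via couplings.
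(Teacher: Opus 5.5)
Your proof is correct and follows the same route as the paper. You show that the full drift is globally Lipschitz in $(\mathbf{x},\nu)$ with respect to the Euclidean norm and $W_2$; your coupling estimate is exactly the paper's Kantorovich--Rubinstein step giving $L\,W_1\le L\,W_2$. You then apply the standard McKean--Vlasov fixed-point theory, which the paper cites as Theorem 1.7 of Carmona's lectures rather than re-deriving it as you do.

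The remaining differences are minor. The paper gets the Lipschitz bound on the observation term directly from the stated hypothesis via $K_h*\mu^{\mathrm{obs}}_t=\widetilde K_h*\mu_t$ (exact smoothed observations, constant $L$), whereas you impose it as a separate, slightly more general assumption. You also give an actual argument for uniqueness of the Fokker--Planck solution (freezing the measure and using linear Fokker--Planck uniqueness), which the paper only asserts with ``consequently''.
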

The proof is given in Appendix \ref{app:proof_well_possedness}. Next, we approximate the assimilated mean-field process by finite particles as
$$
\begin{aligned}
\intd \bZ^{i,N}_t = &\bb_{\mathrm{model}}(\bZ^{i,N}_t, \nu^N_t) \intd t \\
&-  \lambda \nabla \left[ \left(\tilde  K_h * \nu^N_t\right)(\bZ^{i,N}_t) -\left( K_h * \mu^{\mathrm{obs}}_t\right)(\bZ^{i,N}_t) \right] \intd t
+ \Sigma \intd \bW^i_t, 
\end{aligned}
$$
where $\nu^N_t = \frac{1}{N}\sum_{j=1}^N \delta_{\bZ^{j,N}_t}$ is the empirical measure of the system. Expanding the convolution terms, the explicit interaction dynamics are given by:
\begin{equation}\label{equ:density_obs}
\begin{aligned}
\intd \bZ^{i,N}_t
=& \bb_{\mathrm{model}}\left(\bZ^{i,N}_t, \nu^N_t \right)\,\intd t  \\ 
& - \lambda\Bigg(\frac{1}{N}\sum_{j=1}^N \nabla \tilde  K_h(\bZ^{i,N}_t - \bZ^{j,N}_t)  -  \nabla   K_h *\mu^{\mathrm{obs}} (\bZ^{i,N}_t)\Bigg)\,\intd t+ \Sigma \,\intd \bW^i_t.
\end{aligned}
\end{equation}
If we only observe the unlabeled  locations $\{\bX_t^{\mathrm{obs},k,M}\}_{k=1}^M$, then
\begin{equation}\label{equ:location_obs}
\begin{aligned}
\intd \bZ^{i,N}_t
=& \bb_{\mathrm{model}}\left(\bZ^{i,N}_t, \nu^N_t \right)\,\intd t\\
\quad\,
&  - \lambda \Bigg(\frac{1}{N}\sum_{j=1}^N \nabla \tilde  K_h(\bZ^{i,N}_t - \bZ^{j,N}_t) 
 - \frac{1}{M}\sum_{k=1}^M \nabla   \tilde K_h(\bZ^{i,N}_t - \bX_t^{\mathrm{obs},k,M})\Bigg)\,\intd t \\
 &+ \Sigma \,\intd \bW^i_t.
\end{aligned}
\end{equation}
\begin{proposition}[Mean-Field Convergence and Propagation of Chaos for the Nudging Particle System]\label{equ:propagation_of_chaos}
    Let the assumptions of Proposition \ref{prop:well_possedness} hold. Also, assume that $\nabla \tilde{K}_h$ is bounded, i.e. $\|\nabla \tilde{K}_h\|_\infty < \infty$. Let $(\bZ_t^{i,N})_{i=1}^N$ be the $N$-particle system solving \eqref{equ:density_obs} with $f_0$-chaotic initial data $\bZ_0^{i,N}\sim f_0$. Let $f_t$ be the unique solution to the mean field Fokker-Planck equation \eqref{equ:fpk_regularized} with initial condition $f_0$. Then, the $N$-particle system \eqref{equ:density_obs} converges to mean field model \eqref{equ:assimilated_regularized} as $N\to \infty$. That is, for any $T>0$, the $N$-particle distribution $f_t^N= \mathrm{Law}(\bZ_t^{1,N},\cdots,\bZ_t^{N,N})$ is $f_t$-chaotic, satisfying:
    \[
    \lim_{N\to\infty} W_2\left(f_{[0,T]}^{1,N},f_{[0,T]}\right) = 0,
    \]
    where $f_t^{1,N}$ is the first marginal of $f_t^N$.
\end{proposition}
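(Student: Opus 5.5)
We will prove this by Sznitman's synchronous coupling argument. On the same probability space, we take i.i.d. initial data $\bZ_0^{i}\sim f_0$ and independent Brownian motions $\bW^i$. For each $i$ we let $\bar\bZ^i_t$ be the unique strong solution of the mean-field SDE \eqref{equ:assimilated_regularized} driven by $(\bZ_0^i,\bW^i)$; this solution exists and is unique by Proposition~\ref{prop:well_possedness}. We let $\bZ^{i,N}_t$ solve the particle system \eqref{equ:density_obs} with the same initial data and noise. The particle system has a unique strong solution, since its drift is globally Lipschitz on $\mathbb R^{dN}$: $\bb_{\mathrm{model}}$ satisfies (A2), and the empirical measures obey $W_2(\nu^N_{\mathbf z},\nu^N_{\mathbf z'})^2\le \frac1N\sum_j|z_j-z'_j|^2$. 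The processes $\bar\bZ^i$ are i.i.d. with law $f_t$. Because $\mathrm{Law}(\bZ^{1,N}_{[0,T]},\bar\bZ^1_{[0,T]})$ is a coupling, it suffices to show $\mathbb E\sup_{t\le T}|\bZ^{1,N}_t-\bar\bZ^1_t|^2\to0$; chaoticity then follows by the standard equivalence (Sznitman), since all pairs $(\bZ^{i,N},\bar\bZ^i)$ are exchangeable. The observation term $\nabla K_h*\mu^{\mathrm{obs}}_t$ does not depend on the particle law. Its difference between the two systems is therefore bounded by $\mathrm{Lip}(\nabla K_h*\mu^{\mathrm{obs}}_t)\,|\bZ^{i,N}_t-\bar\bZ^i_t|$, which requires this term to be Lipschitz uniformly in $t$. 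This is the same requirement already used in Proposition~\ref{prop:well_possedness}, and it holds for a smooth kernel.

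The noise cancels, so the difference $\bZ^{i,N}_t-\bar\bZ^i_t$ solves a random ODE. We split the drift difference into three pieces.
(i) The model drift gives $L_{\mathrm{model}}\bigl(|\bZ^{i,N}-\bar\bZ^i| + W_2(\nu^N_t,f_t)\bigr)$. We bound $W_2(\nu^N_t,f_t)\le W_2(\nu^N_t,\bar\nu^N_t)+W_2(\bar\nu^N_t,f_t)$, where $\bar\nu^N_t=\frac1N\sum_j\delta_{\bar\bZ^j_t}$. The first term satisfies $W_2(\nu^N_t,\bar\nu^N_t)^2\le\frac1N\sum_j|\bZ^{j,N}_t-\bar\bZ^j_t|^2$.
(ii) The interaction term is $\frac1N\sum_j\nabla\tilde K_h(\bZ^{i,N}-\bZ^{j,N}) - \nabla\tilde K_h*f_t(\bar\bZ^i)$. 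We add and subtract $\frac1N\sum_j\nabla\tilde K_h(\bar\bZ^i-\bar\bZ^j)$. The Lipschitz bound with constant $L$ controls the first part by $L\bigl(|\bZ^{i,N}-\bar\bZ^i|+\frac1N\sum_j|\bZ^{j,N}-\bar\bZ^j|\bigr)$. What remains is the fluctuation term $\frac1N\sum_j\bigl[\nabla\tilde K_h(\bar\bZ^i-\bar\bZ^j)-\nabla\tilde K_h*f_t(\bar\bZ^i)\bigr]$.
(iii) The observation term is Lipschitz, as explained above.
We take squares, sum over $i$, and use exchangeability to replace $\frac1N\sum_i$ by index $1$. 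With $u_N(t)=\mathbb E\sup_{s\le t}|\bZ^{1,N}_s-\bar\bZ^1_s|^2$, Gronwall's inequality then gives
\[
u_N(T)\le C_T\int_0^T\Bigl(\mathbb E\bigl|\tfrac1N\textstyle\sum_j[\nabla\tilde K_h(\bar\bZ^1_s-\bar\bZ^j_s)-\nabla\tilde K_h*f_s(\bar\bZ^1_s)]\bigr|^2+\mathbb E\,W_2^2(\bar\nu^N_s,f_s)\Bigr)\,\mathrm ds .
\]

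The main step is to show that both terms in the integrand vanish. The fluctuation term is where boundedness of $\nabla\tilde K_h$ enters. Conditionally on $\bar\bZ^1_s$, the summands with $j\ne1$ are i.i.d. and have mean zero. We expand the square and find that the cross terms vanish. The diagonal terms and the $j=1$ term are bounded using $\|\nabla\tilde K_h\|_\infty$, which gives the bound $4\|\nabla\tilde K_h\|_\infty^2/N$ uniformly in $s$. The empirical-measure term is harder, because a rate would require moment assumptions. We therefore take a qualitative route. By (A2) and the linear growth of the drift (the nudging drift is bounded), $f_s$ has finite second moment uniformly on $[0,T]$. The law of large numbers in $W_2$ (Varadarajan's theorem together with uniform integrability of $|\bar\bZ|^2$) then gives $\mathbb E W_2^2(\bar\nu^N_s,f_s)\to0$ for each $s$. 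This quantity is bounded by $2\,\mathbb E|\bar\bZ^1_s|^2+2\int|x|^2\mathrm df_s$, which is uniformly bounded, so dominated convergence yields convergence of the time integral. Consequently $u_N(T)\to0$, and therefore $W_2(f^{1,N}_{[0,T]},f_{[0,T]})^2\le u_N(T)\to0$. If a rate is desired, Fournier--Guillin under a higher moment of $f_0$ gives $\varepsilon_N$ explicitly. The empirical-measure term requires the most care, because it is the only place where the model drift's $W_2$-dependence prevents a purely $1/N$ variance estimate.
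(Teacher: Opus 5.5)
Your proposal is correct and follows essentially the same route as the paper's proof. Both use synchronous coupling with i.i.d.\ mean-field copies and the same three-way split of the drift difference. Both add and subtract $\frac1N\sum_j\nabla\tilde K_h(\bar{\mathbf Z}^i-\bar{\mathbf Z}^j)$ and bound the fluctuation by $4\|\nabla\tilde K_h\|_\infty^2/N$. Both then use Varadarajan's theorem plus uniform integrability to get $\mathbb E\,W_2^2(\bar\nu^N_s,f_s)\to0$ and close with Gronwall. The only differences are minor: the paper obtains the Lipschitz bound on the observation term from the identity $K_h*\mu^{\mathrm{obs}}=\tilde K_h*\mu_t$, and your explicit handling of $W_2(\nu^N_s,\bar\nu^N_s)$ via exchangeability and of the time integral via dominated convergence fills in steps the paper leaves implicit.
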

The proof appears in Appendix \ref{app:propagation_of_chaos}. 

We next show that the solution of equation~\eqref{equ:assimilated_regularized}
remains strictly positive on $[0,T]$ for any fixed final time $T>0$,
a property that will be needed in the proof of convergence.

\begin{proposition}[Positivity on finite time intervals]
\label{prop:positivity}
Let $T>0$ be fixed and let $\Omega=\mathbb T^d$. Assume that
Assumption~\ref{ass:regularity} holds, and let $K_h$ be the Gaussian kernel
introduced in Lemma~\ref{lemma:kernel}. Let $\nu$ be a classical
solution on $[0,T]$ of
\[
\partial_t \nu
= \nabla\cdot(A\nabla \nu)-\nabla\cdot\bigl(\nu\,c[\nu]\bigr),
\]
where
\[
c[\nu](x,t)
:= \bb_{\mathrm{model}}(x,\nu_t)
-\lambda \nabla\Bigl(K_h*(K_h*\nu_t-\mu_t^{\mathrm{obs}})\Bigr)(x).
\]
Assume moreover that
\[
\sup_{t\in[0,T]}
\Bigl(
\|K_h*\nu_t\|_{L^\infty(\Omega)}
+\|\mu_t^{\mathrm{obs}}\|_{L^\infty(\Omega)}
\Bigr)
<\infty,
\]
and that the initial density satisfies
\[
\nu_0(x)\ge \underline\nu_0>0,
\qquad \forall x\in\Omega.
\]
Then there exists a constant $\underline\nu_T>0$ such that
\[
\nu_t(x)\ge \underline\nu_T,
\qquad \forall (x,t)\in\Omega\times[0,T].
\]
More precisely,
\[
\underline\nu_T=\underline\nu_0 e^{-CT},
\]
where
\[
C:=B_{\mathrm{model}}
+\lambda \|\Delta K_h\|_{L^1(\Omega)}
\sup_{t\in[0,T]}
\|K_h*\nu_t-\mu_t^{\mathrm{obs}}\|_{L^\infty(\Omega)}.
\]
\end{proposition}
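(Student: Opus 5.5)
Since $\nu$ is a classical solution, I will expand the divergence of the drift term and treat the equation as a linear parabolic equation with frozen coefficients. Writing $c=c[\nu]$, we have
\[
\partial_t\nu=\nabla\cdot(A\nabla\nu)-c\cdot\nabla\nu-(\nabla\cdot c)\,\nu .
\]
The coefficients $c$ and $\nabla\cdot c$ are fixed, known functions of $(x,t)$ once $\nu$ is given. So $\nu$ solves a linear equation of the form $\mathcal L\nu+V\nu=0$ with zeroth-order coefficient $V:=\nabla\cdot c$. The plan is a comparison (minimum principle) argument on the compact manifold $\mathbb T^d$, which has no boundary. Periodicity is essential here, because it means the minimum over $x$ is always attained at an interior point.

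\textbf{Bounding the zeroth-order coefficient.} By (A1), $\|\nabla\cdot\bb_{\mathrm{model}}(\cdot,\nu_t)\|_{L^\infty}\le B_{\mathrm{model}}$. For the nudging part, set $r_t:=K_h*\nu_t-\mu_t^{\mathrm{obs}}$. Then
\[
\nabla\cdot\bigl(-\lambda\nabla(K_h*r_t)\bigr)=-\lambda(\Delta K_h)*r_t ,
\]
and Young's inequality gives
\[
\|(\Delta K_h)*r_t\|_{L^\infty}\le\|\Delta K_h\|_{L^1}\|r_t\|_{L^\infty}.
\]
This is finite: $K_h$ is smooth on $\mathbb T^d$ by Lemma~\ref{lemma:kernel}, and $\sup_t\|r_t\|_{L^\infty}<\infty$ by hypothesis. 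Hence $|\nabla\cdot c|\le C$ on $\Omega\times[0,T]$, with $C$ exactly the constant in the statement.

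\textbf{Minimum principle.} Define $w(x,t):=e^{Ct}\nu(x,t)$. It satisfies
\[
\partial_t w=\nabla\cdot(A\nabla w)-c\cdot\nabla w+(C-\nabla\cdot c)\,w ,
\]
and the coefficient $C-\nabla\cdot c$ is nonnegative. Fix $\varepsilon>0$ and consider $w_\varepsilon:=w-\underline\nu_0+\varepsilon t+\varepsilon$. Suppose $w_\varepsilon$ has a first zero at some time $t^*\in(0,T]$, attained at $x^*$. Then at $(x^*,t^*)$:
\[
\nabla w=0,\qquad \nabla\cdot(A\nabla w)=A:D^2w\ge 0,\qquad \partial_t w_\varepsilon\le 0 ,
\]
where the second relation uses that $A$ is positive semidefinite and $x^*$ is a spatial minimum. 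Since $w=\underline\nu_0-\varepsilon-\varepsilon t^*$ there, we need $w\ge 0$ at that point to conclude. This holds provided $\nu\ge0$, or more simply by choosing $\varepsilon$ small so that $w>0$ at $(x^*,t^*)$. Then
\[
\partial_t w_\varepsilon=\partial_t w+\varepsilon\ge (C-\nabla\cdot c)\,w+\varepsilon>0 ,
\]
which is a contradiction. Letting $\varepsilon\to0$ gives $w\ge\underline\nu_0$, that is, $\nu_t\ge\underline\nu_0e^{-Ct}\ge\underline\nu_0e^{-CT}$.

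\textbf{Main obstacle.} The delicate step is justifying the bound on $\nabla\cdot c$. This needs the divergence hypothesis in (A1) and the exchange of derivatives with convolution, $\nabla\cdot\nabla(K_h*r)=(\Delta K_h)*r$. The exchange is legitimate because $K_h\in C^\infty(\mathbb T^d)$ and $r_t$ is bounded (as a measure or density). It also requires enough regularity of the classical solution, namely $C^{2,1}$ and continuity up to $t=0$, for the pointwise minimum argument.
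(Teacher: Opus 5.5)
Your proposal is correct and is essentially the paper's argument: you bound $\|\nabla\cdot c[\nu]\|_{L^\infty}$ by the same $C$ using (A1) and Young's inequality with $\Delta K_h$, and then compare $\nu$ with the spatially constant barrier $\underline\nu_0 e^{-Ct}$. The only difference is that you prove the comparison step yourself, via the $e^{Ct}$ rescaling and the $\varepsilon$-perturbed first-touching argument on the boundaryless torus, whereas the paper checks that $m(t)=\underline\nu_0e^{-Ct}$ is a subsolution and cites the parabolic comparison principle.
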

The proof is provided in Appendix \ref{app:proof_positivity}.

The following result should be interpreted as an idealized contraction estimate under exact smoothed observations. Finite-sample and noisy observations add additional residual terms to the bias floor.
\begin{theorem}[$L^2$-error decay under kernel-regularized nudging]
\label{thm:main_kdenudge}
Let $\Omega=\mathbb T^d$ and let $A=\frac12\Sigma\Sigma^\top$ be a constant, symmetric positive-definite matrix with smallest eigenvalue $\kappa>0$.
Let $(\mu_t)_{t\ge0}$ and $(\nu_t)_{t\ge0}$ be classical solutions of the true Fokker--Planck equation~\eqref{equ:true_density} and the assimilated equation~\eqref{equ:fpk_regularized}, respectively, with initial conditions satisfying $\displaystyle\int_\Omega (\nu_0-\mu_0)\,\intd x = 0$. Define the error $e_t:=\nu_t-\mu_t$ and the $L^2$-error energy
\[
V(t) := \tfrac{1}{2}\|e_t\|_{L^2(\Omega)}^2.
\]
Suppose the following conditions hold.
\begin{enumerate}[label=\textup{(C\arabic*)},leftmargin=*]
    \item \label{cond:lip_L2}
    The model drift satisfies Assumption~\ref{ass:regularity}, and, in addition,
    \[
    \|\bbm(\cdot,\mu)-\bbm(\cdot,\nu)\|_{L^2(\Omega)}
    \le L_\mu\,\|\mu-\nu\|_{L^2(\Omega)}
    \]
    for all $\mu,\nu\in\mathcal P_2(\Omega)$.
 
    \item \label{cond:density_bounds}
    The true density and the assimilated density are uniformly bounded above, 
    \[
    \|\mu_t\|_{L^\infty},\|\nu_t\|_{L^\infty}\le\bar\rho\], and the assimilated density is uniformly bounded below, $\nu_t(x)\ge\underline\nu>0$, for all $(x,t)\in\Omega\times[0,T]$. (The lower bound is guaranteed by Proposition~\ref{prop:positivity} in some special cases.)
 
    \item \label{cond:kernel_approx}
    There exists a function $\delta(h)\to 0$ as $h\to 0$ such that
    \[
    \|\nabla e_t - \nabla(\tilde K_h*e_t)\|_{L^2(\Omega)}
    \le \delta(h)\,\|\nabla e_t\|_{L^2(\Omega)}.
    \]
\end{enumerate}
 
\medskip\noindent
Define the advection instability coefficient
\[
C_{\mathrm{adv}} := B_{\mathrm{model}} + \bar\rho\,L_\mu,
\]
the model-error bound
\[
\Delta := \sup_{t\in[0,T]}\,
\|\bbm(\cdot,\mu_t)-\bbt(\cdot,\mu_t)\|_{L^\infty(\Omega)},
\]
and the Poincar\'e constant $C_P$ of $\Omega$ for zero-mean functions.
If the observation resolution $h$ and the nudging intensity $\lambda$ satisfy
\begin{equation}\label{eq:h_condition}
\delta(h) \le \frac{\underline\nu}{2\bar\rho},
\end{equation}
\begin{equation}\label{eq:lambda_condition}
\lambda > \frac{1}{\underline\nu}
\left( \frac{2C_{\mathrm{adv}}^2\,C_P}{\kappa} - \kappa \right),
\end{equation}
then the $L^2$ error decays exponentially to a model-error floor. Explicitly,
with the rate
\[
\alpha
:=
\frac{\kappa+\lambda\underline\nu}{C_P}
-\frac{2C_{\mathrm{adv}}^2}{\kappa}
\;>\;0,
\]
it holds for all $t\ge 0$ that
\begin{equation}\label{eq:L2_bound}
V(t)\le e^{-\alpha t}\,V(0)+\frac{\bar\rho^2}{\alpha\kappa}\,\Delta^2 .
\end{equation}
In particular,
\[
\limsup_{t\to\infty}\|\nu_t-\mu_t\|_{L^2(\Omega)}^2
\le \frac{2\bar\rho^2}{\alpha\kappa}\,\Delta^2,
\]
so the long-time error is controlled by a bias floor proportional to $\Delta^2$
that vanishes when the model is exact ($\Delta=0$).

\end{theorem}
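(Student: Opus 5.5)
The plan is an energy estimate on $V(t)=\tfrac12\|e_t\|_{L^2}^2$. First subtract the true Fokker--Planck equation~\eqref{equ:true_density} from the assimilated equation~\eqref{equ:fpk_regularized}. We use the exact-observation identity $\mu_t^{\mathrm{obs}}=K_h*\mu_t$, so that $K_h*r_t=\widetilde K_h*e_t$. This gives
\[
\partial_t e=\nabla\cdot(A\nabla e)-\nabla\cdot\bigl(\nu\,\bbm(\cdot,\nu)-\mu\,\bbt(\cdot,\mu)\bigr)+\lambda\nabla\cdot\bigl(\nu\nabla(\widetilde K_h*e)\bigr).
\]
Mass is conserved and $\int e_0=0$, so $e_t$ has zero mean for all $t$, and the Poincar\'e inequality $\|e\|^2\le C_P\|\nabla e\|^2$ is available. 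Test against $e$ and integrate by parts on $\mathbb T^d$, where there are no boundary terms. The diffusion term gives $-\int\nabla e\cdot A\nabla e\le-\kappa\|\nabla e\|^2$.

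For the advection term, split
\[
\nu\,\bbm(\nu)-\mu\,\bbt(\mu)=e\,\bbm(\nu)+\mu\bigl(\bbm(\nu)-\bbm(\mu)\bigr)+\mu\bigl(\bbm(\mu)-\bbt(\mu)\bigr).
\]
After integration by parts each piece is paired with $\nabla e$.
\begin{itemize}
\item The first piece is bounded by $B_{\mathrm{model}}\|e\|\|\nabla e\|$ using (A1).
\item The second is bounded by $\bar\rho L_\mu\|e\|\|\nabla e\|$ using (C1) and (C2).
\item The third is bounded by $\bar\rho\Delta\|\nabla e\|$.
\end{itemize}
Together these give $C_{\mathrm{adv}}\|e\|\|\nabla e\|+\bar\rho\Delta\|\nabla e\|$, with $C_{\mathrm{adv}}$ exactly the constant in the statement. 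For the first piece one could alternatively write $\int e\,\bbm\cdot\nabla e=-\tfrac12\int e^2\nabla\cdot\bbm$. The cruder Cauchy--Schwarz bound matches the theorem's constant, so I will use that.

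The nudging term contributes $-\lambda\int\nu\,\nabla e\cdot\nabla(\widetilde K_h*e)$. Write $\nabla(\widetilde K_h*e)=\nabla e-g$ with $\|g\|\le\delta(h)\|\nabla e\|$ by (C3). Then
\[
-\lambda\int\nu|\nabla e|^2+\lambda\int\nu\,\nabla e\cdot g\le-\lambda\underline\nu\|\nabla e\|^2+\lambda\bar\rho\,\delta(h)\|\nabla e\|^2\le-\tfrac{\lambda\underline\nu}{2}\|\nabla e\|^2
\]
by \eqref{eq:h_condition}. This is the step where positivity of $\nu$ (hence Proposition~\ref{prop:positivity}) is essential, and where observing only at scale $h$ costs a factor $1/2$. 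I will need to track whether the stated rate uses $\lambda\underline\nu$ or $\lambda\underline\nu/2$ and adjust the Young's-inequality splitting accordingly. I expect this bookkeeping to be the main obstacle: the constants in $\alpha$ and \eqref{eq:lambda_condition} must come out exactly.

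Finally combine the bounds. Use Young's inequality on the advection term, $C_{\mathrm{adv}}\|e\|\|\nabla e\|\le\frac{\kappa}{4}\|\nabla e\|^2+\frac{C_{\mathrm{adv}}^2}{\kappa}\|e\|^2$, and on the bias term, $\bar\rho\Delta\|\nabla e\|\le\frac{\kappa}{4}\|\nabla e\|^2+\frac{\bar\rho^2\Delta^2}{\kappa}$. This leaves a dissipation $-(c_1\kappa+c_2\lambda\underline\nu)\|\nabla e\|^2$. Converting it with Poincar\'e, $-\|\nabla e\|^2\le-\|e\|^2/C_P=-2V/C_P$, should give
\[
V'\le-\alpha V+\frac{\bar\rho^2}{\kappa}\Delta^2 ,
\]
with $\alpha$ as defined and with positivity of $\alpha$ equivalent to \eqref{eq:lambda_condition}. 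If the factors of $2$ do not match on the first pass, I will rebalance the Young weights, for instance by keeping the full $-\kappa\|\nabla e\|^2$ and absorbing the advection term into the $\lambda$-part. Gr\"onwall's inequality then yields \eqref{eq:L2_bound}, and letting $t\to\infty$ gives the $\limsup$ bound after multiplying by $2$.
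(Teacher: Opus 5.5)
Your proposal is correct and follows the paper's proof essentially step for step. Both arguments use the same error equation with $K_h*r_t=\widetilde K_h*e_t$, the same three-way flux splitting, and the same add-and-subtract of $\nabla e_t$ in the nudging term giving $-\tfrac{\lambda\underline\nu}{2}\|\nabla e_t\|^2$, followed by Poincar\'e and Gr\"onwall. Your Young's weights ($\tfrac{\kappa}{4}$ on each product) are exactly the paper's choice $\epsilon=\kappa/2$, and they already yield the stated $\alpha$ and condition \eqref{eq:lambda_condition} without any rebalancing, because the dissipation coefficient is $\tfrac{\kappa+\lambda\underline\nu}{2}$ and $\|e_t\|^2=2V$.
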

 
\begin{proof}
Subtracting the true equation~\eqref{equ:true_density} from the assimilated equation~\eqref{equ:fpk_regularized}, the error $e_t=\nu_t-\mu_t$ satisfies
\begin{equation}\label{equ:error_evolution}
\partial_t e_t
= \nabla\cdot(A\nabla e_t)
- \nabla\cdot\bigl(\nu_t\bbm(\cdot,\nu_t)\bigr)
+ \nabla\cdot\bigl(\mu_t\bbt(\cdot,\mu_t)\bigr)
+ \lambda\,\nabla\cdot\bigl(\nu_t\,\nabla(K_h*r_t)\bigr),
\end{equation}
where $r_t = K_h*\nu_t - \mu_t^{\mathrm{obs}} = K_h*e_t$.
Since~\eqref{equ:error_evolution} is in divergence form, the zero-mean condition is preserved:
$\int_\Omega e_t\,\intd x = 0$ for all $t\ge 0$.
Hence the Poincar\'e inequality applies:
\begin{equation}\label{eq:poincare_e}
\|e_t\|_{L^2(\Omega)}^2 \le C_P\,\|\nabla e_t\|_{L^2(\Omega)}^2.
\end{equation}
Testing~\eqref{equ:error_evolution} against $e_t$ and integrating by parts yields
\begin{equation}\label{eq:energy_identity}
\begin{aligned}
  V'(t)
=& \underbrace{-\langle \nabla e_t,\, A\nabla e_t\rangle}_{I_{\mathrm{diff}}}
\;+\; \underbrace{\langle \nabla e_t,\, \nu_t\bbm(\cdot,\nu_t) - \mu_t\bbt(\cdot,\mu_t)\rangle}_{I_{\mathrm{adv}}}\\
&
- \underbrace{\lambda\langle \nabla e_t,\, \nu_t\,\nabla(K_h*r_t)\rangle}_{I_{\mathrm{nud}}}.  
\end{aligned}
\end{equation}
For the diffusion term, by the coercivity of $A$,
\[
I_{\mathrm{diff}} = -\langle\nabla e_t,\,A\nabla e_t\rangle
\le -\kappa\,\|\nabla e_t\|^2.
\]
Next, for the advection term, we decompose the flux difference as
\[
\begin{aligned}
\nu_t\bbm(\cdot,\nu_t) - \mu_t\bbt(\cdot,\mu_t)
= & e_t\,\bbm(\cdot,\nu_t)
+ \mu_t\bigl(\bbm(\cdot,\nu_t)-\bbm(\cdot,\mu_t)\bigr)\\
&+ \mu_t\bigl(\bbm(\cdot,\mu_t)-\bbt(\cdot,\mu_t)\bigr).  
\end{aligned}
\]
Using (C1), (C2), and Cauchy--Schwarz, each inner product with $\nabla e_t$ is bounded by
\[
I_{\mathrm{adv}}
\le B_{\mathrm{model}}\|\nabla e_t\|\,\|e_t\|
+ \bar\rho L_\mu\|\nabla e_t\|\,\|e_t\|
+ \bar\rho\,\|\nabla e_t\|\,\Delta
= C_{\mathrm{adv}}\|\nabla e_t\|\,\|e_t\|
+ \bar\rho\,\|\nabla e_t\|\,\Delta.
\]
Applying Young's inequality $ab\le\frac{\epsilon}{2}a^2+\frac{1}{2\epsilon}b^2$ to each product with a parameter $\epsilon>0$ gives
\begin{equation}\label{eq:adv_young}
I_{\mathrm{adv}}
\le \epsilon\,\|\nabla e_t\|^2
+ \frac{C_{\mathrm{adv}}^2}{2\epsilon}\,\|e_t\|^2
+ \frac{\bar\rho^2}{2\epsilon}\,\Delta^2.
\end{equation}
 
Lastly, we bound the nudging term. Since $K_h*r_t = \tilde K_h*e_t$, we add and subtract $\nabla e_t$ inside the inner product:
\[
\begin{aligned}
I_{\mathrm{nud}}
&= -\lambda\langle\nu_t\nabla e_t,\,\nabla(\tilde K_h*e_t)\rangle \\
&= -\lambda\langle\nu_t\nabla e_t,\,\nabla e_t\rangle
   -\lambda\langle\nu_t\nabla e_t,\,\nabla(\tilde K_h*e_t)-\nabla e_t\rangle.
\end{aligned}
\]
The first term is bounded using (C2): $-\lambda\langle\nu_t\nabla e_t,\nabla e_t\rangle \le -\lambda\underline\nu\,\|\nabla e_t\|^2$.
For the second, Cauchy--Schwarz and the kernel-approximation condition~(C3) yield
\[
\bigl|\lambda\langle\nu_t\nabla e_t,\,\nabla(\tilde K_h*e_t)-\nabla e_t\rangle\bigr|
\le \lambda\bar\rho\,\|\nabla e_t\|\,\delta(h)\|\nabla e_t\|
= \lambda\bar\rho\,\delta(h)\,\|\nabla e_t\|^2.
\]
Combining,
\begin{equation}\label{eq:nud_est}
I_{\mathrm{nud}}
\le -\lambda\bigl(\underline\nu - \bar\rho\,\delta(h)\bigr)\|\nabla e_t\|^2.
\end{equation}
Substituting~\eqref{eq:adv_young} and~\eqref{eq:nud_est} into~\eqref{eq:energy_identity} gives
\[
V'(t)
\le \bigl(-\kappa - \lambda(\underline\nu-\bar\rho\,\delta(h)) + \epsilon\bigr)\|\nabla e_t\|^2
+ \frac{C_{\mathrm{adv}}^2}{2\epsilon}\|e_t\|^2
+ \frac{\bar\rho^2}{2\epsilon}\Delta^2.
\]
We now make two parameter choices.
First, condition~\eqref{eq:h_condition} gives $\underline\nu - \bar\rho\,\delta(h) \ge \underline\nu/2$, so
\[
-\kappa - \lambda(\underline\nu-\bar\rho\,\delta(h)) + \epsilon
\le -\kappa - \tfrac{\lambda\underline\nu}{2} + \epsilon.
\]
Second, choosing $\epsilon = \kappa/2$ to retain half of the diffusive
dissipation yields
\[
V'(t)
\le -\bigl(\tfrac{\kappa}{2}+\tfrac{\lambda\underline\nu}{2}\bigr)\|\nabla e_t\|^2
+ \frac{C_{\mathrm{adv}}^2}{\kappa}\|e_t\|^2
+ \frac{\bar\rho^2}{\kappa}\Delta^2.
\]
Applying the Poincar\'e inequality~\eqref{eq:poincare_e} to the gradient term,
\[
V'(t)
\le -\underbrace{\left[\frac{1}{C_P}\Bigl(\frac{\kappa}{2}+\frac{\lambda\underline\nu}{2}\Bigr)
- \frac{C_{\mathrm{adv}}^2}{\kappa}\right]}_{=:\,\frac12\alpha}\|e_t\|^2
+ \frac{\bar\rho^2}{\kappa}\Delta^2.
\]
Condition~\eqref{eq:lambda_condition} ensures $\alpha>0$. Since $\|e_t\|^2 = 2V(t)$, we obtain
\[
V'(t) \le -\alpha\,V(t) + \frac{\bar\rho^2}{\kappa}\Delta^2.
\]
The bound~\eqref{eq:L2_bound} follows by Gronwall's inequality.
\end{proof}

Condition~\eqref{eq:h_condition} requires the observation bandwidth $h$ to be small enough so that the smoothing error does not overwhelm the density lower bound. Condition~\eqref{eq:lambda_condition} requires the nudging intensity $\lambda$ to be large enough to overcome the advection instability caused by model error and nonlinear transport. Together, they guarantee exponential convergence of the full $L^2$-error $\|\nu_t-\mu_t\|_{L^2}$ to a neighborhood of zero whose radius is controlled by the model bias~$\Delta$.

\section{Numerical experiments}

We evaluate the proposed multiscale nudging method on five examples of increasing complexity: a linear Gaussian benchmark, a bistable bimodal system, a mean-field Lorenz model, Vlasov–Poisson, and a real collective-motion dataset. In each case, we compare three evolutions: the reference dynamics, a biased forecast model without assimilation, and the assimilated forecast obtained by adding the nudging term. 

\subsection{Simple linear system}

As a first test, we consider the one-dimensional linear interacting particle system
\begin{equation}
    \intd X_t^i = -a\bigl(X_t^i - m_t\bigr)\,\intd t + \intd W_t^i,
\end{equation}
where
\[
m_t = \frac{1}{N}\sum_{i=1}^N X_t^i
\]
is the empirical mean. As \(N \to \infty\), the associated mean-field limit is given by
\[
\intd \bar X_t = -a\bigl(\bar X_t - \bar m_t\bigr)\,\intd t + \intd W_t,
\qquad
\bar m_t = \int x\,\mu_t(\intd x),
\]
where \(\mu_t = \mathrm{Law}(\bar X_t)\). The corresponding Fokker--Planck equation reads
\[
\partial_t \mu_t
= \partial_x\left(a\left(x-\int x\,\mu_t(\intd x)\right)\mu_t\right)
+ \frac{1}{2}\partial_{xx}\mu_t.
\]

This example is useful as a first test since the true law remains close to Gaussian and its variance provides an example of the distributional error. We generate the reference data using the true interaction coefficient \(a=1\) with initial condition \(X_0^i \sim \mathcal{N}(0,0.5)\). The particle system is integrated by the Euler--Maruyama scheme with time step \(0.01\) up to \(T=5\). Observations are produced from the smoothed density \(K_h * \mu_t\) with bandwidth \(h=0.5\). To test robustness with respect to model bias, the forecast model uses the same dynamics but with \(a \in \{0.5,2,5\}\).

Figure~\ref{fig:simple_linear_case_a_0.5} reports the variance of the trajectories for the under-interacting case \(a=0.5\). The four panels correspond to different numbers of nudging substeps. Without assimilation, the forecast systematically underestimates the growth of the variance and drifts away from the reference solution. When \(\lambda=1\), the feedback is too weak to noticeably change the forecast. Increasing the nudging strength to \(10\) and \(100\)  reduces the variance gap. The strongest correction, \(\lambda=1000\), gives the closest match once enough nudging substeps are applied, but with only one nudging update, it produces a sharp overshoot, illustrating the expected stability-accuracy trade-off. The same qualitative behavior persists for the over-interacting cases \(a=2\) and \(a=5\); see Appendix Figures~\ref{fig:simple_linear_case_a_2} and~\ref{fig:simple_linear_case_a_5}.

\begin{figure}[htbp]
    \centering
    \includegraphics[width=0.65\textwidth]{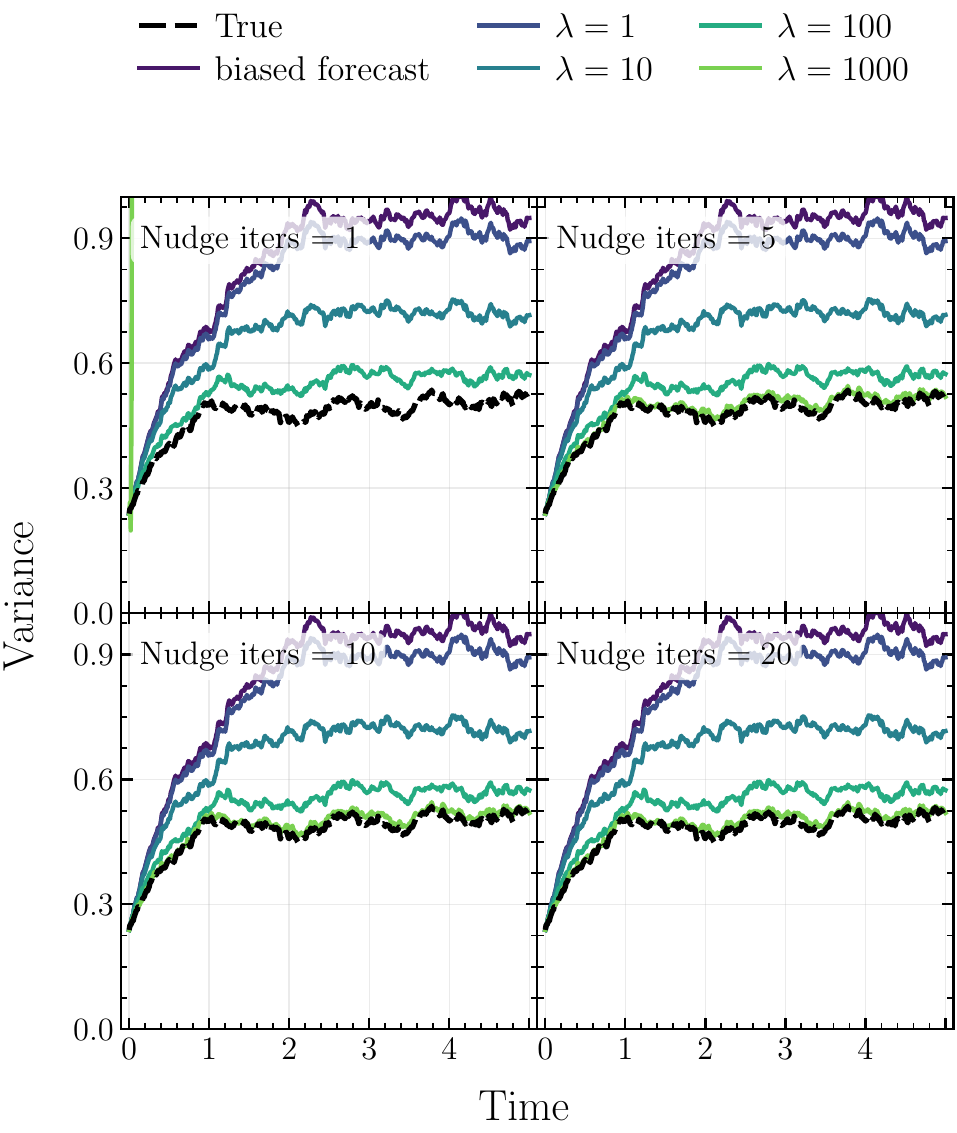}
    \caption{\textbf{Variance dynamics in the one-dimensional linear benchmark (\(a=0.5\)).} We compare the reference system, the biased forecast model, and assimilated (nudged) trajectories with \(\lambda\in\{1,10,100,1000\}\). The four panels correspond to different numbers of nudging substeps. Increasing \(\lambda\) improves tracking accuracy, while excessively large nudging can introduce temporary numerical instability when the correction is applied too aggressively.}
    \label{fig:simple_linear_case_a_0.5}
\end{figure}

To move beyond a single moment diagnostic, Figure~\ref{fig:simple_linear_case_density_a_0.5} compares the full space-time density for the true dynamics, the biased forecast, and two assimilated solutions. The biased forecast is overly diffuse and fails to reproduce the concentration near \(x=0\). A moderate correction (\(\lambda=10\)) partially restores the correct density profile, while the stronger correction (\(\lambda=1000\)) yields a space-time distribution much closer to the reference one. This confirms that the nudging term corrects the full law.

\begin{figure}[h]
    \centering
    \includegraphics[width=0.65\textwidth]{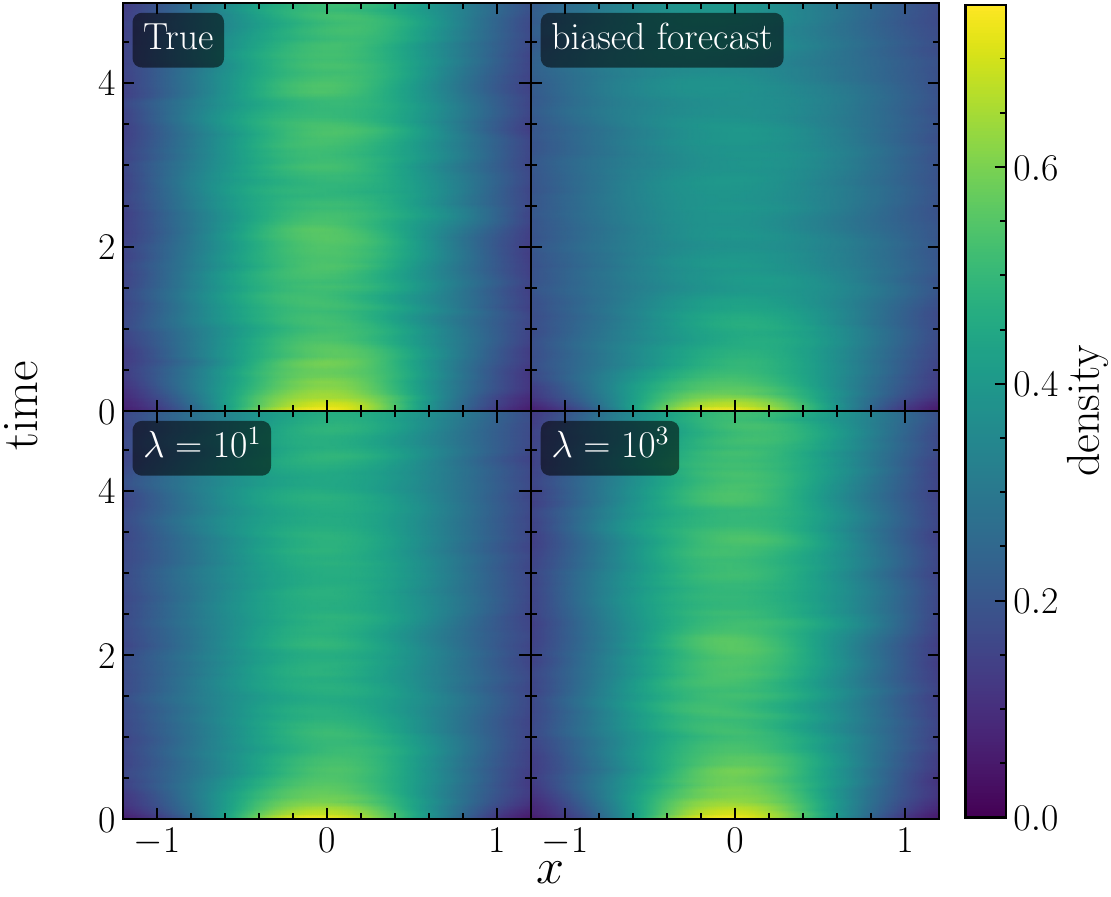}
    \caption{\textbf{Space--time density evolution in the linear benchmark (\(a=0.5\)).} Top-left: reference density. Top-right: biased forecast. Bottom-left: assimilated density with \(\lambda=10\). Bottom-right: assimilated density with \(\lambda=1000\). Larger nudging strength restores both the location and the spread of the true law.}
    \label{fig:simple_linear_case_density_a_0.5}
\end{figure}

Figure~\ref{fig:simple_linear_case_w2} summarizes the final-time and time-averaged \(W_2\) errors for all three biased models as functions of \(\lambda\). Both error metrics decrease nearly monotonically as the nudging strength increases, with the most visible gains occurring between \(10\) and \(100\). The reduction is consistent across both under-interacting and over-interacting forecast models, which supports the theoretical conclusion of Theorem~\ref{thm:main_kdenudge}: sufficiently strong observation feedback suppresses the error induced by model misspecification.

\begin{figure}
    \centering
    \includegraphics[width=0.85\linewidth]{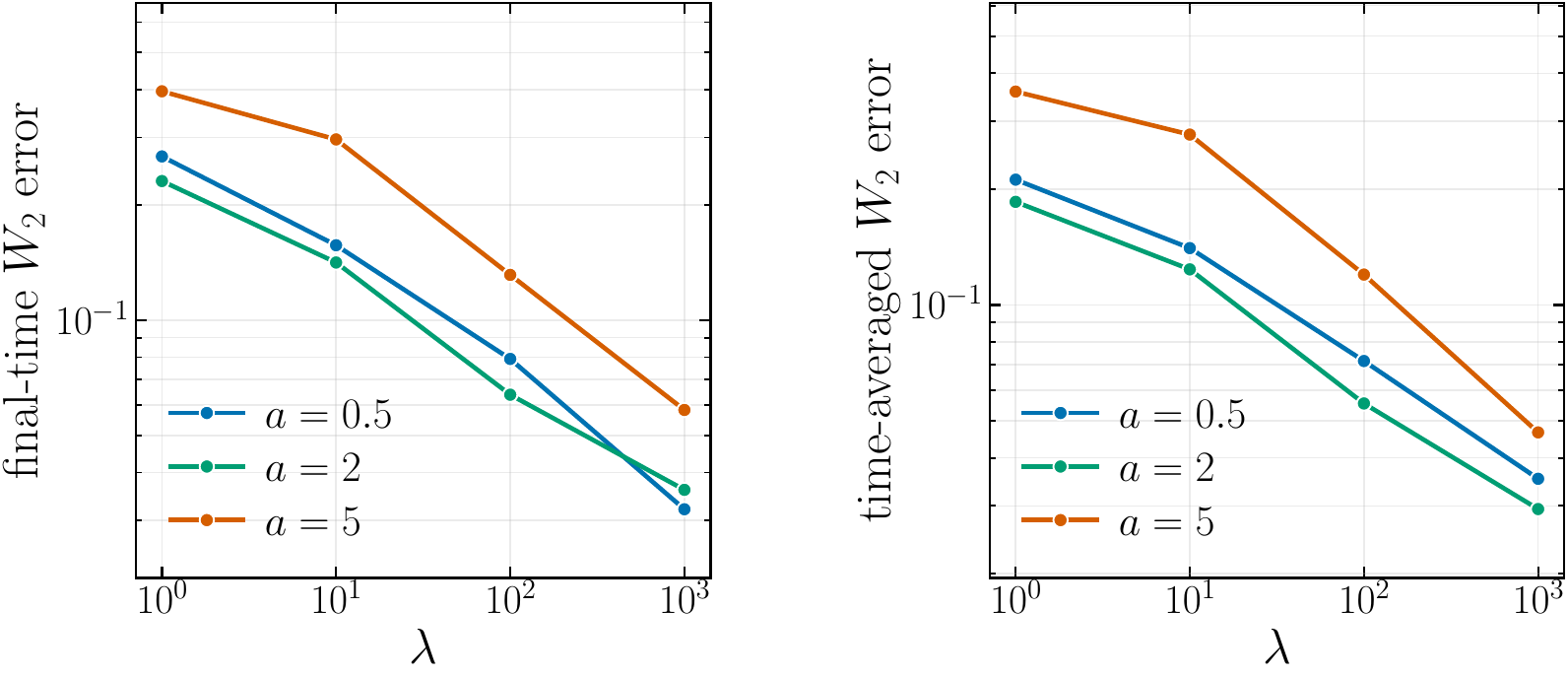}
    \caption{\textbf{Distributional error versus nudging strength in the linear benchmark.} Left: final-time \(W_2\) error. Right: time-averaged \(W_2\) error. In all three biased models \(a\in\{0.5,2,5\}\), stronger nudging produces a smaller distributional discrepancy with the reference dynamics.}
    \label{fig:simple_linear_case_w2}
\end{figure}

\subsection{Multimodal distribution}

We next consider a one-dimensional interacting particle system with a bistable confining potential,
\begin{equation}
    \intd X_t^i = -\nabla U(X_t^i)\,\intd t -a\bigl(X_t^i - m_t\bigr)\,\intd t + \intd W_t^i,
\end{equation}
where
\[
U(x) = \frac{x^4}{4} -\frac{x^2}{2}, \]
and where the empirical mean is
\[
m_t = \frac{1}{N}\sum_{i=1}^N X_t^i.
\]
As \(N \to \infty\), the associated mean-field limit is given by
\[
\intd \bar X_t = - (\bar{X}_t^3 - \bar{X}_t )\,\intd t -a\bigl(\bar X_t - \bar m_t\bigr)\,\intd t + \intd W_t,
\qquad
\bar m_t = \int x\,\mu_t(\intd x),
\]
where \(\mu_t = \mathrm{Law}(\bar X_t)\). The corresponding Fokker--Planck equation reads
\[
\partial_t \mu_t
= \partial_x\left( (x^3-x )\mu_t\right)
+ \partial_x\left( a\left(x-\int x\,\mu_t(\intd x)\right)\mu_t\right)
+ \frac{1}{2}\partial_{xx}\mu_t.
\]

This benchmark is more challenging than the linear Gaussian case since the double-well potential generates a  bimodal law with metastable transitions between the two wells. In this regime, matching only the mean or the variance is insufficient; two distributions can have similar low-order moments while placing mass in the wrong well. We therefore use this example to test whether the kernel-based nudging term can transfer probability mass across the barrier at \(x=0\), recover the correct modal locations, and preserve the relative weights of the two modes from smoothed macroscopic observations. In the experiments, we generate the reference data with the true interaction coefficient $a=0.25$ and initial position $X^i_0 \sim \mathcal{N}(0,0.5)$. We integrate the particle system using Euler-Maruyama with time step $\Delta t=0.01$ up to final time $T=5$. As before, observations are generated from the smoothed density $K_h*\mu_t$ with $h=0.5$, and the forecast model is misspecified by taking the interaction coefficient \(a\in\{0.1,0.5,1.5\}\). The quality of assimilation is evaluated through density snapshots together with the corresponding Wasserstein error. This example highlights that the proposed correction mechanism is not restricted to unimodal or near-Gaussian laws.

\begin{figure}[htbp]
    \centering
    \includegraphics[width=0.65\textwidth]{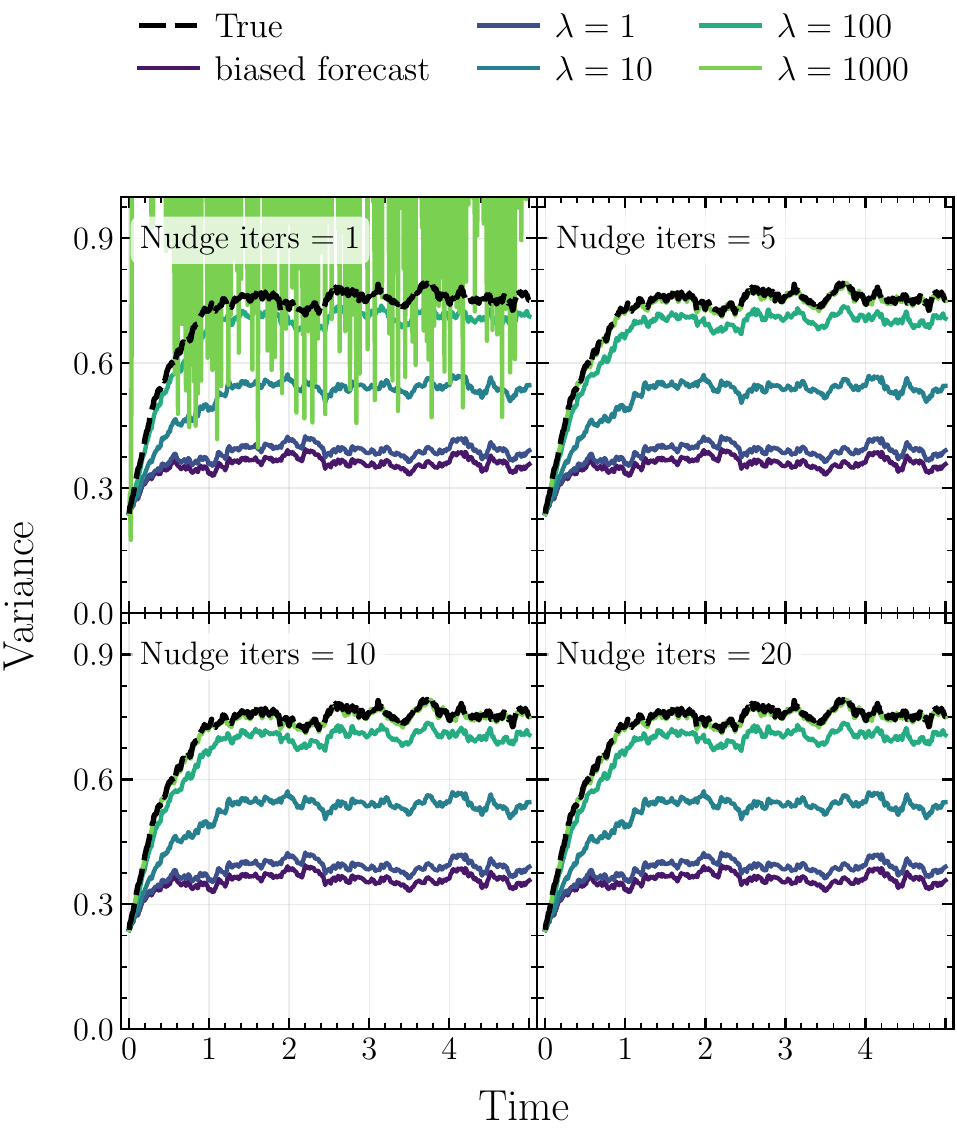}
    \caption{\textbf{Variance dynamics in the multimodal benchmark (\(a=1.5\)).} We compare the reference system, the biased forecast model, and assimilated (nudged) trajectories with \(\lambda\in\{1,10,100,1000\}\). The four panels correspond to different numbers of nudging substeps. The same stability-accuracy trade-off observed in Figure  ~\ref{fig:simple_linear_case_a_0.5} persists in this regime.}
    \label{fig:case2_true_0.25_a_1.5}
\end{figure}

Figure \ref{fig:case2_true_0.25_a_1.5} shows the variance dynamics for $a=1.5$ for four different nudging substeps. The biased forecast model performs worse than all assimilated forecast models. After enough nudging substeps, the assimilated model with nudging intensity $\lambda = 10^3$ produces the closest approximation to the reference solution, but with just one nudging iteration, the model overestimates the variance and generates large oscillations, as expected due to the trade-off between accuracy and stability.

\begin{figure}[h]
    \centering
    \includegraphics[width=0.65\textwidth]{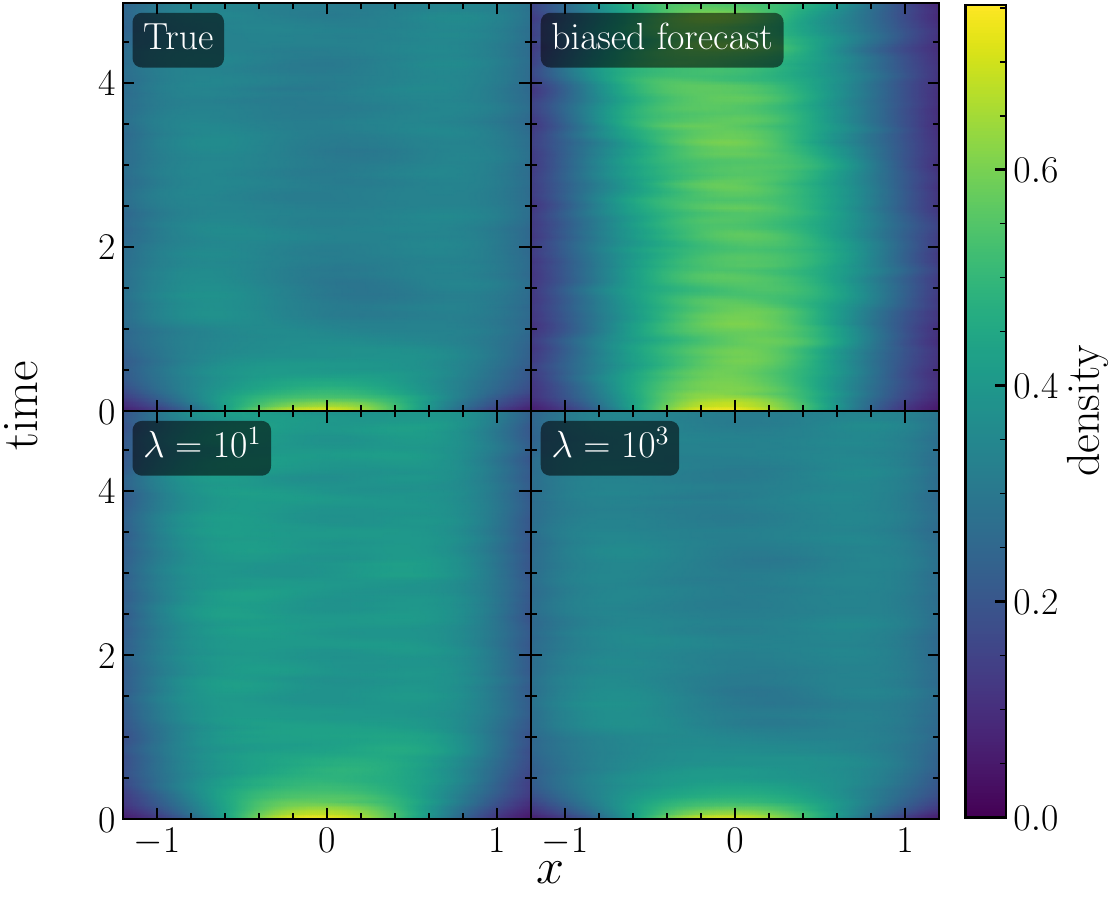}
    \caption{\textbf{Space--time density evolution in the multimodal benchmark (\(a=1.5\)).} Top-left: reference density. Top-right: biased forecast. Bottom-left: assimilated density with \(\lambda=10\). Bottom-right: assimilated density with \(\lambda=1000\). Larger nudging strength restores both the location and the spread of the true law.}
    \label{fig:case2_density_true_0.25_a_1.5}
\end{figure}

In Figure \ref{fig:case2_density_true_0.25_a_1.5}, the space-time densities for the true reference solution, biased forecast, and two assimilated forecasts with $a=1.5$ are shown. The prediction from the biased model is overly concentrated around $x=0$ and does not capture the density profile of the true solution, which has two high probability regions. The assimilated model with $\lambda = 10$ moderately diffuses the concentration near $x=0$ and partially resolves the two modes, whereas the model with $\lambda = 10^3$ achieves the closest match to the reference density and clearly shows the two stable regions of high probability.

\begin{figure}
    \centering
\includegraphics[width=0.85\textwidth]{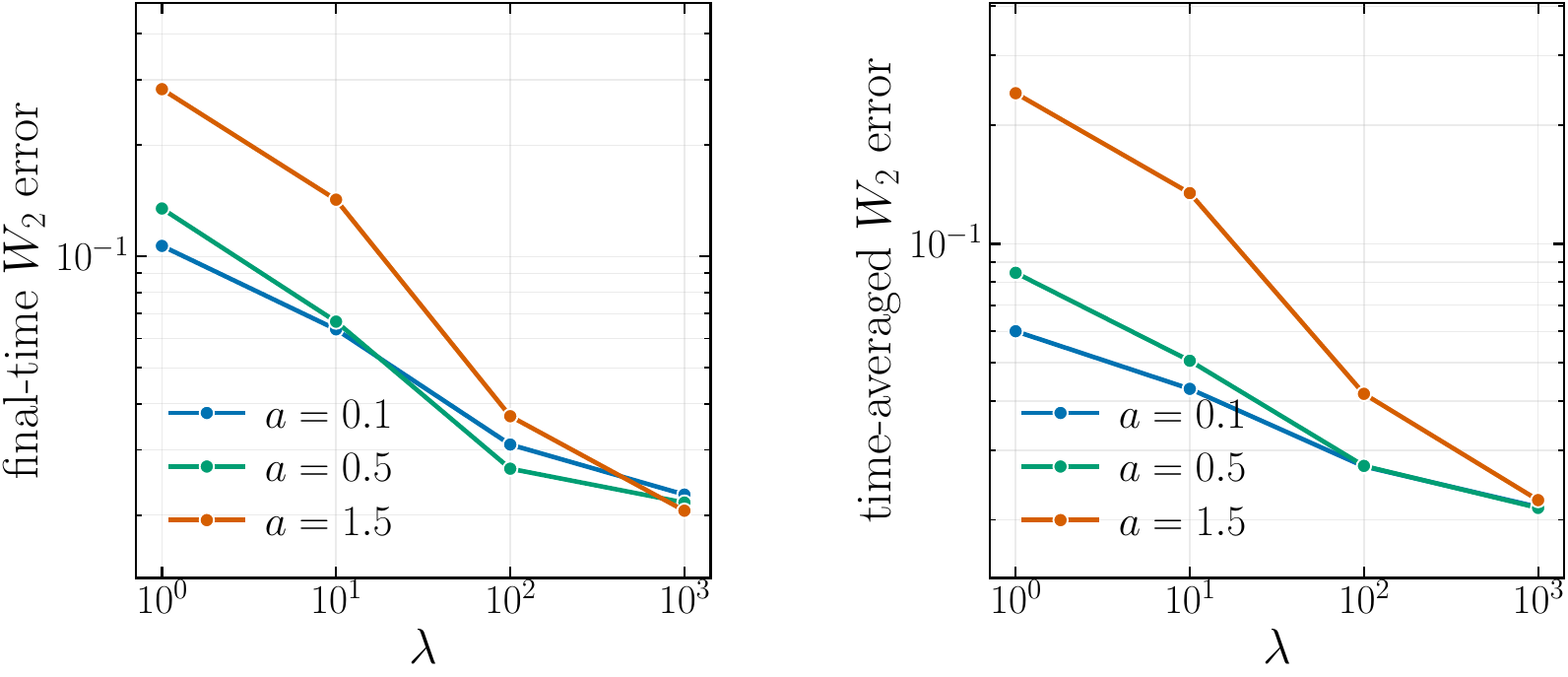}
    \caption{\textbf{Distributional error versus nudging strength in the multimodal benchmark.} Left: final-time \(W_2\) error. Right: time-averaged \(W_2\) error. In all three biased models \(a\in\{0.1,0.5,1.5\}\), stronger nudging produces a smaller distributional discrepancy with the reference dynamics.}
    \label{fig:case2_true_0.25_w2}
\end{figure}

Figure \ref{fig:case2_true_0.25_w2} displays the final-time and time-averaged $W_2$ errors for the forecast models with $a\in \{0.1,0.5,1.5\}$ as functions of the nudging intensity $\lambda$. As $\lambda$ increases, both errors decrease monotonically across all three biased models, which is consistent with the conclusion of Theorem~\ref{thm:main_kdenudge}. 
Additional experiments with $a=0.1$ and $a=0.5$ show similar qualitative behavior; see Appendix Figures \ref{fig:case2_true_0.25_a_0.1}, \ref{fig:case2_true_0.25_a_0.5}, \ref{fig:case2_density_true_0.25_a_0.1}, and \ref{fig:case2_density_true_0.25_a_0.5}.

\subsection{Lorenz dynamics}

We further consider a mean-field Lorenz system
\[
\begin{aligned}
          \intd X^i_t =& \sigma_L (m_{t,y} - X^i_t)\,\intd t + \sigma \,\intd W_t^{x,1},\\
         \intd Y^i_t =& [m_{t,x} (\rho - m_{t,z}) - Y^i_t]\,\intd t + \sigma \,\intd W_t^{y,1},\\
         \intd Z^i_t =& [m_{t,x} m_{t,y} - \beta Z^i_t]\,\intd t + \sigma \,\intd W_t^{z,1},   
\end{aligned}
\]
where \(m_{t,x}\), \(m_{t,y}\), and \(m_{t,z}\) denote the empirical means of the three coordinates. As a biased forecast model, we use \(N\) individual Lorenz systems without the mean-field coupling,
\[
\begin{aligned}
          \intd X^i_t =& \sigma_L (Y^i_t - X^i_t)\,\intd t + \sigma \,\intd W_t^{x,1},\\
         \intd Y^i_t =& [X^i_t  (\rho - Z^i_t) - Y^i_t]\,\intd t + \sigma \,\intd W_t^{y,1},\\
         \intd Z^i_t =& [X^i_t  Y^i_t - \beta Z^i_t]\,\intd t + \sigma \,\intd W_t^{z,1}.   
\end{aligned}
\]
We use the classical Lorenz parameters
\[
\sigma_L=10,\qquad \rho=28,\qquad \beta=8/3,
\]
with noise amplitude \(\sigma_{\rm noise}=1\), time step \(\Delta t=0.01\), particle number \(N=1000\), and kernel bandwidth \(h=0.5\).
This example probes a strongly nonlinear and chaotic regime in which small modeling errors amplify rapidly. Figure~\ref{fig:lorenz} compares the mean trajectory \((m_{t,x},m_{t,y},m_{t,z})\) of the assimilated system with the reference attractor for \(\lambda\in\{1000,100,10\}\). When \(\lambda=1000\), the corrected trajectory remains close to the true attractor and the mean error stays uniformly small. For \(\lambda=100\), the dynamics still capture the correct global structure, but a visible phase discrepancy remains. When \(\lambda=10\), the correction is too weak and the bottom panel shows repeated spikes in the mean-trajectory error, corresponding to intermittent departures from the correct lobe of the attractor. These results show that density-level feedback can stabilize a biased microscopic forecast even in a chaotic setting.

\begin{figure}
    \centering
    \includegraphics[width=0.95\linewidth]{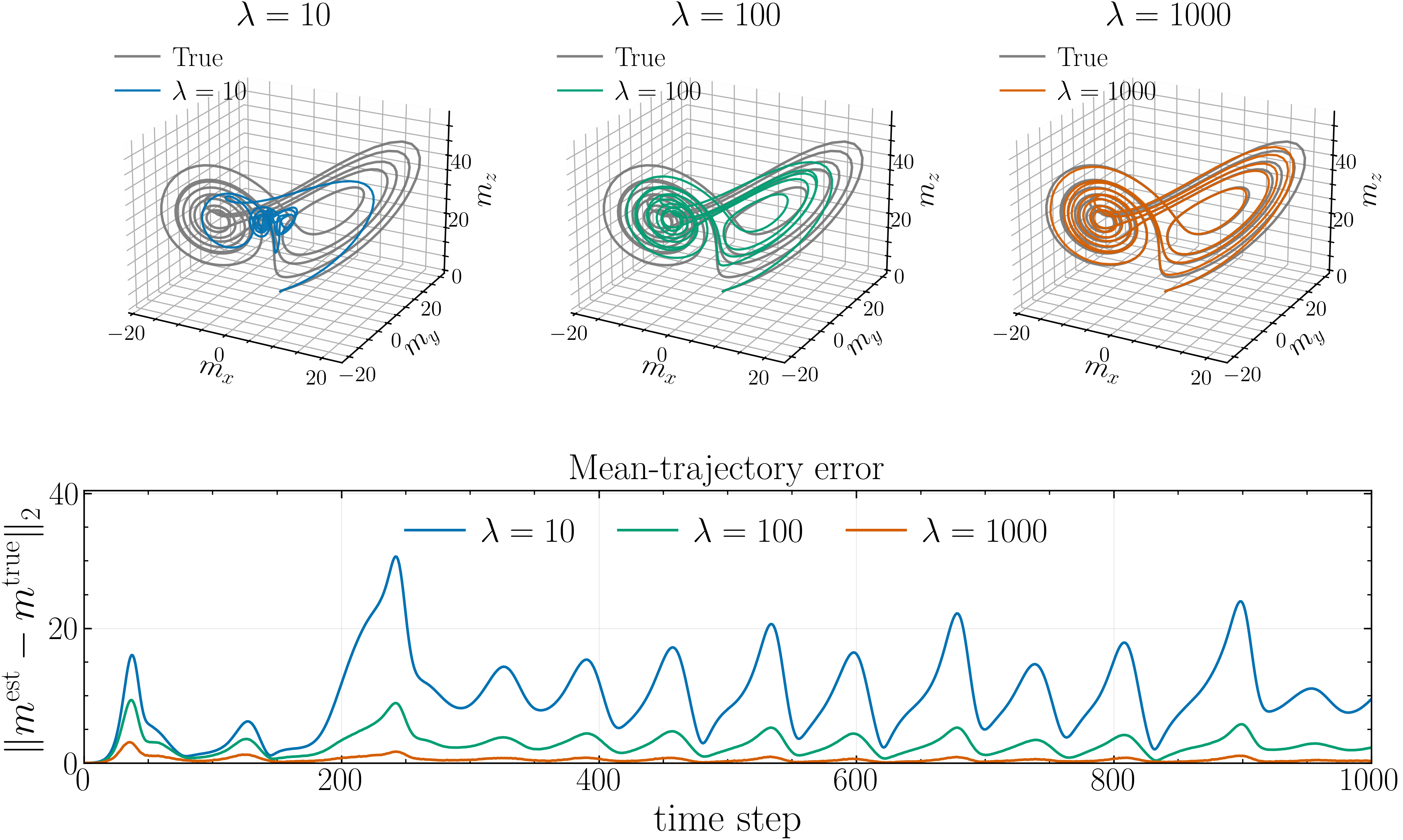}
    \caption{\textbf{Mean-field Lorenz test.} Top: trajectories of the mean state \((m_{t,x},m_{t,y},m_{t,z})\) for \(\lambda=1000,100,10\) compared with the reference attractor. Bottom: time series of the mean-trajectory error. Stronger nudging keeps the corrected dynamics close to the true attractor and suppresses intermittent excursions.}
    \label{fig:lorenz}
\end{figure}

\subsection{Vlasov--Poisson equation}
We next consider the one-dimensional Vlasov--Poisson system
\[
\begin{aligned}
    \intd X_t^i &= V_t^i \,\intd t,\\
    \intd V_t^i &= E(X_t^i,t)\,\intd t,
\end{aligned}
\]
where the self-consistent electric field is given by
\[
E(x,t) = - \nabla_x \phi(x,t),
\]
and the electrostatic potential $\phi$ solves the Poisson equation
\[
-\Delta \phi = \rho - 1, 
\qquad 
\rho(x,t) = \int f(x,v,t)\,\intd v.
\]
In these Vlasov-Poisson experiments, observations are taken on the full
phase space. Namely, for \(z=(x,v)\), we use the smoothed phase-space density
\[
    y_t(z)=(K_h*f_t)(z),
\]
and compute the nudging residual between this observation and the smoothed
forecast density.  Although \(\rho_t\) determines the electric
field through Poisson's equation, it does not uniquely determine the full
kinetic distribution \(f(x,v,t)\). The spatial-marginal observation problem is
therefore left to future work.
\paragraph{Landau damping}
We first consider the classical Landau damping problem. The ground-truth dynamics are initialized with
\[
f(x,v,0) \propto f_{M,1}(v)\bigl(1+\varepsilon \cos(\kappa x)\bigr), 
\qquad 
f_{M,\sigma}(v) = \frac{1}{\sqrt{2\pi}\sigma}\exp\left(-\frac{v^2}{2\sigma^2}\right),
\]
which corresponds to a Maxwellian velocity distribution perturbed by a spatial density modulation. In our experiments, we take $\varepsilon = 0.5$ and $\kappa = \tfrac{1}{2}$.
The spatial domain is periodic with period $4\pi$, i.e., $x \in [0,4\pi]$, while the velocity domain is truncated to $v \in [-6,6]$ for numerical implementation.

To quantify the damping, we monitor the amplitude of the fundamental
Fourier mode of the electric field,
\[
E_k(t) = \left|\hat{E}(k,t)\right|,
\qquad
\hat{E}(k,t) = \frac{1}{L}\int_0^{L} E(x,t)\,e^{-ikx}\,\intd x,
\]
with $k=\kappa$ and $L=4\pi$. In the linear regime, $E_k$ decays
exponentially at the Landau damping rate.

The forecast model uses the same dynamical system but is initialized
from an unperturbed Maxwellian ($\varepsilon=0$), resulting in a biased
prediction that lacks the initial density modulation.
Figure~\ref{fig:landau_damping} compares the reference solution, the
biased forecast, and the assimilated dynamics. The results show that the
nudging term successfully reconstructs the damping behavior and recovers
the correct exponential decay of~$E_k$.
\begin{figure}
    \centering
     \includegraphics[width=0.5\linewidth]{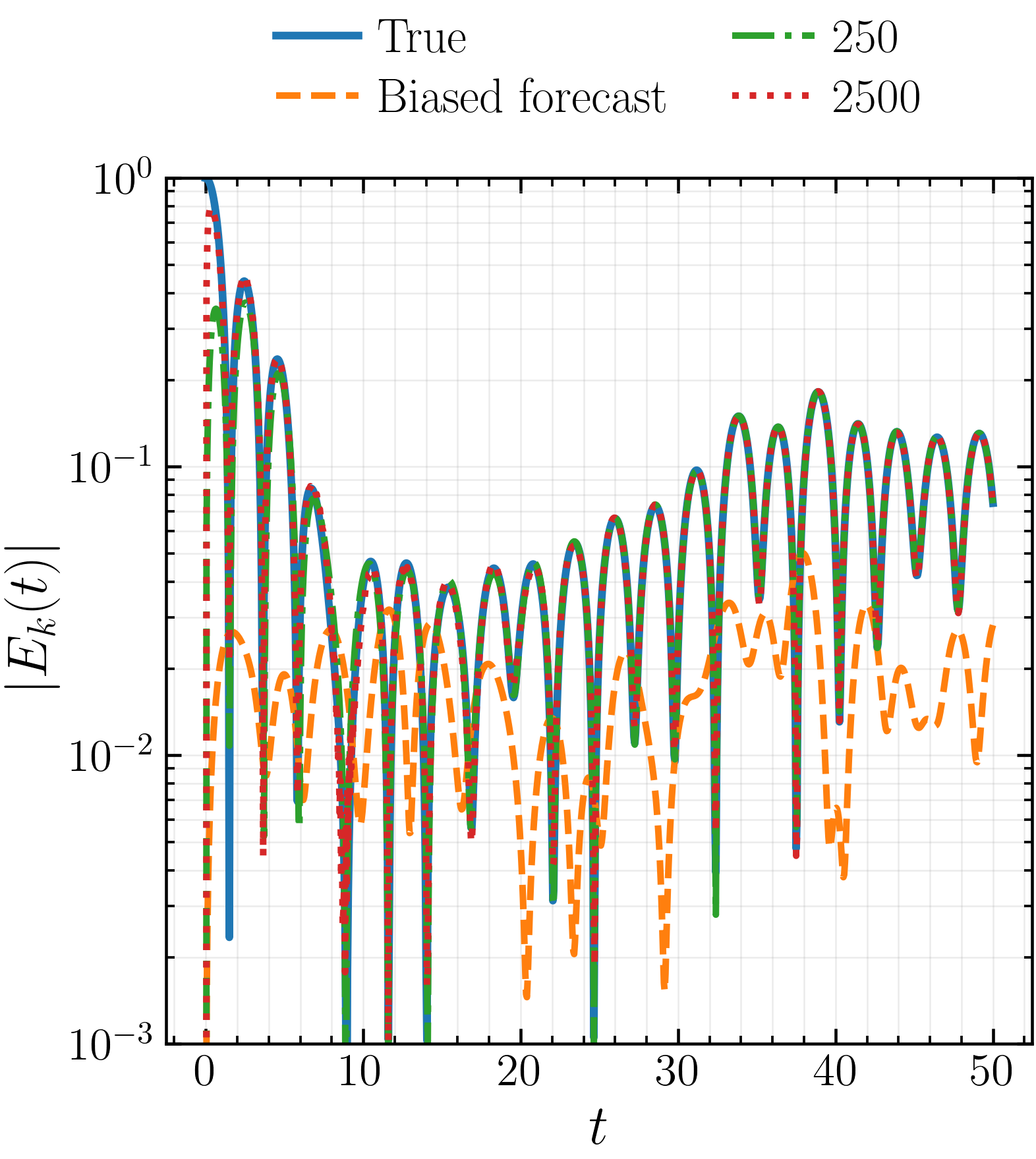}
    \caption{\textbf{Landau damping.} Time evolution of the fundamental electric-field mode 
$|E_k(t)|$ for the reference dynamics, the biased forecast, and the assimilated forecasts.
The nudged systems recover the damping profile from density-level observations.}
    \label{fig:landau_damping}
\end{figure}

\paragraph{Two-stream instability}
We also consider the two-stream instability, which exhibits strongly nonlinear behavior. The ground-truth initial condition is given by
\[
f(x,v,0) \propto \bigl(0.5 f_{M,\sigma}(v-u_0) + 0.5 f_{M,\sigma}(v+u_0)\bigr)\bigl(1+\varepsilon \cos(\kappa x)\bigr),
\]
with parameters $\sigma=0.2$, $u_0=1$, $\varepsilon=0.01$, and $\kappa=0.5$. This corresponds to a bimodal velocity distribution with a small spatial perturbation. In contrast, the forecast model is initialized from a single Maxwellian profile,
\[
f(x,v,0) \propto f_{M,1}(v)\bigl(1+\varepsilon \cos(\kappa x)\bigr),
\]
which fails to capture the underlying two-stream structure.

Figure~\ref{fig:two_stream} shows the evolution of the system at different times. The biased model is unable to reproduce the filamentation and phase-space structures characteristic of the instability. In contrast, the assimilated dynamics recover the correct qualitative features, demonstrating that the proposed nudging mechanism can reconstruct complex multimodal structures from macroscopic observations.

\begin{figure}
    \centering
    \includegraphics[width=0.49\linewidth]{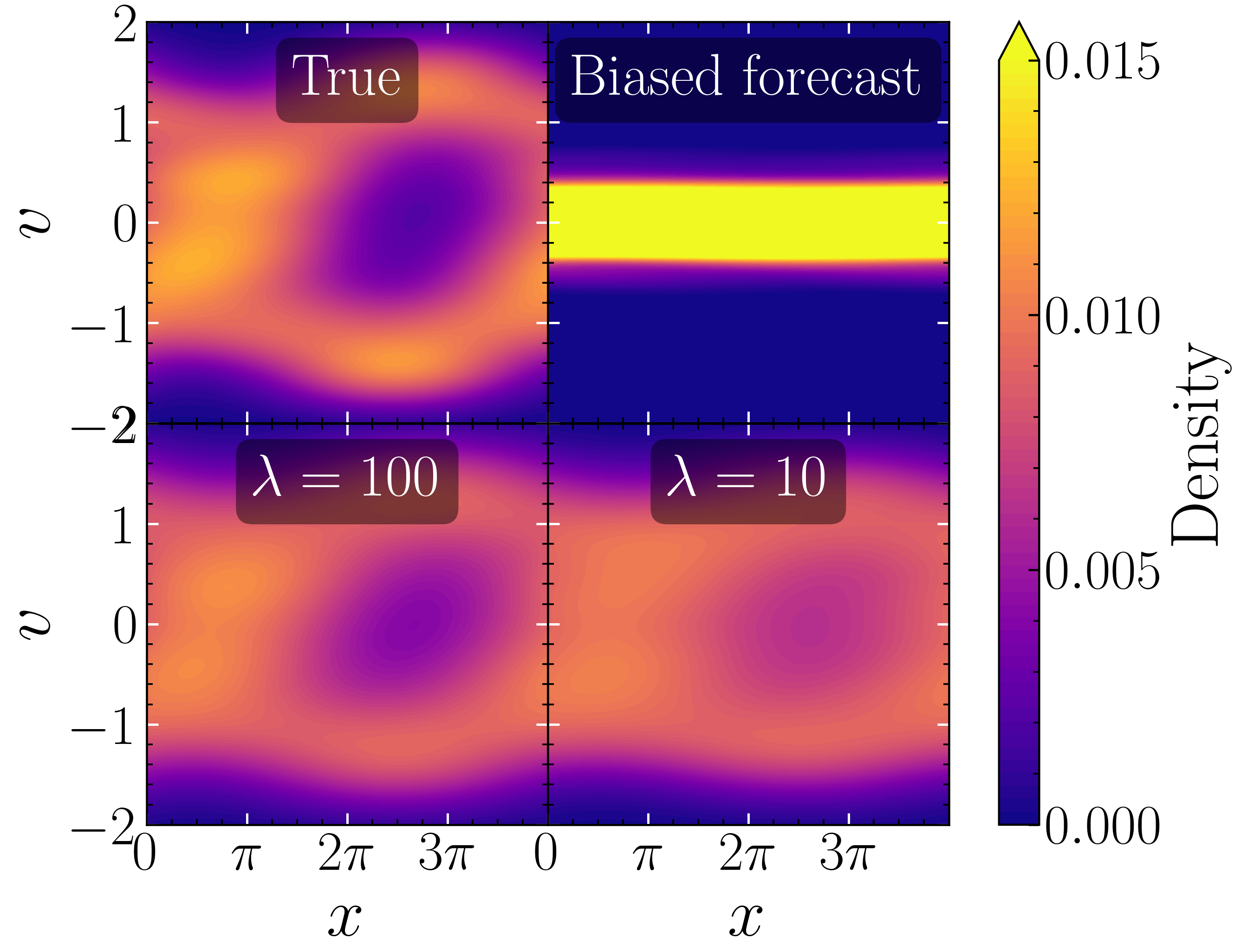}
    \includegraphics[width=0.49\linewidth]{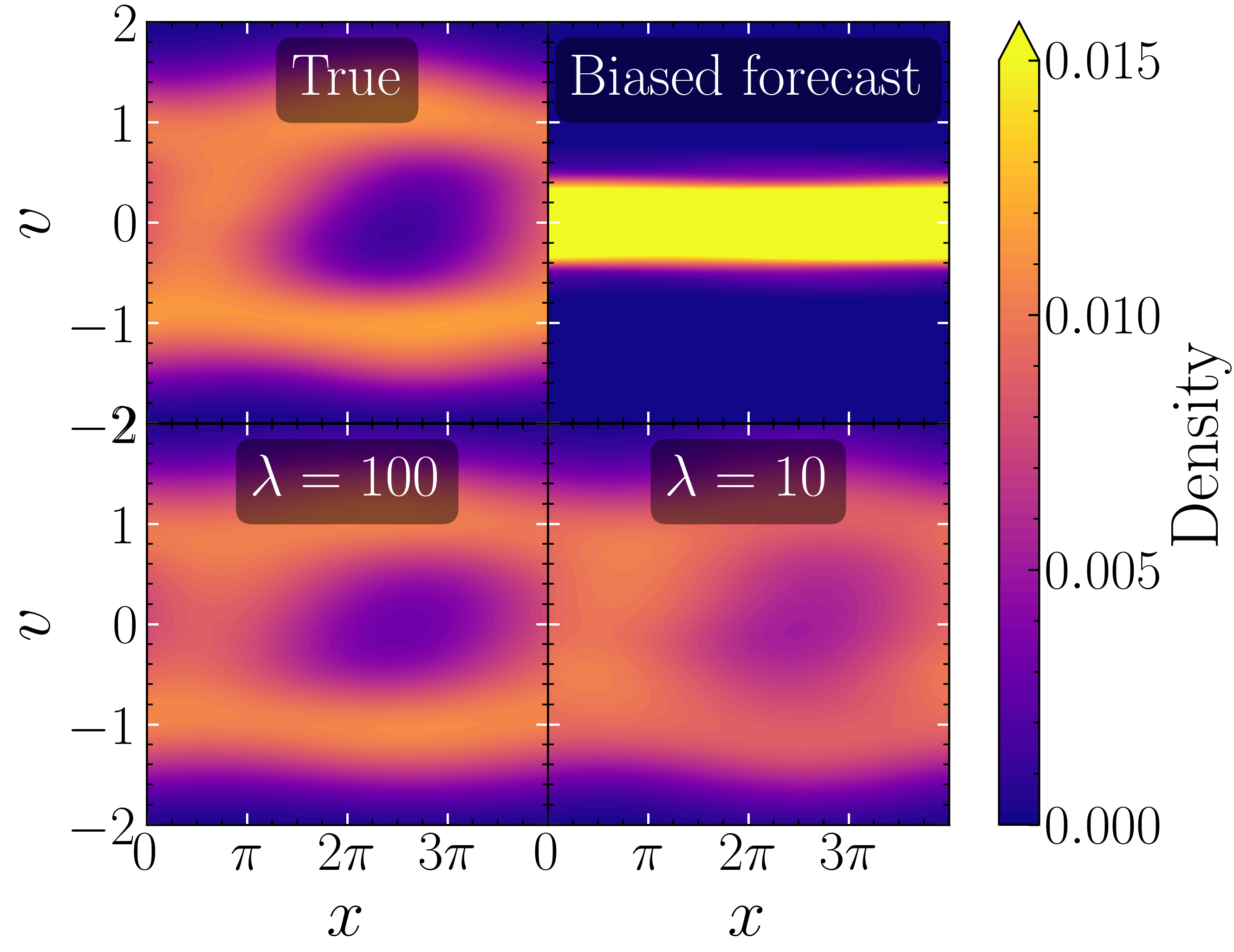}
    \caption{\textbf{Two-stream instability.} Phase-space density at
intermediate (left, step 1000) and late (right, step 2000) times. Each
panel compares the reference two-stream dynamics (top), the biased
single-Maxwellian forecast (center), and the assimilated solution
(bottom). The nudging recovers the filamentation and vortex-merging
structures absent from the biased model.}
\label{fig:two_stream}
\end{figure}

\subsection{Collective-motion data}
\label{subsec:fish}

We apply Multiscale Nudging to trajectory data obtained from
experiments with approximately $N\approx1126$ fish swimming in a
quasi-two-dimensional tank, processed by the TREX
tracker~\cite{walter2021trex}. The dataset consists of discrete-time
position measurements $\{\mathbf{X}_i(t_n)\}_{n=0}^{T}$, which are spatially
and temporally incomplete due to visual occlusions and tracking dropouts;
at every frame, a substantial fraction of trajectories carry a
missing flag, and identities are not consistent across
frames~\cite{walter2021trex}. Exact particle-to-particle correspondence
is therefore unavailable, which makes a permutation-invariant,
density-level assimilation strategy the natural choice.

Since the underlying physical interactions among the fish are not
known in closed form, we follow~\cite{lyu2026mvnn} and use a
learned mean-field drift
\begin{equation}
  \mathbf{b}_{\mathrm{model}}(\mathbf{x},\nu)
  \;=\;
  \mathrm{MLP}_{\theta_2}\bigl(\mathbf{x},\, \langle
       \mathrm{MLP}_{\theta_1}(\cdot)\rangle_{\nu}\bigr)
  \label{eq:fish_model}
\end{equation}
trained on a held-out portion of the same dataset. Two modest MLPs
($[12,24,48]$ and $[128,128,128,2]$) are composed as a feature network and a descriptor network, with the inner empirical mean providing the mean-field
coupling. The trained drift captures the average circulating motion of
the school of fish but, as we show below, accumulates errors when rolled out
without observational feedback.

The observation operator $\mathcal{O}_h$ is the same Gaussian KDE used throughout this paper; at every observed frame we evaluate the smoothed
density $\mu^{\mathrm{obs}}_t = K_h*\hat{\mu}^{\mathrm{TREX}}_t$ on a
$125\times125$ grid covering the $[0,L]^2$ tank
($L=128\,\mathrm{px}$) with bandwidth $h=2\,\mathrm{px}$ and
exclude particles flagged as missing from the empirical measure used to
form $\mu^{\mathrm{obs}}_t$. The assimilated forecast
is integrated by Euler--Maruyama with $\Delta t=0.025\,\mathrm{s}$
and $L_{\mathrm{nud}}=100$ inner nudging substeps per outer step;
the nudging strength is $\lambda=1$. We compare three trajectories
sharing the same initial configuration $\mathbf{Z}_0=\mathbf{X}(t_0)$:
the experimental reference, a biased forecast obtained by rolling
out~\eqref{eq:fish_model} without any observation feedback, and the
Multiscale Nudging forecast obtained from the smoothed-density
observations.

Figure~\ref{fig:fish_particles} compares the raw image, the reference
ensemble, the biased forecast, and the Multiscale Nudging forecast at four
snapshots $t\in\{0,\,1.25,\,3.75,\,6.25\}\,\mathrm{s}$. Each particle
is rendered as a small line segment oriented along its instantaneous
velocity, so that the ring-shaped circulation pattern of the school is
directly visible. The reference school maintains a coherent
counterclockwise circulation along the tank boundary throughout the
window; the biased forecast captures the initial pattern but loses
coherence quickly. In particular, by $t=3.75\,\mathrm{s}$, the rotational structure
is fragmented, and spurious particles concentrate near the upper-left
corner. The Multiscale Nudging forecast tracks both the boundary
circulation and the central low-density core for the full window.
The same comparison at the level of the smoothed density is shown in
Figure~\ref{fig:fish_density}. The reference field exhibits a
characteristic ring of high occupancy along the tank boundary together
with a structured interior. The biased forecast develops a spurious
concentration in the upper-left corner at $t=1.25\,\mathrm{s}$
and never recovers the boundary ring, whereas the assimilated forecast
matches the reference density throughout.

To quantify the assimilation, we compute the $L^2$ error of the density:
\[
  \mathrm{Err}_t \;=\;
  \bigl\| K_h*\hat{\mu}_t^{\mathrm{forecast}}
        - K_h*\hat{\mu}_t^{\mathrm{ref}} \bigr\|_{L^2(\Omega)},
\]
where the empirical measures use only the unmasked particles at each
frame. Figure~\ref{fig:fish_l2} reports $\mathrm{Err}_t$ for the biased
and assimilated forecasts on a logarithmic scale. The biased forecast
saturates near $\mathrm{Err}\approx30$ within roughly one second,
whereas the Multiscale Nudging forecast remains close to
$\mathrm{Err}\approx2$ for the entire window; the assimilated error
stays approximately one order of magnitude below the biased baseline.
\begin{figure}[t]
  \centering
  \includegraphics[width=0.96\linewidth]{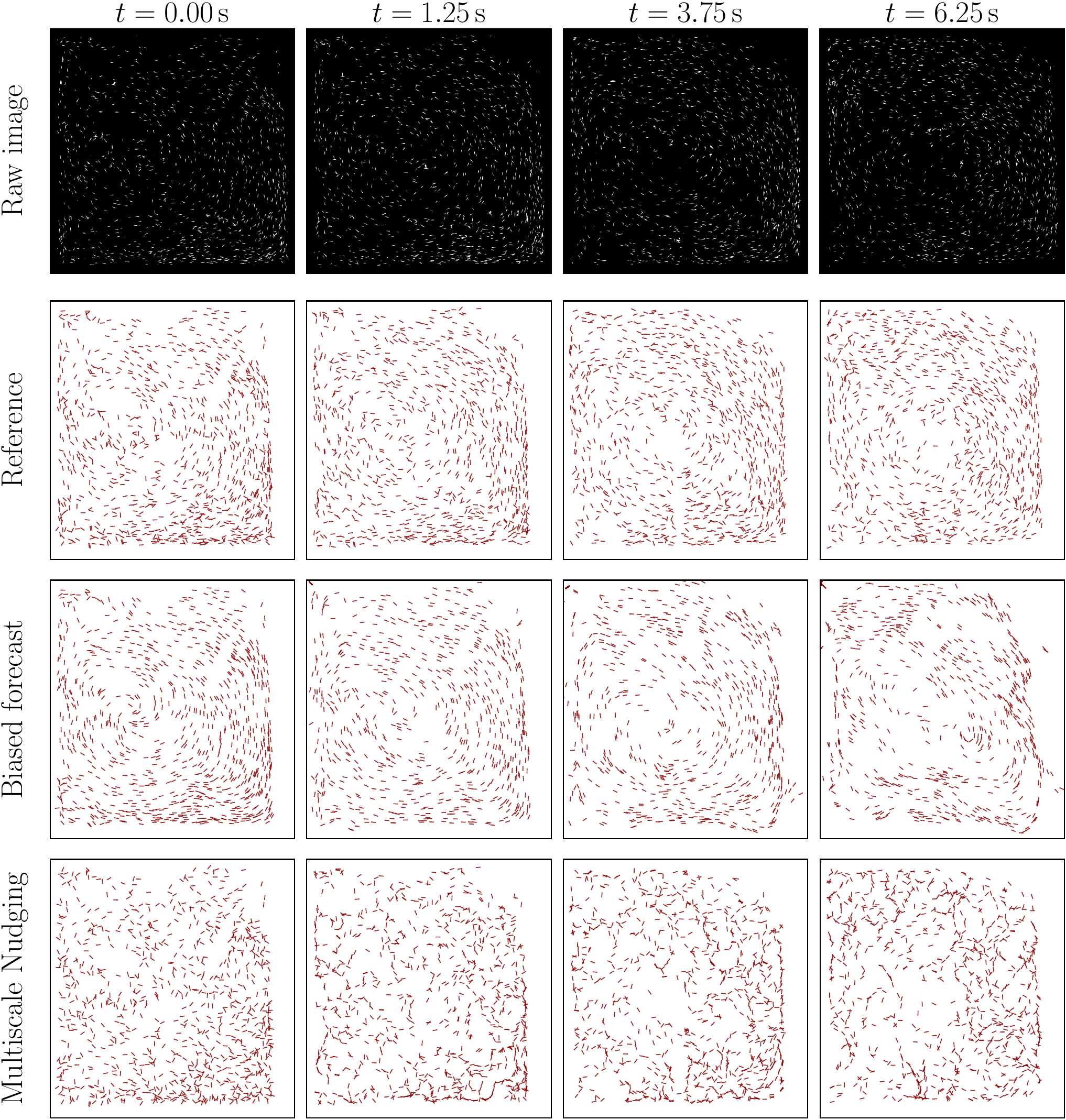}
  \caption{\textbf{Particle-level comparison on the fish dataset.}
   Rows, top to bottom: raw image frame, reference
   ensemble, biased forecast (learned drift, no observation feedback),
   and Multiscale Nudging. Columns: $t = 0,\,1.25,\,3.75,\,6.25\,$s after
   the common initial condition. Each particle is shown as a short
   segment oriented along its instantaneous velocity. The biased
   forecast loses the boundary ring and accumulates spurious mass in
   the upper-left corner; the Multiscale Nudging forecast preserves the
   circulation pattern of the school.}
  \label{fig:fish_particles}
\end{figure}

\begin{figure}[t]
  \centering
  \includegraphics[width=0.96\linewidth]{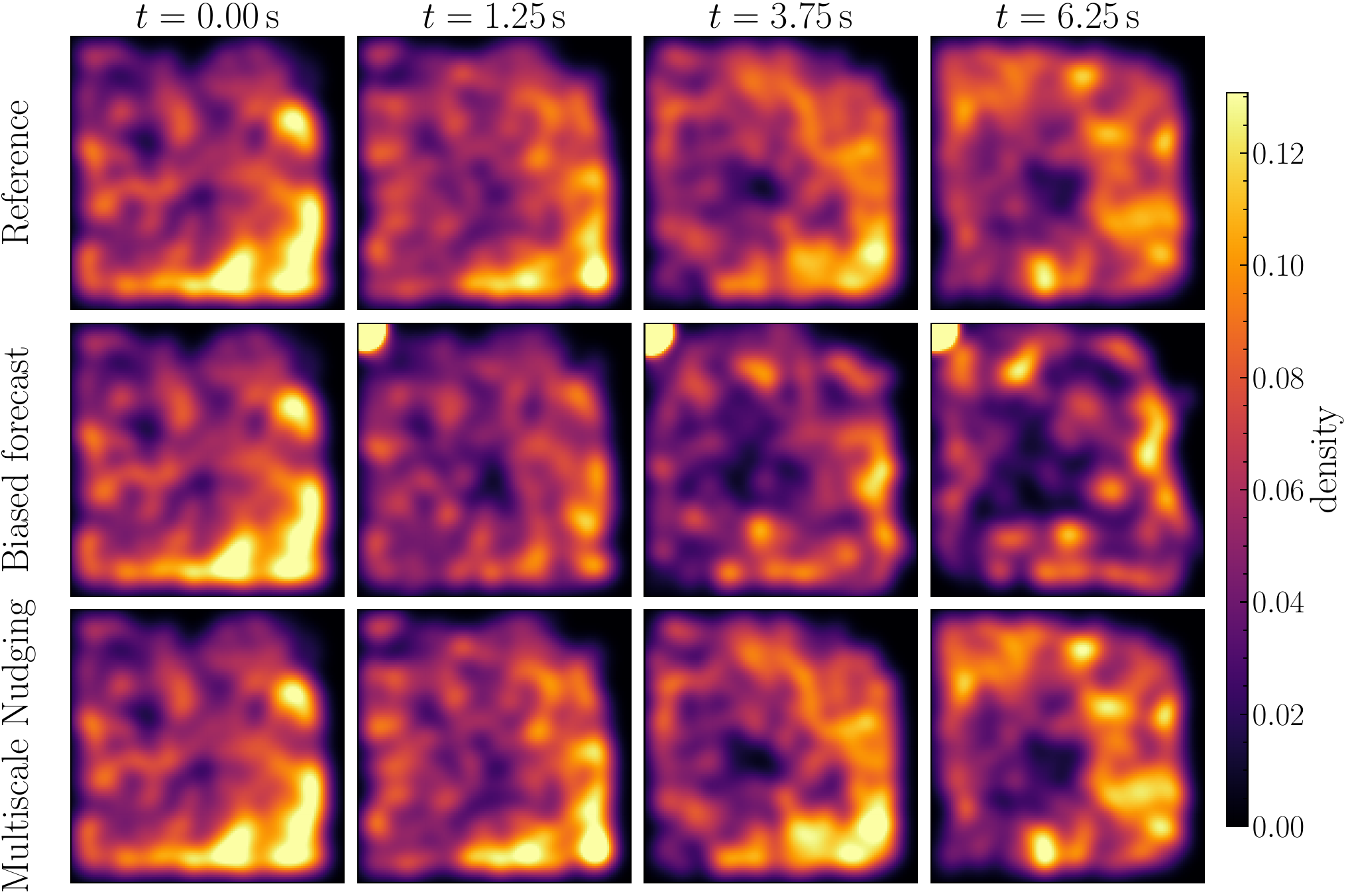}
  \caption{\textbf{Smoothed-density comparison on the fish dataset.}
   Rows: reference density, biased forecast density, Multiscale Nudging
   density. Columns: $t = 0,\,1.25,\,3.75,\,6.25\,$s. Densities are
   evaluated by Gaussian KDE with bandwidth $h=2\,$px on a
   $125\times125$ grid.
   The biased forecast develops a singular concentration in the
   upper-left corner; the assimilated forecast tracks the reference
   throughout.}
  \label{fig:fish_density}
\end{figure}

\begin{figure}[t]
  \centering
  \includegraphics[width=0.6\linewidth]{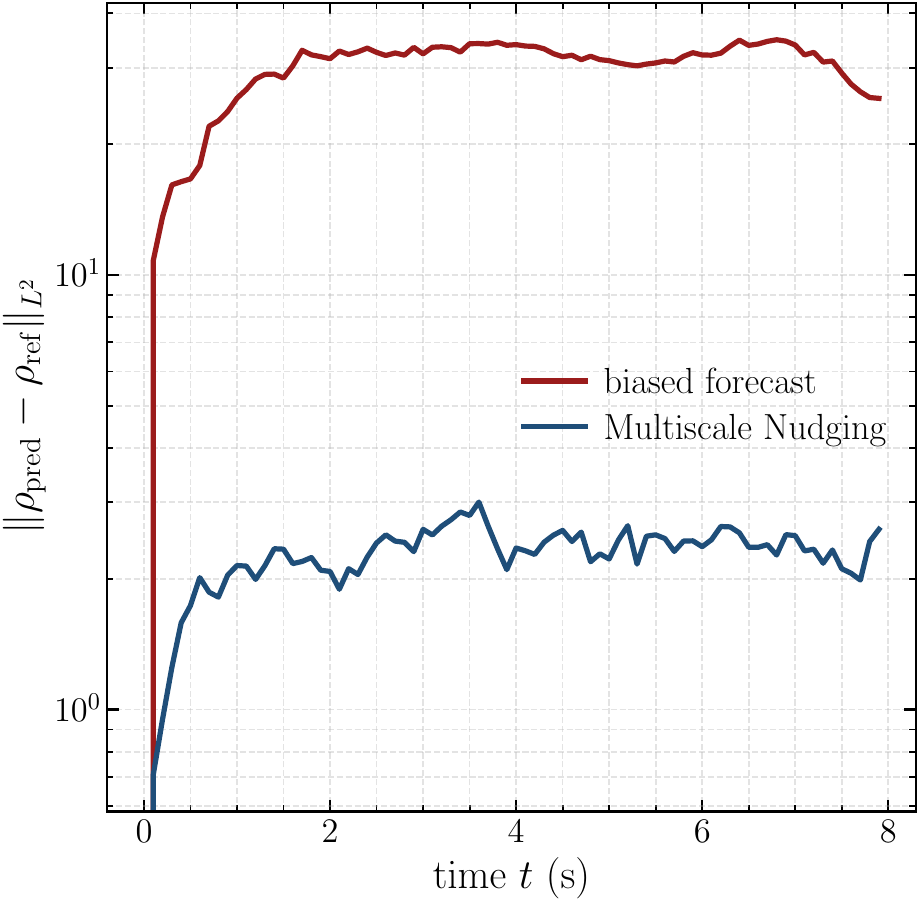}
  \caption{\textbf{Density $L^2$ error on the fish dataset.}
   Dynamics of
   $\| K_h*\hat{\mu}_t^{\mathrm{forecast}}
       - K_h*\hat{\mu}_t^{\mathrm{ref}}\|_{L^2}$ for the biased
   forecast (red) and Multiscale Nudging (blue). The biased forecast
   saturates near $30$ within $\sim 1\,$s; the assimilated forecast
   stays approximately one order of magnitude lower.}
  \label{fig:fish_l2}
\end{figure}

\section{Conclusion}
We introduced Multiscale Nudging, a measure-based method for assimilating coarse observations into microscopic mean-field particle dynamics. The method addresses a representation mismatch that arises when the forecast is a labeled particle ensemble, but the data are available only as smoothed, permutation-invariant densities. By defining the observation mismatch on probability measures and applying its Wasserstein gradient at the particle level, the method produces a practical feedback correction without particle matching, model linearization, or ensemble covariance estimation. For a fixed observation scale, we established well-posedness of the assimilated McKean-Vlasov dynamics and propagation of chaos for the particle approximation. We also proved an \(L^2\)-stability estimate under exact smoothed observations and a kernel-scale observability condition, showing exponential convergence up to a model-error-dependent bias floor. Numerical results across Gaussian, multimodal, chaotic, kinetic, and experimental collective-motion systems show that the approach can correct biased microscopic forecasts using only density-level information.

Several limitations remain. The present stability result assumes exact observations and requires the error to be observable at the kernel scale. In problems with hidden variables, such as kinetic systems observed only through spatial density, some phase-space structures may be only partially identifiable. Extending the method to noisy observations, nonlinear observation operators, adaptive bandwidths, and partial observability is a natural direction for future work.

\section*{Acknowledgments}
 This work used Anvil at Purdue through allocation MTH260007 from the Advanced Cyberinfrastructure Coordination Ecosystem: Services \& Support (ACCESS) program\cite{boerner2023access}, which is supported by U.S. National Science Foundation grants \#2138259, \#2138286, \#2138307, \#2137603, and \#2138296.
\bibliographystyle{unsrt}  
\bibliography{main}  

\appendix

\appendix
\section{Proof of Lemma \ref{lemma:kernel}}
\label{app:proof_kernel}
\begin{proof}
We first prove the regularity statements. The periodized Gaussian
\(K_h^{\mathbb T}\) is \(C^\infty\), periodic, even, and has unit mass on
\(\mathbb T^d\). Moreover, all of its derivatives are obtained by
periodizing derivatives of the Euclidean Gaussian, and the corresponding
series converge uniformly for each fixed \(h>0\). Hence
\(K_h^{\mathbb T}\in C^\infty(\mathbb T^d)\). Since convolution preserves
smoothness on the torus,
\[
\widetilde K_h=K_h^{\mathbb T}*_{\mathbb T}K_h^{\mathbb T}
\in C^\infty(\mathbb T^d).
\]
Because \(\mathbb T^d\) is compact, all derivatives of \(\widetilde K_h\)
are bounded. In particular,
\[
\nabla\widetilde K_h\in L^\infty(\mathbb T^d),
\qquad
D^2\widetilde K_h\in L^\infty(\mathbb T^d).
\]
Thus \(\widetilde K_h\) is globally Lipschitz, and
\(\nabla\widetilde K_h\) is globally Lipschitz. This proves (1) and (2).

We now prove the approximation statement. Let
\[
v(\bx)=\sum_{\bk\in\mathbb Z^d}\widehat v_{\bk}
e^{2\pi i\bk\cdot\bx}
\]
be the Fourier series of \(v\). The Fourier coefficients of the
periodized Gaussian are
\[
\widehat{K_h^{\mathbb T}}(\bk)
=
e^{-\pi^2h^2|\bk|^2},
\]
therefore,
\[
\widehat{\widetilde K_h}(\bk)
=
\widehat{K_h^{\mathbb T}}(\bk)^2
=
e^{-2\pi^2h^2|\bk|^2}.
\]
Since differentiation commutes with convolution on \(\mathbb T^d\),
\[
\reallywidehat{\left(\nabla v-\nabla(\widetilde K_h*_{\mathbb T}v)\right)}(\bk)=
2\pi i\bk
\left(1-e^{-2\pi^2h^2|\bk|^2}\right)
\widehat v_{\bk}.
\]
By Parseval's identity,
\[
\|\nabla v-\nabla(\widetilde K_h*_{\mathbb T}v)\|_{L^2}^2
=
\sum_{\bk\in\mathbb Z^d}
(2\pi)^2|\bk|^2
\left|1-e^{-2\pi^2h^2|\bk|^2}\right|^2
|\widehat v_{\bk}|^2 .
\]

If \(v\in H^1(\mathbb T^d)\), then
\[
\sum_{\bk\in\mathbb Z^d}
(2\pi)^2|\bk|^2|\widehat v_{\bk}|^2<\infty,
\]
so for each fixed \(\bk\), we have the following limit
\[
1-e^{-2\pi^2h^2|\bk|^2}\to0
\qquad\text{as }h\downarrow0.
\]
The coefficients are bounded above and below by:
\[0\le 1-e^{-2\pi^2h^2|\bk|^2}\le1.
\]
And applying the dominated convergence theorem therefore yields the limit
\[
\|\nabla v-\nabla(\widetilde K_h*_{\mathbb T}v)\|_{L^2(\mathbb T^d)}
\to0
\qquad\text{as }h\downarrow0.
\]
It remains to prove the quantitative estimate for \(v\in H^2(\mathbb T^d)\).
Using the elementary inequality
\[
1-e^{-t}\le C\sqrt t,
\qquad t\ge0,
\]
we obtain
\[
1-e^{-2\pi^2h^2|\bk|^2}
\le
Ch|\bk|.
\]
Hence
\[
\begin{aligned}
\|\nabla v-\nabla(\widetilde K_h*_{\mathbb T}v)\|_{L^2}^2
&=
\sum_{\bk\in\mathbb Z^d}
(2\pi)^2|\bk|^2
\left|1-e^{-2\pi^2h^2|\bk|^2}\right|^2
|\widehat v_{\bk}|^2  \\
&\le
Ch^2
\sum_{\bk\in\mathbb Z^d}
|\bk|^4|\widehat v_{\bk}|^2 .
\end{aligned}
\]
By the Fourier characterization of the Sobolev norm, we have
\[
\sum_{\bk\in\mathbb Z^d}
|\bk|^4|\widehat v_{\bk}|^2
\sim
\|D^2v\|_{L^2(\mathbb T^d)}^2,
\]
and thus we conclude that
\[
\|\nabla v-\nabla(\widetilde K_h*_{\mathbb T}v)\|_{L^2(\mathbb T^d)}
\le
Ch\|D^2v\|_{L^2(\mathbb T^d)},
\]
which completes the proof.
\end{proof}
\section{Proof of Proposition \ref{prop:well_possedness}}
\label{app:proof_well_possedness}
\begin{proof}
The proof is based on Theorem 1.7 in \cite{carmona2016lectures}. We only need to show that for all $\bx_1,\bx_2 \in \mathbb R^d $ and $\nu_1,\nu_2\in \mathcal P_2 (\mathbb R^d)$, 
\begin{align*}
\|&\bb_{\mathrm{model}}(\bx_1,\nu_1) - \lambda \nabla (\tilde{K}_h*\nu_1-K_h * \mu^\mathrm{obs})(\bx_1) \\
&\quad - (\bb_{\mathrm{model}}(\bx_2,\nu_2) - \lambda \nabla (\tilde{K}_h*\nu_2-K_h * \mu^\mathrm{obs})(\bx_2))\| \\ & \leq C(\|\bx_1-\bx_2\| + W_2(\nu_1,\nu_2)).
\end{align*}

By the Lipschitz property of $\bb_\mathrm{model}$ and triangle inequality, we have:
\begin{equation}
\label{equ:proof_eq}
\begin{aligned}
\|\bb_{\mathrm{model}}&(\bx_1,\nu_1) - \lambda \nabla (\tilde{K}_h*\nu_1-K_h * \mu^\mathrm{obs})(\bx_1) \\
&\quad - (\bb_{\mathrm{model}}(\bx_2,\nu_2) - \lambda \nabla (\tilde{K}_h*\nu_2-K_h * \mu^\mathrm{obs})(\bx_2))\| \\
\leq & c(\|\bx_1-\bx_2\|+ W_2(\nu_1,\nu_2)) + \lambda \|\nabla(\tilde{K}_h*\nu_1)(\bx_1)-\nabla(\tilde{K}_h*\nu_2)(\bx_1)\| + \\
&\lambda \|\nabla(\tilde{K}_h*\nu_2)(\bx_1)-\nabla(\tilde{K}_h*\nu_2)(\bx_2)\| \\
&\quad + \lambda \|\nabla(K_h*\mu^\mathrm{obs})(\bx_1)-\nabla(K_h*\mu^\mathrm{obs})(\bx_2)\|
\end{aligned}
\end{equation}
The difference is bounded by
\begin{align*}
& \|\nabla(\tilde{K}_h*\nu_1)(\bx_1)-\nabla(\tilde{K}_h*\nu_2)(\bx_1)\| \\ =& \left\|\int \nabla \tilde{K}_h(\bx_1-\by)\intd\nu_1(\by) -\int \nabla \tilde{K}_h (\bx_1-\by)\intd\nu_2(\by)\right\|\\
 =  & \left\|\int \nabla \tilde{K}_h(\bx_1-\by)\intd(\nu_1-\nu_2)(\by)\right\|\\
\leq & L W_1(\nu_1,\nu_2) \\
\leq & L W_2(\nu_1,\nu_2)
\end{align*}
using Kantorovich-Rubinstein duality and the Lipschitz continuity of $\nabla \tilde{K}_h$.

Using the global Lipschitz properties and that these measures are unit mass 
\begin{align*}
&\|\nabla(\tilde{K}_h*\nu_2)(\bx_1)-\nabla(\tilde{K}_h*\nu_2)(\bx_2)\| \\
=& \left\|\int \nabla \tilde{K}_h (\bx_1-\by)\intd\nu_2(\by)-\int\nabla\tilde{K}_h (\bx_2-\by)\intd\nu_2(\by)\right\|\\
\leq & \int \|\nabla\tilde{K}_h (\bx_1-\by)-\nabla\tilde{K}_h(\bx_2-\by)\|\intd\nu_2(\by)\\
\leq & L \|\bx_1-\bx_2\|
\end{align*}
We have that
\begin{align*}
&(K_h*\mu^\mathrm{obs})(\bx)\\
=& \int K_h(\bx-\bz)\mu^\mathrm{obs}(\bz)\intd\bz\\
=&\int \int K_h(\bx-\bz)K_h(\bz-\by)\mu_t(\intd\by)\intd\bz\\
=& (\tilde{K_h}*\mu_t)(\bx).
\end{align*}
So for the last term on the right-hand side of \eqref{equ:proof_eq}, 
\begin{align*}
&\|\nabla(K_h*\mu^\mathrm{obs})(\bx_1)-\nabla(K_h*\mu^\mathrm{obs})(\bx_2)\| \\
=& \|\nabla(\tilde{K}_h*\mu_t)(\bx_1)-\nabla(\tilde{K}_h*\mu_t)(\bx_2)\|\\
=& \left\|\int \nabla \tilde{K}_h(\bx_1-\by)\intd\mu_t(\by)-\int\nabla\tilde{K}_h(\bx_2-\by) \intd\mu_t(\by)\right\|\\
\leq& \int \|\nabla\tilde{K}_h(\bx_1-\by)-\nabla\tilde{K}_h(\bx_2-\by)\| \intd\mu_t(\by)\\
\leq& L \|\bx_1-\bx_2\|
\end{align*}
Simplifying \eqref{equ:proof_eq}, we obtain the desired result:
\begin{align*}
&\|\bb_{\mathrm{model}}(\bx_1,\nu_1) - \lambda \nabla (\tilde{K}_h*\nu_1-K_h * \mu^\mathrm{obs})(\bx_1) - \\&(\bb_{\mathrm{model}}(\bx_2,\nu_2) - \lambda \nabla (\tilde{K}_h*\nu_2-K_h * \mu^\mathrm{obs})(\bx_2))\| \\
\leq & c(\|\bx_1-\bx_2\|+ W_2(\nu_1,\nu_2)) + \lambda L W_2(\nu_1,\nu_2) +\lambda L \|\bx_1-\bx_2\| + \lambda L \|\bx_1-\bx_2\|\\
\leq & C(\|\bx_1 - \bx_2\| + W_2(\nu_1,\nu_2)),
\end{align*}
where $C:=2\lambda L + c$. Thus the modified drift inherits the required global Lipschitz continuity in both its spatial and measure arguments, which is the key condition needed for the well-posedness.
\end{proof}

\section{Proof of Proposition \ref{equ:propagation_of_chaos} }
\label{app:propagation_of_chaos}
\begin{proof}
We use a synchronous coupling argument, as in \cite{sznitman2006topics} and Theorem 3.1 of \cite{chaintron2021propagation}. We construct the $N$-particle system $(\bZ_t^{i,N})_{i=1}^N$ and a set of $N$ independent mean-field processes with the same Brownian motions and show that their $L^2$ distance vanishes as $N\to \infty$. We define $N$ independent processes $(\bar{\bZ}^{1,N}_t,\dots,\bar{\bZ}^{N,N}_t)$ defined as the solutions of $N$ SDEs:
\begin{equation*}
\intd \bar{\bZ}^{i,N}_t
= \bb_{\mathrm{model}}\left(\bar{\bZ}^{i,N}_t, f_t \right)\,\intd t-  \lambda\left(\nabla \tilde{K}_h*f_t(\bar{\bZ}^{i,N}_t)  -  \nabla   K_h *\mu^{\mathrm{obs}} (\bar{\bZ}^{i,N}_t)\right)\,\intd t  + \Sigma \,\intd \bW^i_t,
\end{equation*}
for $i\in\{1,\dots,N\}$, where $(\bW^i_t)$ is the same Brownian motion as in \eqref{equ:density_obs} and $f_t = \mathrm{Law}(\bar{\bZ}^{i,N}_t)$. We will show that:
\begin{equation}\label{equ:bound}
   \frac{1}{N}\sum_{i=1}^N \mathbb E\left[\sup_{t\leq T}\left|\bZ^{i,N}_t-\bar{\bZ}^{i,N}_t\right|^2\right] \leq \epsilon (N,T). 
\end{equation} 
We fix $i=1$ and define the path-space coupling
\[
\pi_N:=\mathrm{Law}\!\left( (\bZ_t^{1,N})_{t\in[0,T]},\,(\bar \bZ_t^{1,N})_{t\in[0,T]} \right).
\]
Its marginals are $f_{[0,T]}^{1,N}$ and $f_{[0,T]}$, respectively. By the definition of the 2-Wasserstein distance on path-space,
\[
\begin{aligned}
W_2^2\!\left(f_{[0,T]}^{1,N},f_{[0,T]}\right)
&\le
\int \sup_{0\le t\le T}\left|\bx_t-\by_t\right|^2\,\pi_N(\intd \bx,\intd \by) \\
&=
\mathbb{E}\!\left[\sup_{0\le t\le T}\left|\bZ_t^{1,N}-\bar \bZ_t^{1,N}\right|^2\right].
\end{aligned}
\]
Since the right-hand side of Equation \eqref{equ:bound} will tend to $0$ as $N\to\infty$, we can conclude that
\[
\lim_{N\to\infty} W_2\left(f_{[0,T]}^{1,N},f_{[0,T]}\right) = 0.
\]
Furthermore, by exchangeability, the same estimate holds for any fixed finite collection of particles, which yields $f_{[0,T]}$-chaoticity of the nudged particle system. 

By Ito's formula, and since the stochastic terms cancel due to the synchronous coupling, we have:
\begin{equation*}
\begin{aligned}
\left|\bZ^{i,N}_t - \bar{\bZ}^{i,N}_t\right|^2 = 
2 \int_0^t &\bigg\langle \bZ^{i,N}_s - \bar{\bZ}^{i,N}_s, 
\bb_\mathrm{model}(\bZ^{i,N}_s, \nu_s^N) - \bb_\mathrm{model}(\bar{\bZ}^{i,N}_s, f_s) - \\
&\lambda \Big(\frac{1}{N}\sum_{j=1}^N \nabla \tilde  K_h(\bZ^{i,N}_s - \bZ^{j,N}_s) - \nabla\tilde{K}_h * f_t(\bar{\bZ}^{i,N}_s)  \Big) +\\
& \lambda\Big(\nabla   K_h *\mu^{\mathrm{obs}} (\bZ^{i,N}_s) - \nabla   K_h *\mu^{\mathrm{obs}} (\bar{\bZ}^{i,N}_s)\Big)\bigg\rangle \intd s.
\end{aligned}
\end{equation*}
We take the supremum and then the expectation:
\begin{equation}
\label{equ:prop_chaos}
\begin{aligned}
\mathbb E&\left[\sup_{t\leq T}\left|\bZ^{i,N}_t-\bar{\bZ}^{i,N}_t\right|^2\right] \\
&= \mathbb E\Bigg[\sup_{t\leq T} \bigg| 2 \int_0^t \bigg\langle \bZ^{i,N}_s-\bar{\bZ}^{i,N}_s, \bb_\mathrm{model}(\bZ^{i,N}_s,\nu_s^N) - \bb_\mathrm{model}(\bar{\bZ}^{i,N}_s, f_s) - \\
& \lambda \Big(\frac{1}{N}\sum_{j=1}^N \nabla \tilde  K_h(\bZ^{i,N}_s - \bZ^{j,N}_s) - \nabla\tilde{K}_h*f_t(\bar{\bZ}^{i,N}_s)  \Big) \\
&\quad + \lambda\Big(\nabla   K_h *\mu^{\mathrm{obs}} (\bZ^{i,N}_s) - \nabla   K_h *\mu^{\mathrm{obs}} (\bar{\bZ}^{i,N}_s)\Big)\bigg\rangle \intd s \bigg| \Bigg] \\
\leq & 2 \int^T_0 \mathbb E \Bigg[ \bigg| \bigg\langle \bZ^{i,N}_s-\bar{\bZ}^{i,N}_s, \bb_\mathrm{model}(\bZ^{i,N}_s,\nu_s^N) - \bb_\mathrm{model}(\bar{\bZ}^{i,N}_s, f_s) -\\ & \lambda \Big(\frac{1}{N}\sum_{j=1}^N \nabla \tilde  K_h(\bZ^{i,N}_s - \bZ^{j,N}_s) - \nabla \tilde{K}_h *f_t(\bar{\bZ}^{i,N}_s)  \Big) \\
&\quad +\lambda\Big(\nabla   K_h *\mu^{\mathrm{obs}} (\bZ^{i,N}_s) - \nabla   K_h *\mu^{\mathrm{obs}} (\bar{\bZ}^{i,N}_s)\Big)\bigg\rangle\bigg|\Bigg] \intd s\\
\leq &\int^T_0 \mathbb E \left[\left|\bZ^{i,N}_s-\bar{\bZ}^{i,N}_s\right|^2\right] \intd s + \int^T_0 \mathbb E \Bigg[\bigg|\bb_\mathrm{model}(\bZ^{i,N}_s,\nu_s^N) - \bb_\mathrm{model}(\bar{\bZ}^{i,N}_s, f_s) -
\\&\lambda \Big(\frac{1}{N}\sum_{j=1}^N \nabla \tilde  K_h(\bZ^{i,N}_s - \bZ^{j,N}_s) - \nabla\tilde{K}_h *f_t(\bar{\bZ}^{i,N}_s) \Big) \\
&\quad +\lambda\Big(\nabla   K_h *\mu^{\mathrm{obs}} (\bZ^{i,N}_s) - \nabla   K_h *\mu^{\mathrm{obs}} (\bar{\bZ}^{i,N}_s)\Big)\bigg|^2\Bigg] \intd s \\
\leq &\int^T_0 \mathbb E \left[\sup_{r\leq s} \left|\bZ^{i,N}_r-\bar{\bZ}^{i,N}_r\right|^2\right] \intd s + \int^T_0 \mathbb E \Bigg[\bigg|\bb_\mathrm{model}(\bZ^{i,N}_s,\nu_s^N) - \bb_\mathrm{model}(\bar{\bZ}^{i,N}_s, f_s) -
\\&\lambda \Big(\frac{1}{N}\sum_{j=1}^N \nabla \tilde  K_h(\bZ^{i,N}_s - \bZ^{j,N}_s) - \nabla\tilde{K}_h*f_t(\bar{\bZ}^{i,N}_s)  \Big) \\
&\quad +\lambda\Big(\nabla   K_h *\mu^{\mathrm{obs}} (\bZ^{i,N}_s) - \nabla   K_h *\mu^{\mathrm{obs}} (\bar{\bZ}^{i,N}_s)\Big)\bigg|^2\Bigg] \intd s.
\end{aligned}
\end{equation}

In the second integral, we have that:
\begin{equation}
\label{equ:second_itegrand}
\begin{aligned}
\mathbb E &\Bigg[\bigg|\bb_\mathrm{model}(\bZ^{i,N}_s,\nu_s^N) - \bb_\mathrm{model}(\bar{\bZ}^{i,N}_s, f_s) \\
&\quad - \lambda \Big(\frac{1}{N}\sum_{j=1}^N \nabla \tilde  K_h(\bZ^{i,N}_s - \bZ^{j,N}_s) - \nabla\tilde{K}_h*f_t(\bar{\bZ}^{i,N}_s)  \Big) +\\&\lambda\Big(\nabla   K_h *\mu^{\mathrm{obs}} (\bZ^{i,N}_s) - \nabla   K_h *\mu^{\mathrm{obs}} (\bar{\bZ}^{i,N}_s)\Big)\bigg|^2\Bigg] \\
\leq & 3 \mathbb E \left[\left|\bb_\mathrm{model}(\bZ^{i,N}_s,\nu_s^N) - \bb_\mathrm{model}(\bar{\bZ}^{i,N}_s, f_s)\right|^2\right] \\
&\quad + 3 \mathbb E \left[\left|\lambda \Big(\frac{1}{N}\sum_{j=1}^N \nabla \tilde  K_h(\bZ^{i,N}_s - \bZ^{j,N}_s) - \nabla\tilde{K}_h*f_t(\bar{\bZ}^{i,N}_s)  \Big) \right|^2\right] + \\&
3\mathbb E \left[\left|\lambda\Big(\nabla   K_h *\mu^{\mathrm{obs}} (\bZ^{i,N}_s) - \nabla   K_h *\mu^{\mathrm{obs}} (\bar{\bZ}^{i,N}_s)\Big)\right|^2\right].
\end{aligned}
\end{equation}
For the first term on the right-hand side of \eqref{equ:second_itegrand}, we define $\bar{\nu}^N_s = \frac{1}{N}\sum_{j=1}^N \delta_{\bar{\bZ}^{j,N}_s}$ as the empirical measure of the system $(\bar{\bZ}^{1,N}_s,\dots,\bar{\bZ}^{N,N}_s)$ and use triangle inequality and the Lipschitz condition of $\bb_\textrm{model}$ to get:
The second term on the right-hand side of \eqref{equ:second_term} becomes:
We show that $\mathbb E \left[\left(W_2(\bar{\nu}^N_s,f_s)\right)^2\right] \to 0$. By Theorem 3 in \cite{varadarajan1958convergence}, the empirical measure $\bar{\nu}^N_s$ converges weakly to $f_s$. Furthermore, by Theorem 1.7 in \cite{carmona2016lectures}, $f_s$ has bounded second moment. We have convergence of the second moment by the strong law of large numbers. Hence, by Theorem 6.9 in \cite{villani2008optimal}, we have that $W_2(\bar{\nu}^N_s,f_s) \to 0$. If $\left(W_2(\bar{\nu}^N_s,f_s)\right)^2$ is also uniformly integrable, then by Vitali Convergence Theorem, $\mathbb E \left[\left(W_2(\bar{\nu}^N_s,f_s)\right)^2\right] \to 0$. $\left(W_2(\bar{\nu}^N_s,f_s)\right)^2$ is uniformly integrable if $\sup\limits_N\mathbb{E} \left[\left(W_2(\bar{\nu}^N_s,f_s)\right)^2 \mathbbm{1} _{\left(W_2(\bar{\nu}^N_s,f_s)\right)^2\geq R}\right] \to 0$  as $R\to \infty$. We have  
\begin{equation*}
\begin{aligned}
\left(W_2(\bar{\nu}^N_s,f_s)\right)^2 &= \inf_{\gamma\in\Gamma(\bar{\nu}^N_s, f_s)} \int |\bx-\by|^2 \intd \gamma(\bx,\by)\\
&\leq 2 \int |\bx|^2 \intd \bar{\nu}^N_s + 2 \int |\by|^2 \intd f_s,
\end{aligned}
\end{equation*}
so that
\begin{equation*}
\begin{aligned}
&\sup\limits_N\mathbb E \left[\left(W_2(\bar{\nu}^N_s,f_s)\right)^2 \mathbbm{1}_{\left(W_2(\bar{\nu}^N_s,f_s)\right)^2\geq R}\right] \\
\leq & 2 \sup\limits_N\int |\bx|^2 \intd \bar{\nu}^N_s \mathbb E \left[ \mathbbm{1}_{\left(W_2(\bar{\nu}^N_s,f_s)\right)^2\geq R}\right] + 2 \int |\by|^2 \intd f_s \sup\limits_N\mathbb E \left[\mathbbm{1}_{\left(W_2(\bar{\nu}^N_s,f_s)\right)^2\geq R}\right].
\end{aligned}
\end{equation*}
Since $\bar{\nu}^N_s$ and $f_s$ both have bounded second moments and $\sup\limits_N\mathbb E \left[\mathbbm{1}_{\left(W_2(\bar{\nu}^N_s,f_s)\right)^2\geq R}\right] \to 0$ because $W_2(\bar{\nu}^N_s , f_s) \to 0$, we have that $\sup\limits_N\mathbb E \left[\left(W_2(\bar{\nu}^N_s,f_s)\right)^2 \mathbbm{1}_{\left(W_2(\bar{\nu}^N_s,f_s)\right)^2\geq R}\right] \to 0$ as $R\to \infty$. 

Hence, we can conclude by Vitali Convergence Theorem that $\mathbb E \left[\left(W_2(\bar{\nu}^N_s,f_s)\right)^2\right] \to 0$.

For the second term on the right-hand side of \eqref{equ:second_itegrand}, we use triangle inequality to get:
\begin{equation}
\label{equ:second_term}
\begin{aligned}
& \mathbb E \left[\left|\lambda \Big(\frac{1}{N}\sum_{j=1}^N \nabla \tilde  K_h(\bZ^{i,N}_s - \bZ^{j,N}_s) - \nabla\tilde{K}_h*f_t(\bar{\bZ}^{i,N}_s)  \Big) \right|^2\right] \\
\leq& \mathbb E \left[\left|\lambda \Big(\frac{1}{N}\sum_{j=1}^N \nabla \tilde  K_h(\bZ^{i,N}_s - \bZ^{j,N}_s) - \frac{1}{N}\sum_{j=1}^N \nabla \tilde  K_h(\bar{\bZ}^{i,N}_s - \bar{\bZ}^{j,N}_s)  \Big) \right|^2\right] + \\&\mathbb E \left[\left|\lambda \Big(\frac{1}{N}\sum_{j=1}^N \nabla \tilde  K_h(\bar{\bZ}^{i,N}_s - \bar{\bZ}^{j,N}_s) - \nabla\tilde{K}_h*f_t(\bar{\bZ}^{i,N}_s)  \Big) \right|^2\right]
\end{aligned}
\end{equation}
The first term on the right-hand side of \eqref{equ:second_term} is simplified using triangle inequality and the Lipschitz property of $\nabla \tilde{K}_h$:
\begin{equation*}
\begin{aligned}
&\mathbb E \left[\left|\lambda \Big(\frac{1}{N}\sum_{j=1}^N \nabla \tilde  K_h(\bZ^{i,N}_s - \bZ^{j,N}_s) - \frac{1}{N}\sum_{j=1}^N \nabla \tilde  K_h(\bar{\bZ}^{i,N}_s - \bar{\bZ}^{j,N}_s)  \Big) \right|^2\right] \\
\leq & 2\lambda^2 \mathbb E \left[\left|\frac{1}{N}\sum_{j=1}^N \Big(\nabla \tilde  K_h(\bZ^{i,N}_s - \bZ^{j,N}_s) - \nabla \tilde  K_h(\bar{\bZ}^{i,N}_s - \bZ^{j,N}_s)  \Big) \right|^2\right] + \\&2\lambda^2 \mathbb E \left[\left|\frac{1}{N}\sum_{j=1}^N \Big(\nabla \tilde  K_h(\bar{\bZ}^{i,N}_s - \bZ^{j,N}_s) - \nabla \tilde  K_h(\bar{\bZ}^{i,N}_s - \bar{\bZ}^{j,N}_s)  \Big) \right|^2\right]\\
\leq & 2\lambda^2 \mathbb E \left[\frac{1}{N}\sum_{j=1}^N \left|\nabla \tilde  K_h(\bZ^{i,N}_s - \bZ^{j,N}_s) - \nabla \tilde  K_h(\bar{\bZ}^{i,N}_s - \bZ^{j,N}_s) \right|^2\right] + \\&2\lambda^2 \mathbb E \left[\frac{1}{N}\sum_{j=1}^N \left|\nabla \tilde  K_h(\bar{\bZ}^{i,N}_s - \bZ^{j,N}_s) - \nabla \tilde  K_h(\bar{\bZ}^{i,N}_s - \bar{\bZ}^{j,N}_s) \right|^2 \right]\\
\leq & 2\lambda^2 \mathbb E \left[\frac{1}{N}\sum_{j=1}^N L^2\left|\bZ^{i,N}_s - \bar{\bZ}^{i,N}_s\right|^2 \right] + 2\lambda^2 \mathbb E \left[\frac{1}{N} \sum_{j=1}^N L^2 \left|\bZ^{j,N}_s -\bar{\bZ}^{j,N}_t\right|^2\right]\\
\leq & 4\lambda^2 L^2 \mathbb E \left[\left|\bZ^{i,N}_s - \bar{\bZ}^{i,N}_s\right|^2\right].
\end{aligned}
\end{equation*}
The second term on the right-hand side of \eqref{equ:second_term} becomes:
\begin{equation*}
\begin{aligned}
&\mathbb E \left[\left|\lambda \Big(\frac{1}{N}\sum_{j=1}^N \nabla \tilde  K_h(\bar{\bZ}^{i,N}_s - \bar{\bZ}^{j,N}_s) - \nabla\tilde{K}_h*f_t(\bar{\bZ}^{i,N}_s)  \Big) \right|^2\right] \\= &\frac{\lambda^2}{N^2}\sum_{k,l=1}^N
\mathbb E \Biggl[\Big( \nabla \tilde  K_h(\bar{\bZ}^{i,N}_s - \bar{\bZ}^{k,N}_s) - \nabla\tilde{K}_h*f_t(\bar{\bZ}^{i,N}_s)  \Big) \\
&\quad \Big( \nabla \tilde  K_h(\bar{\bZ}^{i,N}_s - \bar{\bZ}^{l,N}_s) - \nabla\tilde{K}_h*f_t(\bar{\bZ}^{i,N}_s)  \Big)\Biggr]\\
\leq & \frac{\lambda^2}{N^2}\sum_{k\neq l}\mathbb E \Biggl[\Big( \nabla \tilde  K_h(\bar{\bZ}^{i,N}_s - \bar{\bZ}^{k,N}_s) - \nabla\tilde{K}_h*f_t(\bar{\bZ}^{i,N}_s)  \Big) \\
&\quad \Big( \nabla \tilde  K_h(\bar{\bZ}^{i,N}_s - \bar{\bZ}^{l,N}_s) - \nabla\tilde{K}_h*f_t(\bar{\bZ}^{i,N}_s)  \Big)\Biggr] + 4\frac{\lambda^2}{N} \|\nabla\tilde{K}_h\|_\infty^2\\
=& \frac{\lambda^2}{N^2}\sum_{k\neq l}\mathbb E \Biggl[\mathbb E \Biggl[\Big( \nabla \tilde  K_h(\bar{\bZ}^{i,N}_s - \bar{\bZ}^{k,N}_s) - \nabla\tilde{K}_h*f_t(\bar{\bZ}^{i,N}_s)  \Big) \\
&\quad \Big( \nabla \tilde  K_h(\bar{\bZ}^{i,N}_s - \bar{\bZ}^{l,N}_s) - \nabla\tilde{K}_h*f_t(\bar{\bZ}^{i,N}_s)  \Big)\Big| \bar{\bZ}^{i,N}_s\Biggr]\Biggr] + \\&4\frac{\lambda^2}{N} \|\nabla\tilde{K}_h\|_\infty^2\\
=& \frac{\lambda^2}{N^2}\sum_{k\neq l}\mathbb E \Biggl[\mathbb E \left[ \Big(\nabla \tilde  K_h(\bar{\bZ}^{i,N}_s - \bar{\bZ}^{k,N}_s) - \nabla\tilde{K}_h*f_t(\bar{\bZ}^{i,N}_s) \Big)\Big| \bar{\bZ}^{i,N}_s\right] \\
&\quad \mathbb E \left[ \Big(\nabla \tilde  K_h(\bar{\bZ}^{i,N}_s - \bar{\bZ}^{l,N}_s) - \nabla\tilde{K}_h*f_t(\bar{\bZ}^{i,N}_s) \Big)\Big| \bar{\bZ}^{i,N}_s\right]\Biggr] +\\
& 4\frac{\lambda^2}{N} \|\nabla\tilde{K}_h\|_\infty^2\\
= &4\frac{\lambda^2}{N} \|\nabla\tilde{K}_h\|_\infty^2,
\end{aligned}
\end{equation*}
where we used the law of total expectation and the fact that $\bar{\bZ}^{k,N}_s, \bar{\bZ}^{l,N}_s,$ are independent of $\bar{\bZ}^{i,N}_s$. The last equality is obtained by observing that at least one of $k,l$ is not equal to $i$; without loss of generality, assume $k\neq i$. Then since $\mathrm{Law}(\bar{\bZ}^{k,N}_s) = f_t$,
\begin{equation*}
\begin{aligned}
&\mathbb E \Bigl[ \Big(\nabla \tilde  K_h(\bar{\bZ}^{i,N}_s - \bar{\bZ}^{k,N}_s) \\
&\qquad - \nabla\tilde{K}_h*f_t(\bar{\bZ}^{i,N}_s) \Big)\Big| \bar{\bZ}^{i,N}_s = \bz\Bigr] \\
&= \mathbb E \left[\nabla\tilde{K}_h(\bz-\bar{\bZ}^{k,N}_s)\right] - \nabla \tilde{K}_h*f_t(\bz)\\
&= \int\nabla\tilde{K}_h(\bz-\by)\intd f_t(\by) - \int \nabla \tilde{K}_h(\bz-\by)\intd f_t(\by)
\\&=0
\end{aligned}
\end{equation*}
Hence, Equation \eqref{equ:second_term} simplifies to:
\begin{equation*}
\begin{aligned}
&\mathbb E \left[\left|\lambda \Big(\frac{1}{N}\sum_{j=1}^N \nabla \tilde  K_h(\bZ^{i,N}_s - \bZ^{j,N}_s) - \nabla\tilde{K}_h*f_t(\bar{\bZ}^{i,N}_s)  \Big) \right|^2\right] \\
&\quad \leq 4\lambda^2 L^2 \mathbb E \left[\left|\bZ^{i,N}_s - \bar{\bZ}^{i,N}_s\right|^2\right] + 4\frac{\lambda^2}{N} \|\nabla\tilde{K}_h\|_\infty^2.
\end{aligned}
\end{equation*}
For the third term on the right-hand side of \eqref{equ:second_itegrand}, 
\begin{equation*}
\begin{aligned}
&\mathbb E \Bigl[\Bigl|\lambda\Big(\nabla   K_h *\mu^{\mathrm{obs}} (\bZ^{i,N}_s) \\
&\qquad - \nabla   K_h *\mu^{\mathrm{obs}} (\bar{\bZ}^{i,N}_s)\Big)\Bigr|^2\Bigr] \\
&= \lambda^2\mathbb E \Bigl[\Bigl|\Big(\nabla \tilde{K}_h *\mu_t (\bZ^{i,N}_s) \\
&\qquad - \nabla \tilde{K}_h *\mu_t (\bar{\bZ}^{i,N}_s)\Big)\Bigr|^2\Bigr]\\
& = \lambda^2 \mathbb E \Bigl[\Bigl|\int \nabla \tilde{K}_h(\bZ^{i,N}_s-\by) \intd \mu_t(\by) \\
&\qquad - \int \nabla \tilde{K}_h (\bar{\bZ}^{i,N}_s - \by) \intd \mu_t(\by)\Bigr|^2\Bigr]\\
&\leq \lambda^2 \mathbb E \Bigl[\int \Bigl|\nabla \tilde{K}_h (\bZ^{i,N}_s-\by) \\
&\qquad - \nabla \tilde{K}_h (\bar{\bZ}^{i,N}_s - \by)\Bigr|^2 \intd \mu_t (\by)\Bigr] \\
&\leq \lambda^2 L^2 \mathbb E \left[ \left|\bZ^{i,N}_s - \bar{\bZ}^{i,N}_s\right|^2\right]
\end{aligned}
\end{equation*}
by the Lipschitz continuity of $\nabla \tilde{K}_h$.

Therefore, simplifying Equation \eqref{equ:second_itegrand}, we get:
\begin{equation*}
\begin{aligned}
\mathbb E &\Bigg[\bigg|\bb_\mathrm{model}(\bZ^{i,N}_s,\nu_s^N) - \bb_\mathrm{model}(\bar{\bZ}^{i,N}_s, f_s) \\
&\quad - \lambda \Big(\frac{1}{N}\sum_{j=1}^N \nabla \tilde  K_h(\bZ^{i,N}_s - \bZ^{j,N}_s) - \nabla \tilde{K}_h*f_t(\bar{\bZ}^{i,N}_s)  \Big) +\\&\lambda\Big(\nabla   K_h *\mu^{\mathrm{obs}} (\bZ^{i,N}_s) - \nabla   K_h *\mu^{\mathrm{obs}} (\bar{\bZ}^{i,N}_s)\Big)\bigg|^2\Bigg] \\
\leq & 24c^2 \mathbb E\left[\left|\bZ^{i,N}_s-\bar{\bZ}^{i,N}_s\right|^2\right] + 6c^2 \mathbb E \left[\left(W_2(\bar{\nu}^N_s,f_s)\right)^2\right] + 12\lambda^2 L^2 \mathbb E \left[\left|\bZ^{i,N}_s - \bar{\bZ}^{i,N}_s\right|^2\right] \\
&+ 12\frac{\lambda^2}{N} \|\nabla\tilde{K}_h\|_\infty^2 +
3 \lambda^2 L^2 \mathbb E \left[ \left|\bZ^{i,N}_s - \bar{\bZ}^{i,N}_s\right|^2\right]\\
\leq & (24c^2+ 15\lambda^2 L^2) \mathbb E \left[ \left|\bZ^{i,N}_s - \bar{\bZ}^{i,N}_s\right|^2\right] + 6c^2\mathbb E \left[\left(W_2(\bar{\nu}^N_s,f_s)\right)^2\right] + 12\frac{\lambda^2}{N} \|\nabla\tilde{K}_h\|_\infty^2.
\end{aligned}
\end{equation*}
Let $Y(t) = \mathbb E\left[\sup_{r\leq t}\left|\bZ^{i,N}_r-\bar{\bZ}^{i,N}_r\right|^2\right]$. Then Equation \eqref{equ:prop_chaos} becomes:
\begin{equation*}
\begin{aligned}
Y(T) &\leq \int^T_0 Y(s) \intd s + \int^T_0 (24c^2+ 15\lambda^2 L^2) \mathbb E \left[ \left|\bZ^{i,N}_s - \bar{\bZ}^{i,N}_s\right|^2\right] \\
&\quad + 6c^2\mathbb E \left[\left(W_2(\bar{\nu}^N_s,f_s)\right)^2\right]+ 12\frac{\lambda^2}{N} \|\nabla\tilde{K}_h\|_\infty^2\intd s\\
&= (1+24c^2+ 15\lambda^2 L^2) \int^T_0 Y(s) \intd s \\
&\quad + 6Tc^2\mathbb E \left[\left(W_2(\bar{\nu}^N_s,f_s)\right)^2\right] + 12\frac{\lambda^2}{N} \|\nabla\tilde{K}_h\|_\infty^2.
\end{aligned}
\end{equation*}
Thus,
\[
\begin{aligned}
    \mathbb E\left[\sup_{t\leq T}\left|\bZ^{i,N}_t-\bar{\bZ}^{i,N}_t\right|^2\right] \\
    &\leq (1+24c^2+ 15\lambda^2 L^2)\int_0^T \mathbb E\left[\sup_{t\leq s}\left|\bZ^{i,N}_t-\bar{\bZ}^{i,N}_t\right|^2\right]
    \intd s \\
    &\quad + 6Tc^2\mathbb E \left[\left(W_2(\bar{\nu}^N_s,f_s)\right)^2\right] + 12\frac{\lambda^2}{N} \|\nabla\tilde{K}_h\|_\infty^2.
\end{aligned}
\]
The conclusion follows by Gronwall's lemma and the fact that $\mathbb E \left[\left(W_2(\bar{\nu}^N_s,f_s)\right)^2\right] \to 0$.
\end{proof}
\section{Proof of Proposition~\ref{prop:positivity}}
\label{app:proof_positivity}
\begin{proof}
By Assumption~\ref{ass:regularity},
\[
\|\nabla\cdot \bb_{\mathrm{model}}(\cdot,\nu_t)\|_{L^\infty(\Omega)}
\le B_{\mathrm{model}},
\qquad \forall t\in[0,T].
\]
For the nudging term, using the identity
\[
\Delta(K_h*f)=(\Delta K_h)*f,
\]
we obtain
\[
\Delta\Bigl(K_h*(K_h*\nu_t-\mu_t^{\mathrm{obs}})\Bigr)
=(\Delta K_h)*(K_h*\nu_t-\mu_t^{\mathrm{obs}}).
\]
Since $K_h$ is Gaussian, Lemma~\ref{lemma:kernel} implies that $K_h$
is smooth, and in particular $\Delta K_h\in L^1(\Omega)$. By Young's
inequality,
\[
\Bigl\|\Delta\Bigl(K_h*(K_h*\nu_t-\mu_t^{\mathrm{obs}})\Bigr)\Bigr\|_{L^\infty(\Omega)}
\le
\|\Delta K_h\|_{L^1(\Omega)}
\|K_h*\nu_t-\mu_t^{\mathrm{obs}}\|_{L^\infty(\Omega)}.
\]
Therefore,
\[
\|\nabla\cdot c[\nu](\cdot,t)\|_{L^\infty(\Omega)}
\le C,
\qquad \forall t\in[0,T].
\]
We now rewrite the equation as
\[
\partial_t \nu-\nabla\cdot(A\nabla\nu)
+c[\nu]\cdot\nabla\nu+(\nabla\cdot c[\nu])\nu=0.
\]
Define $m(t):=\underline\nu_0 e^{-Ct}$.
Since $m$ is independent of $x$, we have $\nabla m=0$ and
$\nabla\cdot(A\nabla m)=0$, and thus
\[
\partial_t m-\nabla\cdot(A\nabla m)+c[\nu]\cdot\nabla m+(\nabla\cdot c[\nu])m
= m'(t)+(\nabla\cdot c[\nu])m
\le (-C+C)m=0.
\]
Hence $m$ is a subsolution of the same linear parabolic equation satisfied by
$\nu$. Since
\[
\nu(x,0)=\nu_0(x)\ge \underline\nu_0=m(0),
\qquad \forall x\in\Omega,
\]
the parabolic comparison principle yields
\[
\nu(x,t)\ge m(t)=\underline\nu_0 e^{-Ct},
\qquad \forall (x,t)\in\Omega\times[0,T].
\]
The conclusion follows.
\end{proof}

\section{Additional results for the one-dimensional linear benchmark}

We include two additional linear benchmark cases to test the behavior of the nudging correction under stronger mean-reversion bias. In both experiments, the reference dynamics use the same coefficient as in the main text, $a_{\rm true}=1$, while the biased forecast uses $a=2$ or $a=5$. Since these values are larger than the reference value, the forecast model pulls particles too strongly toward the empirical mean and therefore produces an overly concentrated distribution with a smaller variance than the true system.

\begin{figure}[htbp]
    \centering
    \includegraphics[width=0.65\textwidth]{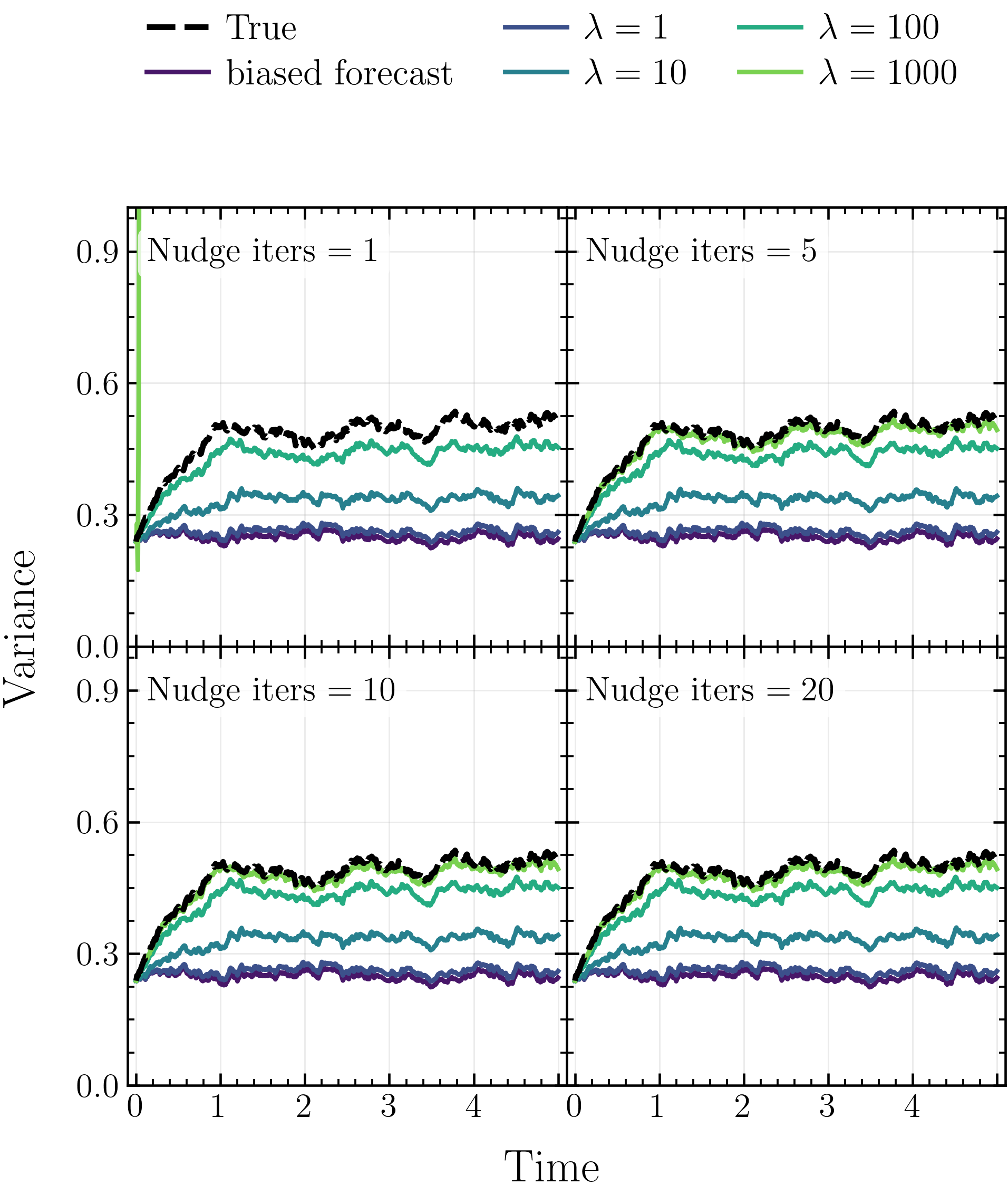}
    \caption{\textbf{Variance dynamics in the one-dimensional linear benchmark ($a=2$).} We compare the reference system, the biased forecast model, and assimilated (nudged) trajectories with $\lambda\in\{1,10,100,1000\}$. Increasing $\lambda$ generally improves tracking accuracy, but excessively large nudging may reduce numerical stability.}
    \label{fig:simple_linear_case_a_2}
\end{figure}

Figure~\ref{fig:simple_linear_case_a_2} shows the moderately over-interacting case $a=2$. The biased forecast remains below the reference variance, reflecting excessive contraction around the mean. The nudging correction increases the variance toward the reference curve, with stronger corrections giving better agreement. The improvement is already visible for intermediate values of $\lambda$, while the largest value can produce a short initial overshoot when the correction is applied with too few nudging substeps.

\begin{figure}[htbp]
    \centering
    \includegraphics[width=0.65\textwidth]{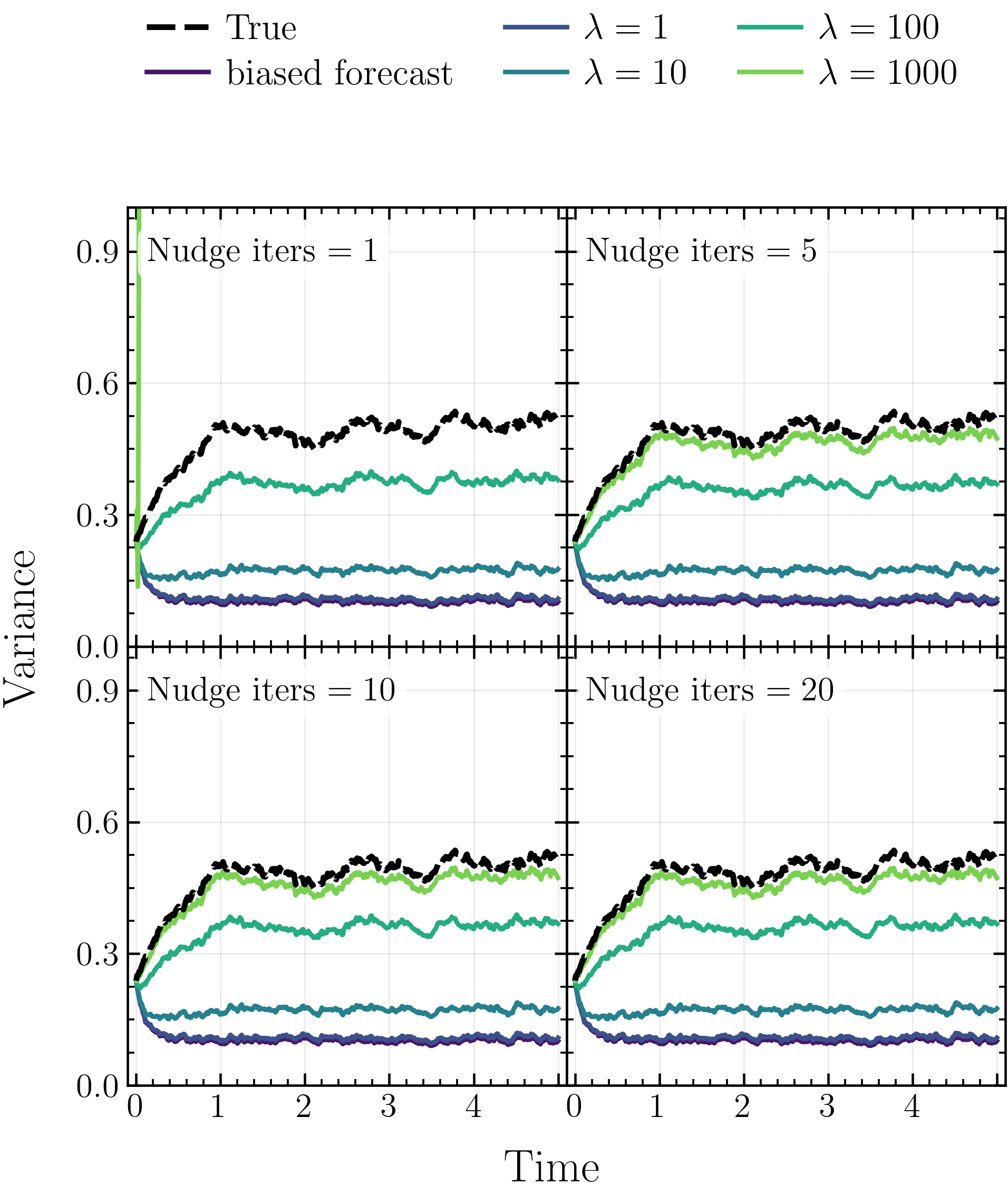}
    \caption{\textbf{Variance dynamics in the one-dimensional linear benchmark ($a=5$).} We compare the reference system, the biased forecast model, and assimilated (nudged) trajectories with $\lambda\in\{1,10,100,1000\}$. Increasing $\lambda$ generally improves tracking accuracy, but excessively large nudging may reduce numerical stability.}
    \label{fig:simple_linear_case_a_5}
\end{figure}

Figure~\ref{fig:simple_linear_case_a_5} considers a more strongly biased forecast model. In this case, the unassimilated trajectory is substantially over-concentrated, so a larger nudging strength is needed to recover the correct variance level. The qualitative trend is consistent with the $a=2$ case; increasing $\lambda$ reduces the variance mismatch, but aggressive nudging can introduce temporary numerical instability. These additional experiments support the robustness of the proposed correction mechanism for both moderate and severe over-interaction bias.

\section{Additional results for the multimodal benchmark}

We include additional experiments for the multimodal benchmark with biased interaction coefficients
$a=0.1$ and $a=0.5$. These two cases complement the main experiment with $a=1.5$ and test whether the proposed nudging correction remains effective when the forecast model is only moderately misspecified. The reference system uses $a_{\rm true}=0.25$, so the case $a=0.1$ corresponds to a weaker interaction than the reference dynamics, while $a=0.5$ corresponds to a stronger interaction.

\begin{figure}[htbp]
    \centering
    \includegraphics[width=0.65\textwidth]{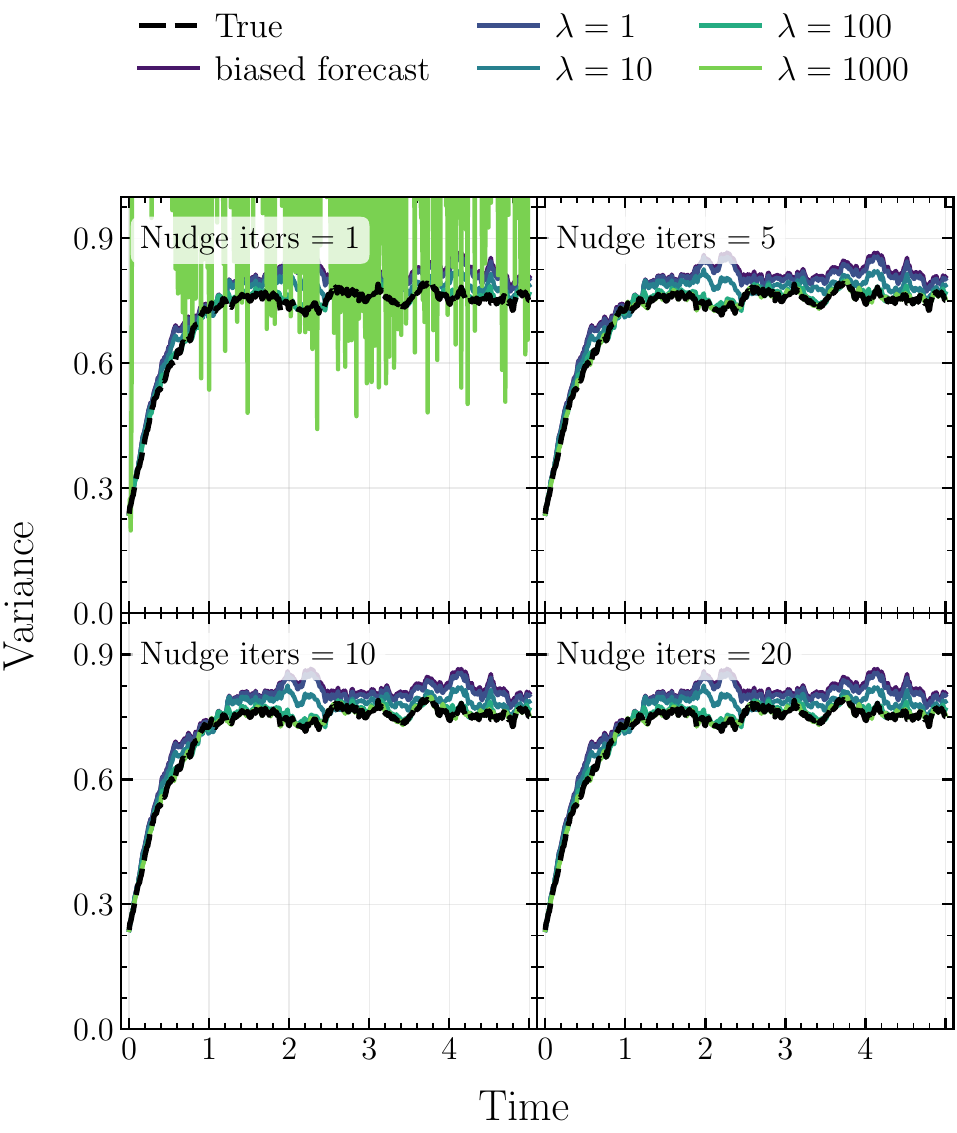}
    \caption{\textbf{Variance dynamics in the multimodal benchmark (\(a=0.1\)).} We compare the reference system, the biased forecast model, and assimilated (nudged) trajectories with \(\lambda\in\{1,10,100,1000\}\). The four panels correspond to different numbers of nudging iterations. Increasing \(\lambda\) improves tracking accuracy, while excessively large nudging can introduce temporary numerical instability when the correction is applied too aggressively.}
    \label{fig:case2_true_0.25_a_0.1}
\end{figure}

Figure~\ref{fig:case2_true_0.25_a_0.1} shows the case where the forecast interaction strength is smaller than the reference value. Since this bias is relatively mild, the unassimilated forecast already follows the reference variance more closely than in the more strongly biased case shown in the main text. Nevertheless, the nudging correction still yields better agreement with the reference trajectory. For moderate and large values of \(\lambda\), the assimilated variance remains close to the true variance after a short adjustment period. As in the previous experiments, using only one nudging iteration together with a large \(\lambda\) can create visible oscillations, reflecting the stability limitation of an explicit correction step.

\begin{figure}[htbp]
    \centering
    \includegraphics[width=0.65\textwidth]{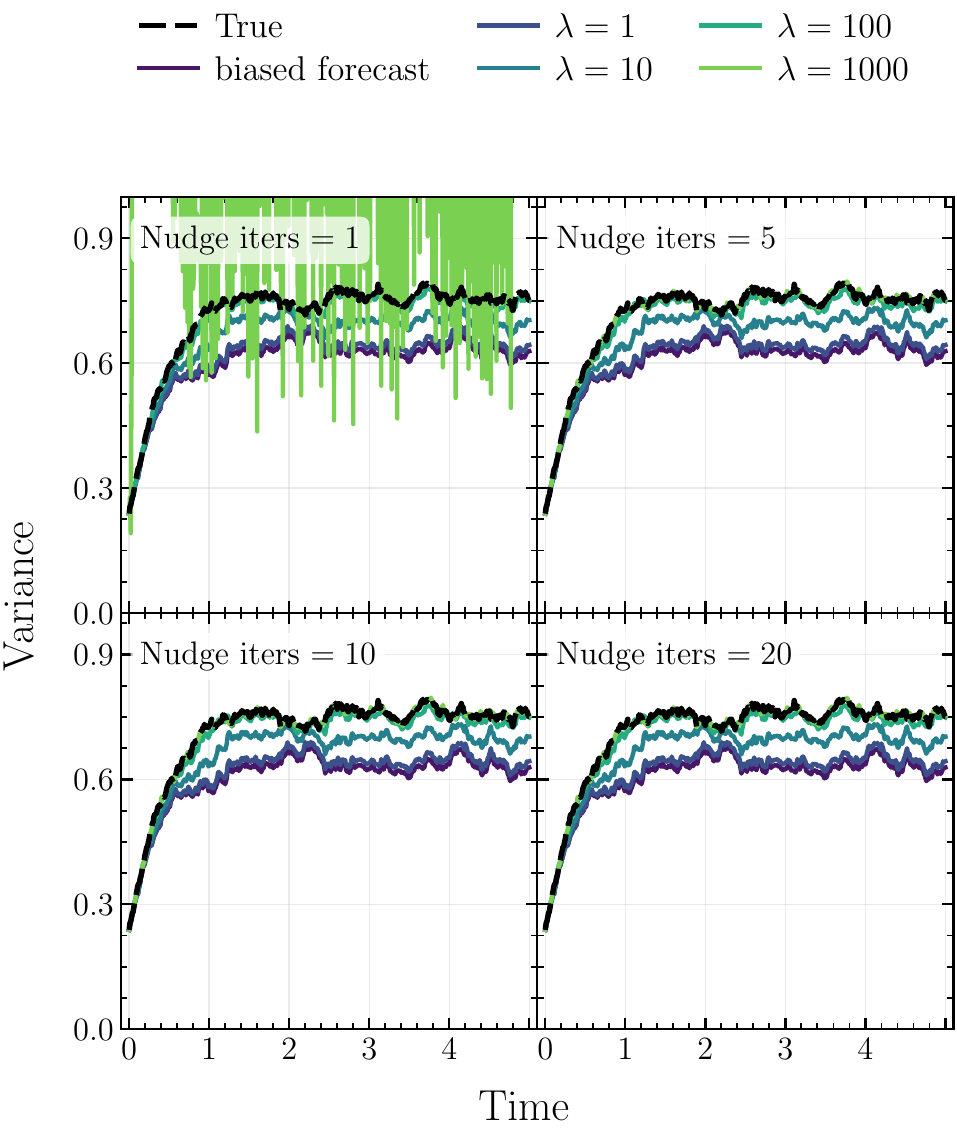}
    \caption{\textbf{Variance dynamics in the multimodal benchmark (\(a=0.5\)).} We compare the reference system, the biased forecast model, and assimilated (nudged) trajectories with \(\lambda\in\{1,10,100,1000\}\). The four panels correspond to different numbers of nudging iterations. Increasing \(\lambda\) improves tracking accuracy, while excessively large nudging can introduce temporary numerical instability when the correction is applied too aggressively.}
    \label{fig:case2_true_0.25_a_0.5}
\end{figure}

Figure~\ref{fig:case2_true_0.25_a_0.5} considers a forecast model with stronger interaction than the reference system. In this case, the biased model tends to contract the distribution too strongly toward the empirical mean, which leads to a variance mismatch. The nudging term corrects this error by pushing the forecast law toward the observed coarse density. Increasing \(\lambda\) generally reduces the variance gap, while the largest value again requires enough nudging substeps to avoid temporary numerical instability.

\begin{figure}[h]
    \centering
    \includegraphics[width=0.65\textwidth]{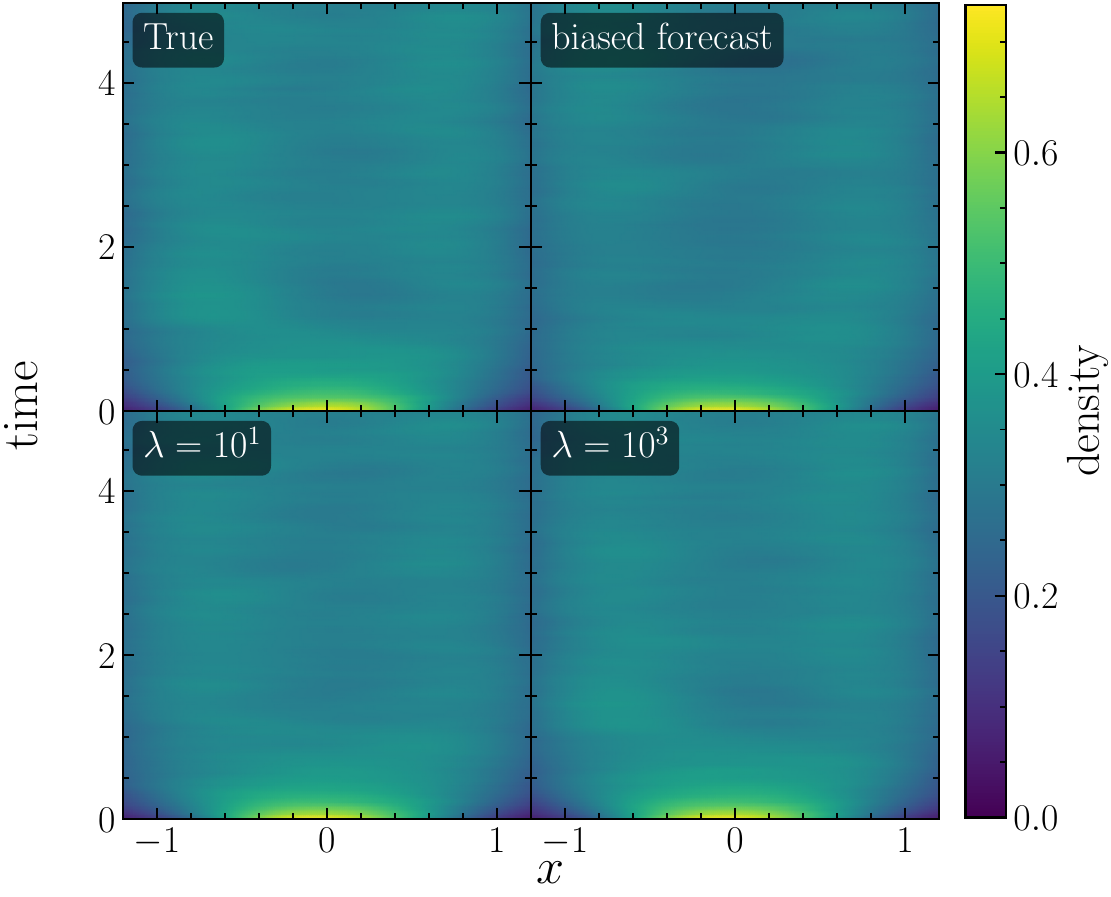}
    \caption{\textbf{Space-time density evolution in the multimodal benchmark (\(a=0.1\)).} Top-left: reference density. Top-right: biased forecast. Bottom-left: assimilated density with \(\lambda=10\). Bottom-right: assimilated density with \(\lambda=1000\). Larger nudging strength restores both the location and the spread of the true law.}
    \label{fig:case2_density_true_0.25_a_0.1}
\end{figure}

The corresponding space-time density in Figure~\ref{fig:case2_density_true_0.25_a_0.1} confirms that the improvement is not limited to the variance. The biased forecast captures the overall bimodal structure but exhibits visible discrepancies in the spread and relative concentration of the density. The assimilated solutions reduce these discrepancies, with the stronger correction giving a density evolution closer to the reference law. This shows that the measure-based feedback can correct the distribution at the level of the full density, not only at the level of a low-order statistic.

\begin{figure}[h]
    \centering
    \includegraphics[width=0.65\textwidth]{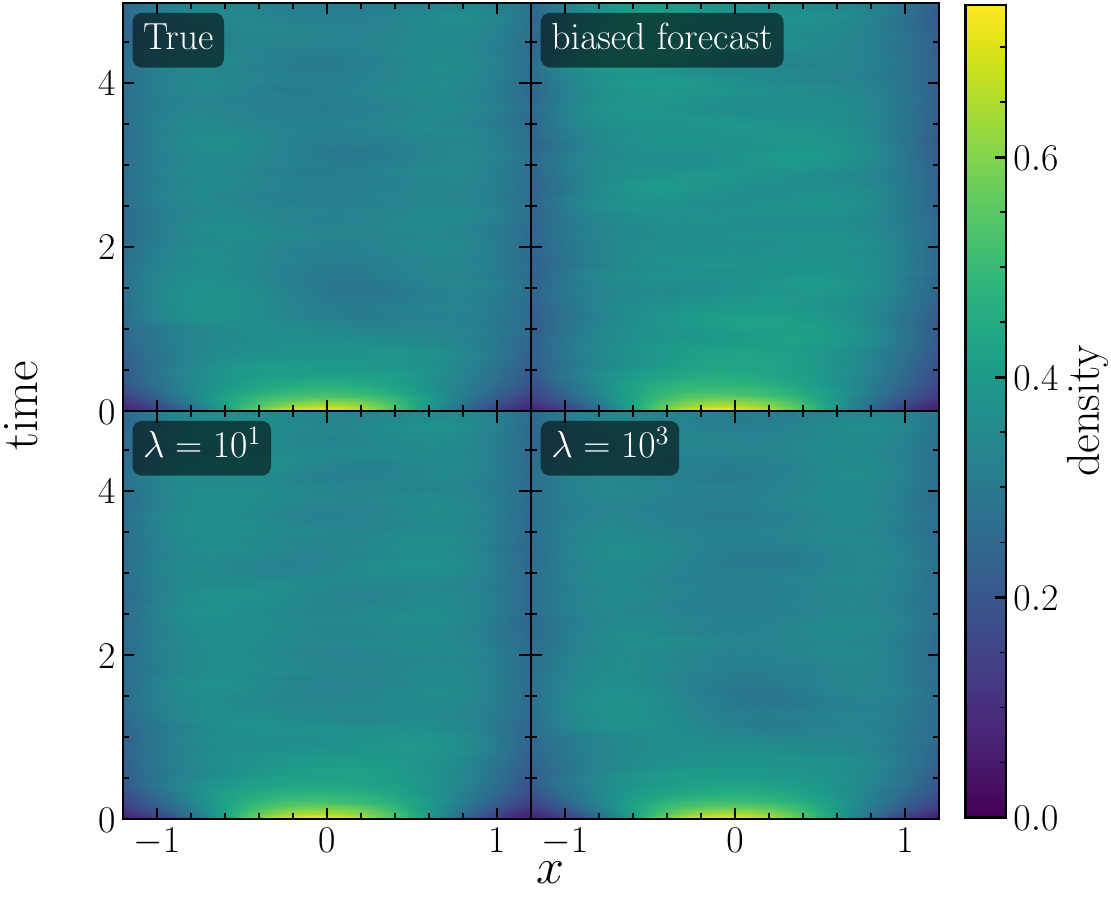}
    \caption{\textbf{Space-time density evolution in the multimodal benchmark (\(a=0.5\)).} Top-left: reference density. Top-right: biased forecast. Bottom-left: assimilated density with \(\lambda=10\). Bottom-right: assimilated density with \(\lambda=1000\). Larger nudging strength restores both the location and the spread of the true law.}
    \label{fig:case2_density_true_0.25_a_0.5}
\end{figure}

Figure~\ref{fig:case2_density_true_0.25_a_0.5} shows a similar trend for the stronger-interaction forecast model. The biased prediction has an incorrect density profile, because the interaction term disrupts the balance between concentration near the center and spreading toward the two wells. Nudging with \(\lambda=10\) partially corrects this mismatch, while \(\lambda=1000\) gives a closer reconstruction of the reference space-time density. Together, Figures~\ref{fig:case2_true_0.25_a_0.1}--\ref{fig:case2_density_true_0.25_a_0.5} indicate that the proposed correction remains effective for both weaker and stronger interaction bias in the multimodal setting.

\end{document}